\setheadfoot{\onelineskip}{2\onelineskip} % {headheight}{footskip}
\DeclareMathAlphabet{\mathpzc}{OT1}{pzc}{m}{it}
\DeclareFontFamily{U}{mathx}{\hyphenchar\font45}
\DeclareFontShape{U}{mathx}{m}{n}{
      <5> <6> <7> <8> <9> <10>
      <10.95> <12> <14.4> <17.28> <20.74> <24.88>
      mathx10
      }{}
\DeclareSymbolFont{mathx}{U}{mathx}{m}{n}
\DeclareMathAccent{\widecheck}{0}{mathx}{"71}
  \definecolor{darkblue}{rgb}{0,0,0.7}
\newif\ifpgfshaperectangleroundnortheast
\newif\ifpgfshaperectangleroundnorthwest
\newif\ifpgfshaperectangleroundsoutheast
\newif\ifpgfshaperectangleroundsouthwest
\def\pgf@sh@bg@rectangle{%
  \pgfkeysgetvalue{/pgf/outer xsep}{\outerxsep}%
  \pgfkeysgetvalue{/pgf/outer ysep}{\outerysep}%
  \pgfpathmoveto{\pgfpointadd{\southwest}{\pgfpoint{\outerxsep}{\outerysep}}}%
  {\ifpgfshaperectangleroundnorthwest\else\pgfsetcornersarced{\pgfpointorigin}\fi%
    \pgfpathlineto{\pgfpointadd{\southwest\pgf@xa=\pgf@x\northeast\pgf@x=\pgf@xa}{\pgfpoint{\outerxsep}{-\outerysep}}}}%  
  {\ifpgfshaperectangleroundnortheast\else\pgfsetcornersarced{\pgfpointorigin}\fi%  
    \pgfpathlineto{\pgfpointadd{\northeast}{\pgfpoint{-\outerxsep}{-\outerysep}}}}%
  {\ifpgfshaperectangleroundsoutheast\else\pgfsetcornersarced{\pgfpointorigin}\fi%  
    \pgfpathlineto{\pgfpointadd{\southwest\pgf@ya=\pgf@y\northeast\pgf@y=\pgf@ya}{\pgfpoint{-\outerxsep}{\outerysep}}}}%
  {\ifpgfshaperectangleroundsouthwest\else\pgfsetcornersarced{\pgfpointorigin}\fi%  
    \pgfpathclose}}
\tikzset{
  	WD/.style={%everything after equals replaces "oriented WD" in key.
  	  label/.style={
      	font=\everymath\expandafter{\the\everymath\scriptstyle},
        inner sep=0pt,
        node distance=2pt and -2pt},
      semithick,
      node distance=\bbx and \bby,
      decoration={markings, mark=at position \stringdecpos with \stringdec},
    	bb port length=0,
  	  bb port sep=.5,
	 	  bbx = .4cm,
		  bb min width=.4cm,
	    bby = 2ex,
	    bb penetrate=0,
	    bb rounded corners=2pt,
	    dot size=3pt,
      pack size = 16pt,
    	penetration = 0pt,
      link size = 2pt,
      pack color = blue,
      surround sep=2pt,
      ar/.style={postaction={decorate}},
  		execute at begin picture={\tikzset{
  			x=\bbx, y=\bby, 
				circuit logic US, tiny circuit symbols
				}
			}
    },
    bbx/.store in=\bbx,
    bby/.store in=\bby,
    bb port sep/.store in=\bbportsep,
    bb port length/.store in=\bbportlen,
    bb penetrate/.store in=\bbpenetrate,
    bb min width/.store in=\bbminwidth,
    bb rounded corners/.store in=\bbcorners,
    bb/.code 2 args={%When you see this key, run the code below:
    	\pgfmathsetlengthmacro{\bbheight}{\bbportsep * (max(#1,#2)+1) * \bby}
      \pgfkeysalso{draw,minimum height=\bbheight,minimum
       width=\bbminwidth,outer sep=0pt,
         rounded corners=\bbcorners,thick,
         prefix after command={\pgfextra{\let\fixname\tikzlastnode}},
         append after command={\pgfextra{\draw
            \ifnum #1=0{} \else foreach \i in {1,...,#1} {
            	($(\fixname.north west)!{(2*\i-1)/(2*#1)}!(\fixname.south west)$) +(-\bbportlen,0) coordinate (\fixname_in\i) -- +(\bbpenetrate,0) coordinate (\fixname_in\i')}\fi 
  					%Define the endpoints of tickmarks
            \ifnum #2=0{} \else foreach \i in {1,...,#2} {
            	($(\fixname.north east)!{(2*\i-1)/(2*#2)}!(\fixname.south east)$) +(-
\bbpenetrate,0) coordinate (\fixname_out\i') -- +(\bbportlen,0) coordinate (\fixname_out\i)}\fi;
           }}}
		},
	dot size/.store in=\dotsize,
	dot/.style={
		circle, draw, thick, inner sep=0, fill=black, minimum width=\dotsize
	},
	pack size/.store in=\psize,
	penetration/.store in=\penetration,
  spacing/.store in=\spacing,
  link size/.store in=\lsize,
  pack color/.store in=\pcolor,
 	pack inside color/.store in=\picolor,
  pack inside color=blue!20,
 	pack outside color/.store in=\pocolor,
  pack outside color=blue!50!black,
 	surround sep/.store in=\ssep,
 	link/.style={
  	circle, 
  	draw=black, 
  	fill=black,
  	inner sep=0pt, 
 		minimum size=\lsize
 	},
  pack/.style={
 		circle, 
 		draw = \pocolor, 
  	fill = \picolor,
  	minimum size = \psize
  },
  func/.style={
  	pack,
		rectangle,
		rounded corners=.5*\psize,
		inner ysep=.125*\psize,
		minimum width=1.125*\psize,
		inner xsep=.25*\psize,
  },
  funcr/.style={
    func,
    rectangle round north west=false, 
		rectangle round south west=false,
  },
  funcl/.style={
    func,
		rectangle round north east=false, 
		rectangle round south east=false,
  },
  funcu/.style={
    func,
		rectangle round south east=false, 
		rectangle round south west=false,
  },
  funcd/.style={
    func,
		rectangle round north east=false, 
		rectangle round north west=false,
  },
  outer pack/.style={
 		ellipse, 
 		draw,
  	inner sep=\ssep,
  	color=gray,
 	},
  intermediate pack/.style={
 		ellipse,
 		dashed, 
  	draw,
  	inner sep=\ssep,
 		color=\pocolor,
 	},
 }
\tikzset{light gray nodes/.style={every node/.style={fill=gray!40}}}
\tikzset{
	oriented WD/.style={%everything after equals replaces "oriented WD" in key.
		every to/.style={out=0,in=180,draw},
    label/.style={
    	font=\everymath\expandafter{\the\everymath\scriptstyle},
      inner sep=0pt,
      node distance=2pt and -2pt},
    semithick,
    node distance=1 and 1,
    decoration={markings, mark=at position \stringdecpos with \stringdec},
    ar/.style={postaction={decorate}},
    execute at begin picture={\tikzset{
    	x=\bbx, y=\bby,
      every fit/.style={inner xsep=\bbx, inner ysep=\bby}}}
    },
    string decoration/.store in=\stringdec,
    string decoration={\arrow{stealth};},
    string decoration pos/.store in=\stringdecpos,
    string decoration pos=.7,
    bbx/.store in=\bbx,
    bbx = 1.5cm,
    bby/.store in=\bby,
    bby = 1.5ex,
    bb port sep/.store in=\bbportsep,
    bb port sep=1.5,
    % bb wire sep/.store in=\bbwiresep,
    % bb wire sep=1.75ex,
    bb port length/.store in=\bbportlen,
    bb port length=4pt,
    bb penetrate/.store in=\bbpenetrate,
    bb penetrate=0,
    bb min width/.store in=\bbminwidth,
    bb min width=1cm,
    bb rounded corners/.store in=\bbcorners,
    bb rounded corners=2pt,
    bb spider/.style={
    	bb port sep=1, bb port length=10pt, bbx=.4cm, bb min width=.4cm, bby=.8ex},
    bb small/.style={
    	bb port sep=1, bb port length=2.5pt, bbx=.4cm, bb min width=.4cm, bby=.7ex},
		bb medium/.style={
			bb port sep=1, bb port length=2.5pt, bbx=.4cm, bb min width=.4cm, bby=.9ex},
    bb/.code 2 args={%When you see this key, run the code below:
    	\pgfmathsetlengthmacro{\bbheight}{\bbportsep * (max(#1,#2)+1) * \bby}
      \pgfkeysalso{draw,minimum height=\bbheight,minimum
       width=\bbminwidth,outer sep=0pt,
         rounded corners=\bbcorners,thick,
         prefix after command={\pgfextra{\let\fixname\tikzlastnode}},
         append after command={\pgfextra{\draw
            \ifnum #1=0{} \else foreach \i in {1,...,#1} {
            	($(\fixname.north west)!{(2*\i-1)/(2*#1)}!(\fixname.south west)$) +(-\bbportlen,0) coordinate (\fixname_in\i) -- +(\bbpenetrate,0) coordinate (\fixname_in\i')}\fi 
  					%Define the endpoints of tickmarks
            \ifnum #2=0{} \else foreach \i in {1,...,#2} {
            	($(\fixname.north east)!{(2*\i-1)/(2*#2)}!(\fixname.south east)$) +(-
\bbpenetrate,0) coordinate (\fixname_out\i') -- +(\bbportlen,0) coordinate (\fixname_out\i)}\fi;
           }}}
		},
		bb name/.style={
    	append after command={
				\pgfextra{\node[anchor=north] at (\fixname.north) {#1};}
			}
		},
  }
\tikzset{
	unoriented WD/.style={
  	every to/.style={draw},
  	shorten <=-\penetration, shorten >=-\penetration,
  	label distance=-2pt,
  	thick,
  	node distance=\spacing,
  	execute at begin picture={\tikzset{
  		x=\spacing, y=\spacing, circuit logic US, tiny circuit symbols}
		}
  },
  pack size/.store in=\psize,
  pack size = 12pt,
	penetration/.store in=\penetration,
	penetration = 0pt,
  spacing/.store in=\spacing,
  spacing = 8pt,
  link size/.store in=\lsize,
  link size = 2pt,
  pack color/.store in=\pcolor,
  pack color = blue,
 	pack inside color/.store in=\picolor,
  pack inside color=blue!20,
 	pack outside color/.store in=\pocolor,
  pack outside color=blue!50!black,
 	surround sep/.store in=\ssep,
  surround sep=8pt,
 	link/.style={
  	circle, 
  	draw=black, 
  	fill=black,
  	inner sep=0pt, 
 		minimum size=\lsize
 	},
  pack/.style={
 		circle, 
 		draw = \pocolor, 
  	fill = \picolor,
  	minimum size = \psize
  },
  func/.style={
  	pack,
		rectangle,
		rounded corners=.5*\psize,
		inner ysep=.125*\psize,
		minimum width=1.125*\psize,
		inner xsep=.25*\psize,
  },
  funcr/.style={
    func,
    rectangle round north west=false, 
		rectangle round south west=false,
  },
  funcl/.style={
    func,
		rectangle round north east=false, 
		rectangle round south east=false,
  },
  funcu/.style={
    func,
		rectangle round south east=false, 
		rectangle round south west=false,
  },
  funcd/.style={
    func,
		rectangle round north east=false, 
		rectangle round north west=false,
  },
  outer pack/.style={
 		ellipse, 
 		draw,
  	inner sep=\ssep,
  	color=gray,
 	},
  intermediate pack/.style={
 		ellipse,
 		dashed, 
  	draw,
  	inner sep=\ssep,
 		color=\pocolor,
 	},
}
\tikzset{
	spider diagram/.style={
		every to/.style={out=0, in=180, draw, thick},
		thick,
		dot size = 5pt,
		execute at begin picture={\tikzset{
    	x=\leglen, y=\leglen/3}}
	},
	dot size/.store in=\dotsize,
	dot fill/.store in=\dotfill,
	dot fill = black,
	leg length/.store in=\leglen,
	leg length = 15pt,
	baby/.style={dot size = 2pt, leg length = 6pt},
	young/.style={dot size = 3pt, leg length = 10pt},
	adolescent/.style={dot size = 4pt, leg length = 12pt},
	special spider/.code n args={4}{
		\pgfkeysalso{circle, draw, thick, inner sep=0, fill=\dotfill, minimum width=\dotsize,
  		prefix after command={\pgfextra{\let\fixname\tikzlastnode}},
  		append after command={\pgfextra{
  			\ifnum #1=0{} \else {\foreach \i in {1,...,#1} {
					\tikzmath{\anglei={-90*(#1+1-2*\i)/#1};}
  				\draw [thick]
						(\fixname) .. controls 
						($(\fixname.center)-(\anglei:#3/3)$) and ($(\fixname.center)-(\anglei:#3*2/3)$) .. 
						({$(\fixname.center)-(\anglei:#3*2/3)$}-|{$(\fixname.center)-(#3,0)$}) coordinate (\fixname_in\i);
  			}}\fi
  			\ifnum #2=0{} \else {\foreach \i in {1,...,#2} {
					\tikzmath{\anglei={90*(#2+1-2*\i)/#2};}
  				\draw [thick]
						(\fixname.center) .. controls 
						($(\fixname.center)+(\anglei:#4/3)$) and ($(\fixname.center)+(\anglei:#4*2/3)$) .. 
						({$(\fixname.center)+(\anglei:#4*2/3)$}-|{$(\fixname.center)+(#4,0)$}) coordinate (\fixname_out\i);
  			}}\fi
  		}}
		}
	},
	spider/.code 2 args={
		\pgfkeysalso{special spider={#1}{#2}{\leglen}{\leglen}}
	}
}
\tikzset{
	inner WD/.style={
		every to/.style={out=0, in=180, draw, thick},
		unoriented WD, 
		surround sep=2pt, 
		font=\tiny, 
		anchor=center
	}
}
\tikzset{
  function/.style={->, thin, shorten <=4pt, shorten >=4pt}
}
\tikzset{
  tick/.style={
  	postaction={
    	decorate,
      decoration={
      	markings, mark=at position 0.5 with {
					\draw[-] (0,.4ex) -- (0,-.4ex);
				}
			}
		}
	}
}
  \setlist{noitemsep, nolistsep}
	\setlist[description]{leftmargin=0em, itemindent=2em}
\theoremstyle{plain}
\newtheorem*{theorem*}{Theorem} 
\newtheorem{theorem}{Theorem}[chapter] 
\newtheorem{proposition}[theorem]{Proposition}
\newtheorem*{proposition*}{Proposition}
\newtheorem{corollary}[theorem]{Corollary}
\newtheorem{lemma}[theorem]{Lemma}
\theoremstyle{definition}
\newtheorem{definition}[theorem]{Definition}
\newtheorem*{axiom*}{Axiom}
\theoremstyle{remark}
\newtheorem{example}[theorem]{Example}
\newtheorem{remark}[theorem]{Remark}
\newcommand{\Set}[1]{\mathrm{#1}}%a named set
\newcommand{\ord}[1]{\underline{#1}}%a natural number, considered as a finite set
\newcommand{\cat}[1]{\mathcal{#1}}%a generic category
\newcommand{\ccat}[1]{\mathbb{#1}}%a generic category
\newcommand{\Cat}[1]{{\mathsf{#1}}}%a named category
\newcommand{\CCat}[1]{\mathbb{\StrLeft{#1}{1}}\Cat{\StrGobbleLeft{#1}{1}}}%a named category; does not seem to work in section headers...
\DeclareMathOperator{\ob}{\Set{Ob}}
\DeclareMathOperator{\id}{id}
\DeclareMathOperator{\Hom}{Hom}
\DeclareMathOperator{\inc}{inc}
\DeclarePairedDelimiter{\pair}{\langle}{\rangle}
\DeclarePairedDelimiter{\copair}{[}{]}
\newcommand{\tn}[1]{\textnormal{#1}}
\newcommand{\op}{^{\tn{op}}}
\newcommand{\tpow}[1]{^{\otimes #1}}
\newcommand{\strict}[1]{\overline{#1}}
\newcommand{\finset}{\Cat{FinSet}}
\newcommand{\bij}{\bb}
\newcommand{\smset}{\Cat{Set}}
\newcommand{\smf}{\Cat{SMF}}
\newcommand{\ssmc}{\CCat{SMC}}
\newcommand{\bb}{\mathbb{B}} %Not sure what old \aa did
\newcommand{\dd}{\mathbb{D}}
\newcommand{\nn}{\mathbb{N}}
\newcommand{\pp}{\mathbb{P}}
\newcommand{\qq}{\mathbb{Q}}
\newcommand{\mob}[1]{#1_0}
\newcommand{\rel}{\Cat{Rel}}
\newcommand{\cospan}{\Cat{Cospan}}
\newcommand{\zero}{\cat{I}}
\newcommand{\Ldots}[1]{\overset{#1}{\ldots}}
\newcommand{\Cdots}[1]{\overset{#1}{\cdots}}
\newcommand{\cp}{\mathbin{\fatsemi}}
\newcommand{\To}[1]{\xrightarrow{#1}}
\newcommand{\Too}[1]{\To{\;\;#1\;\;}}
\newcommand{\from}{\leftarrow}
\renewcommand{\ss}{\subseteq}
\newcommand{\qqand}{\qquad\text{and}\qquad}
\newcommand{\hide}[2][]{#1}
\begin{document}   

\title{Supplying bells and whistles in\\symmetric monoidal categories}
\author{Brendan Fong \and David I.\ Spivak}
\date{\vspace{-.3in}}
  
\maketitle

%============ Abstract ============%
\begin{abstract}
It is common to encounter symmetric monoidal categories $\cat{C}$ for which every object is equipped with an algebraic structure, in a way that is compatible with the monoidal product and unit in $\cat{C}$. We define this formally and say that $\cat{C}$ \emph{supplies} the algebraic structure. For example, the category $\rel$ of relations between sets has monoidal structures given by both cartesian product and disjoint union, and with respect to either one it supplies comonoids. We prove several facts about the notion of supply, e.g.\ that the associators, unitors, and braiding of $\cat{C}$ are automatically homomorphisms for any supply, as are the coherence isomorphisms for any strong symmetric monoidal functor that preserve supplies. We also show that any supply of structure in a symmetric monoidal category can be extended to a supply of that structure on its strictification.

%Along the way, we also prove that the 2-category of SM categories, strong SM functors, and monoidal natural transformations has biproducts, a fact which does not appear to be well-known and which we had difficulty finding in the literature.
\end{abstract}

%======== Chapter ========%
\chapter{Introduction}

Many symmetric monoidal categories $\cat{C}$ have the property that each object $c\in\cat{C}$ is equipped with a certain algebraic structure---say that of a monoid or a comonoid---in a way that is compatible with $\cat{C}$'s monoidal structure.

For example, consider the category $\rel$ of relations between sets. It has a symmetric monoidal structure $(I, \otimes, \gamma)$ coming from the cartesian monoidal structure of $\smset$. This is not a cartesian monoidal structure on $\rel$: indeed, $I$ is not terminal. And yet each object $r\in\rel$ is equipped with morphisms $\delta_r\colon r\to r\otimes r$ and $\epsilon_r\colon r\to I$, which satisfy the same \emph{algebraic} properties that a diagonal and a terminal morphism do. Namely, the diagrams expressing commutativity, unitality, and associativity commute:
\[
\begin{tikzcd}[column sep=small]
	&r\ar[dl, "\delta"']\ar[dr, "\delta"]\\
	r\otimes r\ar[rr, "\gamma"']&&
	r\otimes r
\end{tikzcd}
\qquad
\begin{tikzcd}[column sep=small]
	r\ar[rr, equal]\ar[dr, "\delta"']&&
	r\\&
	r\otimes r\ar[ur, "\epsilon\otimes r"']
\end{tikzcd}
\qquad
\begin{tikzcd}
	r\ar[r, "\delta"]\ar[d, "\delta"']&
	r\otimes r\ar[d, "r\otimes\delta"]\\
	r\otimes r\ar[r, "\delta\otimes r"']&
	r\otimes r\otimes r
\end{tikzcd}
\]
In string diagrams, the maps $\delta$ and $\epsilon$ can be drawn as:
\[
\begin{tikzpicture}[WD]
	\node[bb={1}{0}] (eta') {$\epsilon$};
	\draw (eta'_in1) to +(-.8,0);
	\node[bb={1}{2}, right=4 of eta'] (mu') {$\delta$};
	\draw (mu'_in1) -- +(-.8,0);
	\draw (mu'_out1) -- +(.8,0);
	\draw (mu'_out2) -- +(.8,0);
\end{tikzpicture}
\]
and the equations can be drawn as:
\[
\begin{tikzpicture}
	\node (Q11) {
	\begin{tikzpicture}[WD]
		\node[bb={1}{2}] (a) {};
		\coordinate (a1) at ($(a_out1)+(1,0)$);
		\coordinate (a2) at ($(a_out2)+(1,0)$);
		\draw (a_out2) to[out=0, in=180] (a1);
		\draw (a_out1) to[out=0, in=180] (a2);
		\draw (a_in1) -- +(-.5,0);
	\end{tikzpicture}
	};
	\node (Q12) [right=.8 of Q11] {
	\begin{tikzpicture}[WD]
		\node[bb={1}{2}] (a) {};
		\draw (a_out1) -- +(.5,0);
		\draw (a_out2) -- +(.5,0);
		\draw (a_in1) -- +(-.5,0);
	\end{tikzpicture}
	};
	\node[label=above:{\tiny commutative}] at ($(Q11.east)!.5!(Q12.west)$) {$=$};
	\node (Q21) [right=1 of Q12] {
  \begin{tikzpicture}[WD]
  	\node[bb={1}{2}] (a1) {};
  	\node[bb={1}{0}, right=.5 of a1_out1] (a2) {};
  	\draw (a2_in1) -- (a1_out1);
  	\draw (a1_out2) -- +(2,0);
  	\draw (a1_in1) -- +(-.5,0);
	\end{tikzpicture}
	};
	\node (Q22) [right=.8 of Q21] {
	\begin{tikzpicture}[WD]
		\draw (0,0) -- (2,0);
	\end{tikzpicture}
	};	
	\node[label=above:{\tiny unital}] at ($(Q21.east)!.5!(Q22.west)$) {$=$};
	\node (Q31) [right=1 of Q22] {
	\begin{tikzpicture}[WD]
		\node[bb={1}{2}] (a1) {};
		\node[bb={1}{2}, minimum height=1ex, right=.5 of a1_out1] (a2) {};
		\draw (a2_in1) -- (a1_out1);
		\draw (a1_out2) -- +(2,0);
		\draw (a2_out1) -- +(.5,0);
		\draw (a2_out2) -- +(.5,0);
		\draw (a1_in1) -- +(-.5,0);
	\end{tikzpicture}
	};
	\node (Q32) [right=.8 of Q31] {
	\begin{tikzpicture}[WD]
		\node[bb={1}{2}] (a1) {};
		\node[bb={1}{2}, minimum height=1ex, right=.5 of a1_out2] (a2) {};
		\draw (a2_in1) -- (a1_out2);
		\draw (a1_out1) -- +(2,0);
		\draw (a2_out1) -- +(.5,0);
		\draw (a2_out2) -- +(.5,0);
		\draw (a1_in1) -- +(-.5,0);
	\end{tikzpicture}
	};
	\node[label=above:{\tiny associative}] at ($(Q31.east)!.5!(Q32.west)$) {$=$};
\end{tikzpicture}
\]
Not only is every object $r\in\rel$ equipped with these operations $\epsilon_r,\delta_r$, but they are coherent with respect to $\rel$'s monoidal structure. By this we mean first that the operations assigned to the monoidal unit are coherence isomorphisms: $\epsilon_I=\id_I$ and $\delta_I=\rho_I=\lambda_I$, where $\rho$ and $\lambda$ are the right and left unitors. Second, for any $r,s\in\rel$, the operations interact appropriately with the monoidal product:
\[
  \epsilon_{r\otimes s}=\epsilon_r\otimes\epsilon_s
  \qqand
  \delta_{r\otimes s}=(\delta_r\otimes\delta_s)\cp(\id_r\otimes\gamma_{r,s}\otimes\id_s).
\]
One notices immediately the need for a symmetry isomorphism $\gamma_{r,s}$ in the second equation, which is there so that the codomains agree ($r\otimes s\otimes r\otimes s$). In pictures:
\begin{equation} \label{eqn.compat_tensor}
\begin{tikzpicture}[baseline=(current bounding box.center)]
	\node (P1) {
	\begin{tikzpicture}[WD]
		\node[bb={2}{0}] (rs) {$\epsilon_{r\otimes s}$};
		\draw (rs_in1) to node[above, font=\tiny] {$r$} +(-1, 0);
		\draw (rs_in2) to node[below, font=\tiny] {$s$} +(-1, 0);		
	\end{tikzpicture}
	};
	\node (P2) [right=.7 of P1] {
	\begin{tikzpicture}[WD]
		\node[bb={1}{0}] (r) {$\epsilon_r$};
		\node[bb={1}{0}, below=.5 of r] (s) {$\epsilon_s$};
		\draw (r_in1) to node[above, font=\tiny] {$r$} +(-1, 0);
		\draw (s_in1) to node[below, font=\tiny] {$s$} +(-1, 0);
	\end{tikzpicture}
	};
	\node at ($(P1.east)!.5!(P2.west)$) {$=$};
	\node (P3) [right=2 of P2] {
	\begin{tikzpicture}[WD, bb port sep=.7]
		\node[bb={2}{4}] (rs) {$\delta_{r\otimes s}$};
		\begin{scope}[font=\tiny]
  		\draw (rs_in1) to node[above] {$r$} +(-1, 0);
  		\draw (rs_in2) to node[below] {$s$} +(-1, 0);
  		\draw (rs_out1) to node[above=-1pt] {$r$} +(1, 0);
  		\draw (rs_out2) to node[above=-1pt] {$s$} +(1, 0);
  		\draw (rs_out3) to node[below=-2pt] {$r$} +(1, 0);
  		\draw (rs_out4) to node[below=-2pt] {$s$} +(1, 0);
		\end{scope}
	\end{tikzpicture}
	};
	\node (P4) [right=.7 of P3] {
	\begin{tikzpicture}[WD, bb port sep=.7]
		\node[bb={1}{2}] (r) {$\delta_r$};
		\node[bb={1}{2}, below=.5 of r] (s) {$\delta_s$};
		\coordinate (r2) at ($(s_out1)+(2,0)$);
		\coordinate (s1) at ($(r_out2)+(2,0)$);
		\begin{scope}[font=\tiny]
			\draw (r_in1) to node[above] {$r$} +(-1, 0);
			\draw (s_in1) to node[below] {$s$} +(-1, 0);
			\draw (r_out1) -- +(2.5, 0) to node[above=-1pt] {$r$} +(.5, 0);
			\draw (r_out2) -- +(1, 0) to[out=0, in=180] (r2) to node[below=-1pt] {$r$} +(.5, 0);
			\draw (s_out1) -- +(1, 0) to[out=0, in=180] (s1) to node[above=-2pt] {$s$} +(.5, 0);
			\draw (s_out2) -- +(2.5, 0) to node[below=-2pt] {$s$} +(.5, 0);
    \end{scope}		
	\end{tikzpicture}
	};
	\node at ($(P3.east)!.5!(P4.west)$) {$=$};
\end{tikzpicture}
\end{equation}
The point is that every object in $\rel$ has this commutative comonoid structure, and the operations are coherent with $I$ and $\otimes$. In this situation we will say that $\rel$ \emph{supplies commutative comonoids}.

In general, we may talk of algebraic structures on a object being defined by a prop $\pp$. A prop is a strict symmetric monoidal category whose monoid of objects is $(\nn,0,+)$; in other words, a prop is a single-sorted symmetric monoidal theory. In the case of commutative comonoids, the relevant prop is the skeleton of $\finset\op$. Indeed, $\epsilon$ and $\delta$ represent the (opposites of) the unique functions $\varnothing\to\{1\}$ and $\{1,2\}\to\{1\}$, respectively.

We say that a symmetric monoidal category $\cat{C}$ \emph{supplies} $\pp$ if every object of $\cat{C}$ is equipped with the structure of $\pp$ in a way compatible with $\cat{C}$'s monoidal structure. This notion appears frequently in recent literature. One reason for this is that the compatibility with the monoidal product is a useful and intuitive feature when adding extra icons---``bells and whistles''---into the standard monoidal category string diagram language, yielding equations such as those in \cref{eqn.compat_tensor}.

Examples abound. To list a few: categories supplying comonoids figure strongly in categorical approaches to probability theory \cite{fong2012causal,fritz2019synthetic,cho2019disintegration}; categories supplying frobenius monoids---known as hypergraph categories---are important in networks and wiring-diagram languages \cite{Carboni:1991a,fong2019hypergraph}; categories supplying bimonoids underlie a categorical perspective on differentiation \cite{blute2009cartesian}; categories supplying so-called adjoint frobenius monoids and abelian relations underlie alternative approaches to regular and abelian categories \cite{fong2019regular,fong2019abelian}, and so on. Indeed, if we add an extra condition, which we call \emph{homomorphic supply}, categories homomorphically supplying commutative comonoids are simply cartesian monoidal categories \cite{fox1976coalgebras}, and categories homomorphically supplying bimonoids are those where the product is a biproduct.

Yet despite this plethora of examples, a general definition of supply has not yet been given: to now do so is the first goal of this article. If $\pp$ is a prop and $\cat{C}$ is a symmetric monoidal category, we define what it means for $\cat{C}$ to supply the algebraic structure encoded in $\pp$. We also define what it means for a strong monoidal functor $\cat{C}\to\cat{D}$ to \emph{preserve supplies}, i.e.\ to send a given supply of $\pp$ in $\cat{C}$ to a given supply of $\pp$ in $\cat{D}$. We give a number of examples both of supply and supply preservation.

The second goal is to provide some basic theory of supply. For example, given a supply of $\pp$ in $\cat{C}$ and a prop functor $\pp'\to\pp$, one obtains a supply of $\pp'$ in $\cat{C}$. We show that if $\cat{C}$ and $\cat{D}$ both supply $\pp$ then so does their biproduct $\cat{C}\oplus\cat{D}$, and that the projections and coprojections preserve supplies. Finally, any supply of $\pp$ in $\cat{C}$ induces a supply of $\pp$ in the strictification $\strict{\cat{C}}$, and it is preserved by the equivalence $\strict{\cat{C}}\to\cat{C}$.

We also discuss what it means for various maps in $(\cat{C},I,\otimes)$ to be \emph{homomorphisms} for the supplied structure. For example, it is well-known that if $\cat{C}$ is cartesian monoidal (i.e.\ if the monoidal product is given by the categorical product) then it supplies comonoids and every morphism $f\colon c\to d$ is a comonoid homomorphism, in the sense that the following diagrams commute:
\[
\begin{tikzcd}[column sep=small]
	c\ar[rr, "f"]\ar[rd, "\epsilon_c"']&&
	d\ar[dl, "\epsilon_d"]\\&
	I
\end{tikzcd}
\hspace{.7in}
\begin{tikzcd}
	c\ar[r, "f"]\ar[d, "\delta_c"']&
	d\ar[d, "\delta_d"]\\
	c\otimes c\ar[r, "f\otimes f"']&
	d\otimes d
\end{tikzcd}
\]
Again in pictures:
\begin{equation}\label{eqn.homo}
\begin{tikzpicture}[baseline=(P1)]
	\node (P1) {
	\begin{tikzpicture}[WD]
		\node[bb={1}{0}] (e) {$\epsilon_c$};
		\draw (e_in1) -- +(-1, 0);
	\end{tikzpicture}
	};
	\node (P2) [right=.7 of P1] {
	\begin{tikzpicture}[WD]
		\node[bb={1}{1}] (f) {$f$};
		\node[bb={1}{0}, right=.5 of f] (e) {$\epsilon_d$};
		\draw (f_in1) -- +(-1, 0);
		\draw (f_out1) -- (e_in1);
	\end{tikzpicture}
	};
	\node at ($(P1.east)!.5!(P2.west)$) {$=$};
	\node (P3) [right=2 of P2] {
	\begin{tikzpicture}[WD]
		\node[bb={1}{1}] (f) {$f$};
		\node[bb={1}{2}, right=.5 of f] (d) {$\delta_d$};
		\draw (f_in1) -- +(-1, 0);
		\draw (f_out1) -- (d_in1);
		\draw (d_out1) -- +(1, 0);
		\draw (d_out2) -- +(1, 0);
  \end{tikzpicture}	
	};
	\node (P4) [right=.7 of P3] {
	\begin{tikzpicture}[WD]
		\node[bb port sep=1.2, bb={1}{2}] (d) {$\delta_c$};
		\node[bb={1}{1}, right=.5 of d_out1, font=\tiny] (f1) {$f$};
		\node[bb={1}{1}, right=.5 of d_out2, font=\tiny] (f2) {$f$};
		\draw (d_in1) -- +(-1, 0);
		\draw (d_out1) -- (f1_in1);
		\draw (d_out2) -- (f2_in1);
		\draw (f1_out1) -- +(1, 0);
		\draw (f2_out1) -- +(1, 0);
	\end{tikzpicture}
	};
	\node at ($(P3.east)!.5!(P4.west)$) {$=$};
\end{tikzpicture}
\end{equation}
These equations hold in any cartesian monoidal category, e.g.\ $\smset$, but they \emph{do not hold} in $\rel$. (As an example, the first equation in \eqref{eqn.homo} does not hold in $\rel$. Take $c=d=1$, take $f\coloneqq\varnothing\ss 1\times 1$ to be the empty relation, and note that $\epsilon_c \neq (f\cp\epsilon_d)$.) We will show that the morphisms in $\cat{C}$ that are homomorphisms for the $\pp$-structure always form a monoidal subcategory. The above notion of homomorphic supply simply refers to the case that this subcategory is all of $\cat{C}$.

The main theorems of this paper are that supply and supply preservation are well-behaved with respect to coherence isomorphisms. In \cref{thm.coherence_isos_homos} we show that every associator and unitor in $\cat{C}$ is automatically a homomorphism for any supply. In \cref{thm.pres_supp_strongators_homo} we show that the coherence isomorphisms for strong monoidal functors $F\colon\cat{C}\to\cat{D}$ are automatically homomorphisms whenever $F$ preserves the supply. We give strictification theorems \cref{thm.supply_strictification,cor.strict_equiv_pres_supply}.
 
% Subsection %
\paragraph{Acknowledgements.}
We thank Dmitry Vagner for interesting and useful conversations. We also thank Tobias Fritz for catching an error in a draft and for suggesting the strictification theorem (\cref{thm.supply_strictification}), which he had previously proven independently in a special case and shared with us; see \cite{fritz2019synthetic} forthcoming. We acknowledge support from Honeywell Inc., and from AFOSR grants FA9550-17-1-0058 and FA9550-19-1-0113.

%======== Chapter ========%
\chapter{Notation and background}

% Subsection %
\paragraph{Basic notation.}
For a natural number $n\in\nn$ we denote the corresponding ordinal by $\ord{n}=\{1,\ldots,n\}\in\smset$. %In the introduction we wrote $1$ for what we will henceforth denote $\ord{1}$.
We denote composition of $f\colon a\to b$ and $g\colon b\to c$ by $(f\cp g)\colon a\to c$, i.e.\ we use diagrammatic order. When $c$ is an object we denote the identity morphism on it either by $c$ or by $\id_c$.

% Subsection %
\paragraph{Symmetric monoidal categories and coherence.}
Suppose $(\cat{C}, I, \otimes)$ is a symmetric monoidal category, $m\in\nn$ is a natural number, and $c\colon\ord{m}\to\cat{C}$ is a family of objects in $\cat{C}$. We denote
\begin{equation}\label{eqn.big_mon_prod}
	\bigotimes_{i\in\ord{m}}c(i)\coloneqq
  \big((c(1)\otimes c(2))\cdots\big)\otimes c(m)
\end{equation}
with the convention that when $m=0$ and $!\colon\ord{0}\to \cat{C}$ is the unique function, we put $\bigotimes !\coloneqq I$. We take this to be the canonical bracketing, so $c\otimes d\otimes e$ denotes $(c\otimes d)\otimes e$. If there exists $b\in\cat{C}$ such that $b=c(i)$ for all $i\in\ord{m}$, we denote the monoidal product in \eqref{eqn.big_mon_prod} by $b\tpow{m}\coloneqq\bigotimes_{i\in\ord{m}}b$. 

If $m,n\in\nn$ are natural numbers, and $c\colon \ord{m}\times \ord{n}\to\cat{C}$ is a family of objects in $\cat{C}$, we also have a natural isomorphism
\begin{equation}\label{eqn.symmetry}
\sigma\colon
\bigotimes_{i\in\ord{m}}\bigotimes_{j\in\ord{n}}c(i,j)\Too{\cong}
\bigotimes_{j\in\ord{n}}\bigotimes_{i\in\ord{m}}c(i,j).
\end{equation}
We refer to $\sigma$ as the \emph{symmetry} isomorphism, though note that it involves associators and unitors too, not just the symmetric braiding. We will be interested in two particular cases of the symmetry isomorphism \cref{eqn.symmetry}, namely for $m=2$ and $m=0$ and any $n\in\nn$:
\[\sigma\colon c_1\tpow{n}\otimes c_2\tpow{n}\Too{\cong}(c_1\otimes c_2)\tpow{n}
\qqand
\sigma\colon I\Too{\cong} I\tpow{n}.
\]

Many of our results will rely on Mac Lane's coherence theorem for symmetric monoidal categories \cite[Theorem XI.1]{MacLane:1998a}, which says the following. For any two ways to arrange brackets and monoidal units into a word with $n$ placeholders for objects in $\cat{C}$, and for each permutation of $n$ letters, there is an associated natural isomorphism, which Mac Lane calls the \emph{canonical isomorphism}, between the resulting functors $\cat{C}^n\to\cat{C}$. Moreover, composites and tensor products of canonical isomorphisms are again canonical. For example, everything we called a symmetry isomorphisms $\sigma$ in \cref{eqn.symmetry} is one of these canonical isomorphisms.

% Subsection %
\paragraph{The 2-category $\mathbb{S}\Cat{MC}$.}
Recall that a strong monoidal functor $(F,\varphi)\colon\cat{C}\to\cat{D}$ consists of a functor $F$ and natural isomorphisms
\[
	\varphi\colon I_{\cat{D}}\To{\cong} F(I_{\cat{C}})
	\qqand
	\varphi_{c,c'}\colon F(c)\otimes_{\cat{D}} F(c')\To{\cong} F(c\otimes_{\cat{C}} c').\]
	We refer to these isomorphism as the \emph{strongators} for $F$. A strong monoidal functor is \emph{strict} if all strongators are identities.

\begin{definition}\label{def.smf}
Let $\cat{C}$ and $\cat{D}$ be symmetric monoidal categories. Define $\smf(\cat{C},\cat{D})$ to be the symmetric monoidal category whose objects are strong monoidal functors $(F,\varphi)\colon\cat{C}\to\cat{D}$, whose morphisms are monoidal natural transformations, and whose symmetric monoidal structure is given pointwise.

Define $\ssmc$ to be the 2-category whose objects are symmetric monoidal categories and whose hom-categories are given by $\smf$.
\end{definition}

The pointwise condition in \cref{def.smf} means that the monoidal unit in $\smf(\cat{C},\cat{D})$ is given by the constant functor at the monoidal unit of $\cat{D}$ and that the monoidal product is given by $(F\otimes G)(c)\coloneqq F(c)\otimes G(c).$ The strongator of $F\otimes G$ for any $c,c'\in\cat{C}$ is given by the symmetry isomorphism
\[
	\sigma\colon
	\big(F(c)\otimes G(c)\big)\otimes\big(F(c')\otimes G(c')\big)
	\To{\cong}
	\big(F(c)\otimes F(c')\big)\otimes\big(G(c')\otimes G(c')\big).
\]

\cref{thm.smc_biprod,prop.biprod_smf_strict,thm.smc_all_prod_coprod} are not necessary for the main thrust of this note, so we will save their proofs for later; see \cref{chap.proofs}. %, pages \pageref{page.smc_biprod} and \pageref{page.biprod_smf_strict}, and page \pageref{page.smc_all_prod_coprod}.
However, they seem important to us, and not sufficiently well known. 

\begin{theorem}\label{thm.smc_all_prod_coprod}
The 2-category $\ssmc$ has all small products and coproducts, and products are strict.
\end{theorem}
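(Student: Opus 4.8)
The plan is to construct products and coproducts explicitly on underlying categories, and then verify the relevant universal properties at the level of the hom-categories $\smf(-,-)$, reading off the strictness of products from the form the comparison functor takes.

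\emph{Products.} Given a small family $(\cat{C}_i)_{i\in I}$, I would take $\prod_{i\in I}\cat{C}_i$ to be the Cartesian product of the underlying categories equipped with the componentwise symmetric monoidal structure: the unit is $(I_{\cat{C}_i})_i$, the tensor is $(c_i)_i\otimes(d_i)_i\coloneqq(c_i\otimes d_i)_i$, and the associators, unitors and braiding are taken componentwise. Each projection $\pi_j$ then preserves unit and tensor on the nose, so it is a \emph{strict} monoidal functor. For the universal property, given $\cat{D}$ and strong monoidal functors $F_i\colon\cat{D}\to\cat{C}_i$, the assignment $d\mapsto(F_i(d))_i$ with componentwise strongators $(\varphi^{F_i})_i$ is the unique strong monoidal $H$ with $H\cp\pi_j=F_j$ \emph{literally}, and the same componentwise description handles monoidal natural transformations. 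Hence the comparison $\smf(\cat{D},\prod_i\cat{C}_i)\to\prod_i\smf(\cat{D},\cat{C}_i)$ is an isomorphism of categories, not merely an equivalence; this is the sense in which products are strict. The empty product recovers the unit category.

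\emph{Coproducts.} The natural copairing, sending $(c_i)_i$ to $\bigotimes_i F_i(c_i)$, only makes sense when the tensor is finite, so I would model $\coprod_{i\in I}\cat{C}_i$ on the \emph{finitely supported} product (the direct-sum analogue of the coproduct of abelian groups): objects are families $(c_i)_i$ with $c_i=I_{\cat{C}_i}$ for all but finitely many $i$, and morphisms are families $(f_i)_i$ with $f_i=\id$ for all but finitely many $i$, composed componentwise. The coprojection $\iota_j$ sends $c$ to the family that is $c$ at position $j$ and $I$ elsewhere. The one genuine subtlety is that componentwise tensoring need not preserve finite support when the $\cat{C}_i$ are non-strict, since $I\otimes I\iso I$ but $I\otimes I\neq I$. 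I would resolve this by reducing to the strict case: by Mac Lane coherence each $\cat{C}_i$ is monoidally equivalent to a strict $\strict{\cat{C}_i}$ in which $I\otimes I=I$, so that componentwise tensoring genuinely preserves finite support; since equivalences in $\ssmc$ preserve and reflect bicolimits, a coproduct of the $\strict{\cat{C}_i}$ transports back to one for the original family.

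\emph{Coproduct universal property.} On the strict representatives I would define the copairing of $F_i\colon\cat{C}_i\to\cat{D}$ by $[F_i](c_i)_i\coloneqq\bigotimes_i F_i(c_i)$, the tensor ranging over the finite support in a fixed well-ordering of $I$; Mac Lane coherence renders this independent of the ordering up to canonical isomorphism and supplies the strongator, assembled from the strongators of the $F_i$ together with the symmetry isomorphisms $\sigma$. One then checks $\iota_j\cp[F_i]\iso F_j$ and that precomposition with the $\iota_j$ yields an equivalence $\smf(\coprod_i\cat{C}_i,\cat{D})\to\prod_i\smf(\cat{C}_i,\cat{D})$: essential surjectivity is the copairing, while fullness and faithfulness follow from the decomposition $(c_i)_i\iso\bigotimes_{j}\iota_j(c_j)$ over the support, which forces any strong monoidal $H$ out of the coproduct to be determined up to monoidal isomorphism by the restrictions $\iota_j\cp H$. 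Because $\iota_j\cp[F_i]$ agrees with $F_j$ only up to a unit coherence isomorphism (already on the unit object), this universal property is an equivalence rather than an isomorphism, which is precisely why coproducts, unlike products, fail to be strict.

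The main obstacle I expect is the coproduct, and specifically the coherence bookkeeping: verifying that the finitely supported tensor is a well-defined symmetric monoidal structure in the non-strict setting, that the copairing's strongator satisfies the hexagon and unit axioms, and that the decomposition $(c_i)_i\iso\bigotimes_j\iota_j(c_j)$ is natural enough to pin $H$ down up to a \emph{monoidal} isomorphism. Reducing to strict representatives via coherence is what keeps this manageable; the products, by contrast, are essentially formal.
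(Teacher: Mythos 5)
Your proposal is correct and matches the paper's construction essentially exactly: products are the componentwise product with strict projections (an isomorphism, not merely an equivalence, of hom-categories), and coproducts are the finitely supported product with the tensor-over-the-support copairing, universal only up to equivalence because $\iota_j\cp[F_i]\iso F_j$ involves unit isomorphisms. The one point of divergence is the $I\otimes I\neq I$ wrinkle: the paper handles it by redefining the pointwise tensor to ``replace $I\otimes I$ by $I$,'' whereas you strictify each factor and transport the coproduct back along the equivalences in $\ssmc$ --- both work, and yours makes explicit a detail the paper's sketch leaves to the reader.
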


In fact, finite products and coproducts coincide in $\ssmc$.

\begin{theorem}\label{thm.smc_biprod}
The 2-category $\ssmc$ of symmetric monoidal categories, strong monoidal functors, and monoidal natural transformations has 2-categorical biproducts.
\end{theorem}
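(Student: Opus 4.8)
The plan is to exhibit the strict 2-product $\cat{C}\times\cat{D}$ supplied by \cref{thm.smc_all_prod_coprod} as a simultaneous 2-coproduct, together with a zero object, and then to read off the biproduct matrix identities. First I would identify the zero object as the unit symmetric monoidal category $\mathbf 1$ (a single object $*$, which is its own monoidal unit, and only its identity morphism). It is 2-terminal since there is an essentially unique strong monoidal functor $\cat{E}\to\mathbf 1$. It is also 2-initial: a strong monoidal functor $(F,\varphi)\colon\mathbf 1\to\cat{E}$ is forced to send $*=I_{\mathbf 1}$ to an object $F(*)$ equipped with an isomorphism $\varphi\colon I_{\cat E}\iso F(*)$, and any two such functors are joined by a unique (necessarily invertible) monoidal natural transformation, so $\smf(\mathbf 1,\cat E)$ is a contractible groupoid, equivalent to $\mathbf 1$. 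Hence $\mathbf 1$ is a zero object, and each hom-category acquires a canonical zero functor $\cat C\to\mathbf 1\to\cat D$, namely the constant functor at $I_{\cat D}$.

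Next I would equip $\cat C\times\cat D$, with its pointwise monoidal structure and unit $(I_{\cat C},I_{\cat D})$, with injections $i_{\cat C}(c)\coloneqq(c,I_{\cat D})$ and $i_{\cat D}(d)\coloneqq(I_{\cat C},d)$. These are strong monoidal, the strongator of $i_{\cat C}$ being the identity in the $\cat C$-component together with the unitor $\lambda\colon I_{\cat D}\otimes I_{\cat D}\to I_{\cat D}$ in the $\cat D$-component (and its unit comparison an identity). To verify the 2-coproduct universal property I would define, for strong monoidal $F\colon\cat C\to\cat E$ and $G\colon\cat D\to\cat E$, the copairing $[F,G]\coloneqq(F\cp p_{\cat C})\otimes(G\cp p_{\cat D})$ using the pointwise monoidal product of \cref{def.smf}; concretely $[F,G](c,d)=F(c)\otimes G(d)$, and its strongator is exactly the symmetry isomorphism recorded in that definition. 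One checks $i_{\cat C}\cp[F,G]\iso F$ and $i_{\cat D}\cp[F,G]\iso G$ using the strongators of $G$ and $F$ and the unitors of $\cat E$. Essential uniqueness follows from the factorization $(c,d)=i_{\cat C}(c)\otimes i_{\cat D}(d)$: any strong monoidal $H$ with $i_{\cat C}\cp H\iso F$ and $i_{\cat D}\cp H\iso G$ satisfies $H(c,d)\iso F(c)\otimes G(d)$ by applying the strongator of $H$. I would then upgrade this to an equivalence of hom-categories $\smf(\cat C\times\cat D,\cat E)\simeq\smf(\cat C,\cat E)\times\smf(\cat D,\cat E)$ by checking the action on monoidal natural transformations, which is a routine naturality argument.

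Finally I would assemble the biproduct. The matrix identities $i_{\cat C}\cp p_{\cat C}=\id_{\cat C}$ and $i_{\cat D}\cp p_{\cat D}=\id_{\cat D}$ hold strictly, while $i_{\cat C}\cp p_{\cat D}$ and $i_{\cat D}\cp p_{\cat C}$ are the zero functors through $\mathbf 1$. Combined with the 2-product structure of \cref{thm.smc_all_prod_coprod} and the 2-coproduct structure just established, this is precisely the data of a 2-categorical biproduct $\cat C\oplus\cat D$. I expect the main obstacle to be the coherence bookkeeping: confirming that the strongator of $[F,G]$ satisfies the hexagon and unit axioms for a strong monoidal functor, and that the comparison isomorphisms $i_{\cat C}\cp[F,G]\iso F$ and $H\iso[F,G]$ are both monoidal and natural. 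I plan to discharge these by Mac Lane's coherence theorem, since every isomorphism involved is built from associators, unitors, and the braiding and is therefore canonical.
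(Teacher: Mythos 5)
Your proposal is correct and follows essentially the same route as the paper: the terminal category as a 2-categorical zero object, coprojections $c\mapsto(c,I)$ and $d\mapsto(I,d)$ into the pointwise monoidal product $\cat{C}\times\cat{D}$, the copairing $(c,d)\mapsto F(c)\otimes G(d)$ with strongator built from the braiding and the strongators of $F$ and $G$, and uniqueness via the factorization $(c,d)=(c,I)\otimes(I,d)$. The only cosmetic difference is that you import the strict 2-product structure from \cref{thm.smc_all_prod_coprod} while the paper verifies it inline.
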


We denote the biproduct of symmetric monoidal categories $\cat{C}$ and $\cat{D}$ by $\cat{C}\oplus\cat{D}$.

\begin{proposition}\label{prop.biprod_smf_strict}
Let $\cat{C}_1,\cat{C}_2,\cat{D}_1,\cat{D}_2$ be symmetric monoidal categories. The functor
\[
  \oplus\colon
  \smf(\cat{C}_1,\cat{D}_1)\times\smf(\cat{C}_2,\cat{D}_2)
  \to
  \smf(\cat{C}_1\oplus\cat{C}_2,\cat{D}_1\oplus\cat{D}_2)
\]
is strict monoidal.
\end{proposition}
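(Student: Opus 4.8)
The plan is to unwind the componentwise description of the biproduct's monoidal structure furnished by \cref{thm.smc_biprod} and then check directly that $\oplus$ preserves the pointwise monoidal structures of \cref{def.smf} on the nose. Throughout I rely on two facts. First, the underlying symmetric monoidal category of a biproduct $\cat{E}_1\oplus\cat{E}_2$ is the product symmetric monoidal category: its objects are pairs, its unit is $(I_{\cat{E}_1},I_{\cat{E}_2})$, and its tensor, associators, unitors and braiding---hence, by \cite[Theorem XI.1]{MacLane:1998a}, all of its canonical isomorphisms---are computed coordinatewise. Second, $\oplus$ acts coordinatewise: it sends $(F_1,F_2)$ to the strong monoidal functor $(c_1,c_2)\mapsto(F_1c_1,F_2c_2)$ whose strongators are the pairs of strongators of $F_1$ and $F_2$, and it acts likewise on monoidal natural transformations. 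Functoriality of $\oplus$ is then immediate from this coordinatewise definition, so it remains to exhibit strict monoidal structure.

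First I would check strict preservation of the unit. The monoidal unit of $\smf(\cat{C}_1,\cat{D}_1)\times\smf(\cat{C}_2,\cat{D}_2)$ is the pair $(\Delta_{I_{\cat{D}_1}},\Delta_{I_{\cat{D}_2}})$ of constant functors at the respective units, whose strongators are built from the unitors of $\cat{D}_1$ and $\cat{D}_2$. Applying $\oplus$ yields the functor sending every object to $(I_{\cat{D}_1},I_{\cat{D}_2})=I_{\cat{D}_1\oplus\cat{D}_2}$, i.e.\ the constant functor $\Delta_{I_{\cat{D}_1\oplus\cat{D}_2}}$, which is exactly the monoidal unit of $\smf(\cat{C}_1\oplus\cat{C}_2,\cat{D}_1\oplus\cat{D}_2)$. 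Its strongators are the coordinatewise unitors, which by the coordinatewise principle recalled at the outset are precisely the unitors of $\cat{D}_1\oplus\cat{D}_2$. So the two units agree on the nose, including their strong monoidal structure.

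Next, and this is the heart of the argument, I would check strict preservation of the tensor. For objects $(F_1,F_2)$ and $(G_1,G_2)$ one has
\[
\oplus\big((F_1,F_2)\otimes(G_1,G_2)\big)=(F_1\otimes G_1)\oplus(F_2\otimes G_2)
\qquad\text{and}\qquad
\oplus(F_1,F_2)\otimes\oplus(G_1,G_2)=(F_1\oplus F_2)\otimes(G_1\oplus G_2),
\]
and on objects and morphisms both send $(c_1,c_2)$ to $(F_1c_1\otimes G_1c_1,\;F_2c_2\otimes G_2c_2)$, agreeing on the nose because the tensor of $\cat{D}_1\oplus\cat{D}_2$ is coordinatewise. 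The only thing left to verify is that their strongators coincide. By \cref{def.smf} the strongator of a pointwise tensor is a symmetry isomorphism $\sigma$. On the left it is computed coordinatewise in $\cat{D}_1\oplus\cat{D}_2$, giving the pair $(\sigma_{\cat{D}_1},\sigma_{\cat{D}_2})$ of symmetry isomorphisms in the two factors; on the right it is a single symmetry isomorphism of the product monoidal category $\cat{D}_1\oplus\cat{D}_2$. Since symmetry isomorphisms are canonical isomorphisms and canonical isomorphisms of a product are coordinatewise, the right-hand strongator is again $(\sigma_{\cat{D}_1},\sigma_{\cat{D}_2})$, so the two agree. The same coordinatewise bookkeeping shows that $\oplus$ carries the pointwise associators, unitors and braiding of the source to those of the target, so $\oplus$ is in fact strict symmetric monoidal.

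The main obstacle is precisely this strongator comparison: it is the one place where the pointwise construction of \cref{def.smf} (which inserts a symmetry $\sigma$ to reconcile the codomains) interacts nontrivially with the coordinatewise structure of the biproduct. Reducing it to the statement that canonical isomorphisms in a product symmetric monoidal category are computed coordinatewise---an instance of Mac Lane coherence---is what makes the two strongators \emph{literally equal} rather than merely coherently isomorphic, and hence what upgrades the conclusion from ``strong'' to ``strict.'' Everything else is forced by the coordinatewise definition of $\oplus$ together with the pointwise and componentwise definitions of the monoidal structures involved.
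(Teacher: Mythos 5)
Your proof is correct and follows essentially the same route as the paper's: a direct coordinatewise computation showing the unit and the pointwise tensor are preserved on the nose. You go one step further than the paper by explicitly checking that the two strongators coincide (via coherence in the product monoidal category), a verification the paper's proof leaves implicit after its object-level calculation; this is a worthwhile addition, since equality in $\smf$ means equality of strong monoidal functors together with their strongators.
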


%======== Chapter ========%
\chapter{Supply}

In \cref{sec.supply} we define supply and give some first examples. \cref{sec.main_thm} then proves our main theorem, \cref{thm.coherence_isos_homos}: coherence isomorphisms are supply homomorphisms. We also provide a more compact definition of supply in \cref{thm.supply_v2}. In \cref{sec.further_theory}, we record some useful ways to construct new supplies from old. 

%==== Section ====%
\section{Definition of supply}\label{sec.supply}

Recall that a prop $\pp$ is a symmetric strict monoidal category whose monoid of objects is $(\nn,0,+)$. We denote its objects by $m$, $n$, etc.

\begin{definition}[Supply]\label{def.supply}
Let $\pp$ be a prop and $\cat{C}$ a symmetric monoidal category. A \emph{supply of $\pp$ in $\cat{C}$} consists of a strong monoidal functor $s_c\colon\pp\to\cat{C}$ for each object $c\in\cat{C}$, such that
\begin{enumerate}[label=(\roman*)]
	\item $s_c(m)=c\tpow{m}$ for each $m\in\nn$, 
	\item the strongator $c\tpow{m}\otimes c\tpow{n}\to c\tpow{(m+n)}$ is the unique coherence isomorphism for each $m,n\in\nn$, and
	\item the following diagrams commute for every $c,d\in\cat{C}$ and $\mu\colon m\to n$ in $\pp$:
\begin{equation}\label{eqn.supply_commute_tensors}
\begin{tikzcd}[column sep=55pt]
	c\tpow{m}\otimes d\tpow{m}\ar[r, "s_c(\mu)\otimes s_d(\mu)"]\ar[d, "\sigma"']&
	c\tpow{n}\otimes d\tpow{n}\ar[d, "\sigma"]\\
	(c\otimes d)\tpow{m}\ar[r, "s_{c\otimes d}(\mu)"']&
	(c\otimes d)\tpow{n}
\end{tikzcd}
\hspace{.7in}
\begin{tikzcd}
	I\ar[r, equal]\ar[d, "\sigma"']&
	I\ar[d, "\sigma"]\\
	I\tpow{m}\ar[r, "s_I(\mu)"']&
	I\tpow{n}
\end{tikzcd}
\end{equation}
where the $\sigma$'s are the symmetry isomorphisms from \cref{eqn.symmetry}.
\end{enumerate}
We further say that $f\colon c\to d$ is an \emph{$s$-homomorphism} if the following diagram commutes for all $\mu\colon m\to n$ in $\pp$:
\begin{equation}\label{eqn.nat_means_homo}
\begin{tikzcd}
	c\tpow{m}\ar[r, "s_c(\mu)"]\ar[d, "f\tpow{m}"']&
	c\tpow{n}\ar[d, "f\tpow{n}"]\\
	d\tpow{m}\ar[r, "s_d(\mu)"']&
	d\tpow{n}
\end{tikzcd}
\end{equation}
If every morphism in $\cat{C}$ is an $s$-homomorphism, we say that $s$ is a \emph{homomorphic supply}.
\end{definition}

\begin{remark}\label{rem.strict}
Note that if $s_c$ is strict, then conditions (i) and (ii) can be replaced by the condition $s_c(1)=c$. Moreover, if $\cat{C}$ is strict, then each $s_c$ must be too.
\end{remark}

\begin{example}\label{ex.supply_ids}
Let $\bij$ denote the initial prop: $\bij(m,n)$ is the set of bijections $\ord{m}\cong\ord{n}$. For any monoidal category $\cat{C}$ and object $c\in\cat{C}$, there is a unique strong monoidal functor $s_c\colon\bij\to\cat{C}$ satisfying conditions (i) and (ii) of \cref{def.supply}. The only morphisms in $\bij$ are the symmetries, and a computation shows that condition (iii) holds. Every morphism in $\cat{C}$ is an $s$-homomorphism. 

One might thus say that every symmetric monoidal category $\cat{C}$ uniquely \emph{supplies symmetries}, and every morphism in $\cat{C}$ is a homomorphism for symmetries.

Note that condition (ii) of \cref{def.supply} is necessary for the supply of $\bij$ to be unique.
\end{example}

\begin{remark}
One could give an alternative definition of supply by dropping conditions (i) and (ii) from \cref{def.supply}. This relaxed definition should be equivalent to ours in an appropriate sense. However we made the choice we did in order to cut down on the number of equivalent supplies. For example, we appreciate the fact from \cref{ex.supply_ids} that there is a unique supply of symmetries in any symmetric monoidal category. Props are syntactic in nature (a monoidal category equivalent to a prop is not generally a prop), and our choice was made in order to match that syntactic aesthetic.
\end{remark}

\begin{example}\label{ex.terminal_supply}
Let $\zero=\{*\}$ denote the zero object in $\ssmc$ (see \cref{thm.smc_biprod}). For any prop $\pp$ there is a unique supply of $\pp$ in $\zero$.
\end{example}

\begin{example}[Involutions]\label{ex.supply_involutions}
Consider the prop $\ccat{I}$ whose morphisms are given as follows:
\[
  \ccat{I}(m,n)=
  \begin{cases}
  	\emptyset&\tn{ if }m\neq n\\
		\{\id_m, i_m\}&\tn{ if }m=n
  \end{cases}
 \]
 with $i_m\cp i_m=\id_m$ and $i_m+i_n=i_{m+n}$. If $\cat{C}$ supplies $\ccat{I}$, we say it \emph{supplies involutions}. That means that every object $c\in\cat{C}$ is equipped with an involution $i_c\colon c\to c$, compatible with tensor products in the sense that $i_{c\otimes d}=i_c\otimes i_d$ and $i_I=\id_I$.
 
For a morphism $f\colon c\to d$ to be an involution-homomorphisms just means that $f$ commutes with the chosen involutions, i.e.\ $f\cp i_d=i_c\cp f$.
\end{example}

\begin{example}[Initial objects] \label{ex.term_init}
	Let $\pp$ be the prop generated by a unique map $\eta\colon 0 \to 1$. The monoidal unit of a symmetric monoidal category is an initial object iff the category homomorphically supplies $\pp$. Dually, the monoidal unit is an terminal object iff the category supplies $\pp\op$.
\end{example}

\begin{example}[Monoids]\label{def.prop_monoids}
The \emph{prop for commutative monoids} is given by two generators
\begin{equation}\label{eqn.gen_monoid}
\begin{tikzpicture}[WD, baseline=(eta)]
	\node[bb={0}{1}, fill=white] (eta) {$\eta$};
	\draw (eta_out1) -- +(.8,0);
	\node[bb={2}{1}, fill=white, right=4 of eta] (mu) {$\mu$};
	\draw (mu_out1) -- +(.8,0);
	\draw (mu_in1) -- +(-.8,0);
	\draw (mu_in2) -- +(-.8,0);
\end{tikzpicture}
\end{equation}
and three equations:
\begin{equation}\label{eqn.rel_monoid}
\begin{tikzpicture}[baseline=(a)]
	\node (P11) {
	\begin{tikzpicture}[WD]
		\node[bb={2}{1}, fill=white] (a) {};
		\coordinate (a1) at ($(a_in1)-(1,0)$);
		\coordinate (a2) at ($(a_in2)-(1,0)$);
		\draw (a1) to[in=180, out=0] (a_in2);
		\draw (a2) to[in=180, out=0] (a_in1);
		\draw (a_out1) -- +(.5,0);
	\end{tikzpicture}
	};
	\node (P12) [right=.8 of P11] {
	\begin{tikzpicture}[WD]
		\node[bb={2}{1}, fill=white] (a) {};
		\draw (a_in1) -- +(-.5,0);
		\draw (a_in2) -- +(-.5,0);
		\draw (a_out1) -- +(.5,0);
	\end{tikzpicture}
	};
	\node at ($(P11.east)!.5!(P12.west)$) {$=$};
	\node (P21) [right=1 of P12] {
  \begin{tikzpicture}[WD]
  	\node[bb={2}{1}, fill=white] (a1) {};
  	\node[bb={0}{1}, fill=white, left=.5 of a1_in1] (a2) {};
  	\draw (a2_out1) -- (a1_in1);
  	\draw (a1_in2) -- +(-2,0);
  	\draw (a1_out1) -- +(.5,0);
	\end{tikzpicture}
	};
	\node (P22) [right=.8 of P21] {
	\begin{tikzpicture}[WD]
		\draw (0,0) -- (2,0);
	\end{tikzpicture}
	};	
	\node at ($(P21.east)!.5!(P22.west)$) {$=$};
	\node (P31) [right=1 of P22] {
	\begin{tikzpicture}[WD]
		\node[bb={2}{1}, fill=white] (a1) {};
		\node[bb={2}{1}, fill=white, minimum height=1ex, left=.5 of a1_in1] (a2) {};
		\draw (a2_out1) -- (a1_in1);
		\draw (a1_in2) -- +(-2,0);
		\draw (a2_in1) -- +(-.5,0);
		\draw (a2_in2) -- +(-.5,0);
		\draw (a1_out1) -- +(.5,0);
	\end{tikzpicture}
	};
	\node (P32) [right=.8 of P31] {
	\begin{tikzpicture}[WD]
		\node[bb={2}{1}, fill=white] (a1) {};
		\node[bb={2}{1}, fill=white, minimum height=1ex, left=.5 of a1_in2] (a2) {};
		\draw (a2_out1) -- (a1_in2);
		\draw (a1_in1) -- +(-2,0);
		\draw (a2_in1) -- +(-.5,0);
		\draw (a2_in2) -- +(-.5,0);
		\draw (a1_out1) -- +(.5,0);
	\end{tikzpicture}
	};
	\node at ($(P31.east)!.5!(P32.west)$) {$=$};
\end{tikzpicture}
\end{equation}
It is equivalent to the skeleton of $\finset$, i.e. with $\Hom(m,n)\coloneqq\smset(\ord{m},\ord{n})$. For example, the generators shown in \cref{eqn.gen_monoid} correspond to the unique functions $\ord{0}\to\ord{1}$ and $\ord{2}\to\ord{1}$ respectively.

A supply of commutative monoids in $\cat{C}$ gives a map $\mu_c\colon c\otimes c\to c$ and $\eta_c\colon I\to c$ for each object $c$, compatible with tensor product in $\cat{C}$ and satisfying the usual monoid laws. A morphism $f\colon c\to d$ is a monoid-homomorphism in the sense of \cref{def.supply} iff it is in the usual sense: $\mu_c\cp f=(f\otimes f)\cp \mu_d$ and $\eta_c\cp f=\eta_d$.

Similarly, to supply commutative comonoids means to supply the prop given by the skeleton of $\finset\op$.
\end{example}

\begin{example}[Cartesian categories]\label{ex.cart_grant_comonoids}
A symmetric monoidal category has finite products iff it homomorphically supplies commutative comonoids. In this case, the categorical product coincides with the monoidal product. This was shown in \cite{fox1976coalgebras}.
\end{example}

\begin{example}[Compact closed categories]\label{ex.self_duals}
The \emph{prop $\dd$ for self-duals} has two generators
\[
\begin{tikzpicture}[WD, font=\tiny, light gray nodes]
	\node[bb={2}{0}] (a) {};
	\node[bb={0}{2}, right=5 of a] (b) {};
	\draw (a_in1) -- +(-.5, 0);
	\draw (a_in2) -- +(-.5, 0);
	\draw (b_out1) -- +(.5, 0);
	\draw (b_out2) -- +(.5, 0);
	\node[fill=white, inner sep=1pt, right=10pt of b] {};
	\node[fill=white, font=\normalsize] at ($(a.east)!.5!(b.west)$) {and};
\end{tikzpicture}
\]
and four equations
\begin{equation}\label{eqn.self_duals}
\begin{tikzpicture}
	\node (Q1) {
\begin{tikzpicture}[WD, font=\tiny, bb port length=.5, light gray nodes]
	\node[bb={2}{0}] (a) {};
\end{tikzpicture}	
	};
	\node (Q2) [right=.5 of Q1] {
\begin{tikzpicture}[WD, font=\tiny, bb port length=.2, light gray nodes]
	\node[bb={2}{0}] (a) {};
	\coordinate (end) at ($(a_in1)+(-1,0)$);
	\draw (a_in1) to[out=180, in=0] (a_in2-|end);
	\draw (a_in2) to[out=180, in=0] (a_in1-|end);
\end{tikzpicture}	
	};
	\node (Q3) [right=.9 of Q2]{
\begin{tikzpicture}[WD, font=\tiny, bb port length=.5, light gray nodes]
	\node[bb={0}{2}] (a) {};
\end{tikzpicture}	
	};
	\node (Q4) [right=.5 of Q3] {
\begin{tikzpicture}[WD, font=\tiny, bb port length=.2, light gray nodes]
	\node[bb={0}{2}] (a) {};
	\coordinate (end) at ($(a_out1)+(1,0)$);
	\draw (a_out1) to[out=0, in=180] (a_out2-|end);
	\draw (a_out2) to[out=0, in=180] (a_out1-|end);
\end{tikzpicture}	
	};
  \node at ($(Q1.east)!.5!(Q2.west)$) {$=$};
  \node at ($(Q3.east)!.5!(Q4.west)$) {$=$};
	\node (P1) [right=.9 of Q4] {
  \begin{tikzpicture}[WD, light gray nodes]
  	\node[bb={0}{2}] (a) {};
  	\node[bb={2}{0}, above right=-.75 and .5 of a] (b) {};
  	\draw (a_out1) -- (b_in2);
  	\draw (a_out2) -- +(2, 0);
  	\draw (b_in1) -- +(-2, 0);
  \end{tikzpicture}
  };
  \node (P2) [right=.5 of P1] {
  \begin{tikzpicture}[WD]
  	\draw (0,0) -- (1.5,0);
  \end{tikzpicture}
  };
  \node at ($(P1.east)!.5!(P2.west)$) {$=$};
	\node (P3) [right=.9 of P2]{
  \begin{tikzpicture}[WD, light gray nodes]
  	\node[bb={0}{2}] (a) {};
  	\node[bb={2}{0}, below right=-.75 and .5 of a] (b) {};
  	\draw (a_out2) -- (b_in1);
  	\draw (a_out1) -- +(2, 0);
  	\draw (b_in2) -- +(-2, 0);
  \end{tikzpicture}
  };
  \node (P4) [right=.5 of P3] {
  \begin{tikzpicture}[WD]
  	\draw (0,0) -- (1.5,0);
  \end{tikzpicture}
  };
  \node at ($(P3.east)!.5!(P4.west)$) {$=$};
\end{tikzpicture}
\end{equation}
$\dd$ is equivalent to the symmetric monoidal category of unoriented 1-cobordisms, though we will not need that fact.

A category suppling self-duals is called a \emph{self-dual compact closed} category.
\end{example}

\begin{example}\label{ex.mat}
  Let $\Cat{Mat}$ be the symmetric monoidal category with finite-dimensional real vector spaces $\mathbb{R}^n$ as objects, matrices between them as morphisms, Kronecker product $\otimes$ of matrices as tensor product. Then $\Cat{Mat}$ supplies self-duals, and the supply homomorphisms are orthogonal matrices.
\end{example}

\begin{example}\label{ex.frob_mon}
The prop for (special, commutative) frobenius monoids is $\cospan$, the category of cospans in $\finset$. A category supplying frobenius monoids is called a \emph{hypergraph} category; see \cite{fong2019hypergraph}.
\end{example}

\begin{proposition}\label{prop.p_supplies_itself}
Let $\pp$ be a prop. Then there is a supply of $\pp$ in $\pp$.
\end{proposition}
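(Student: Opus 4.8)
The plan is to construct, for each object $k\in\nn=\ob\pp$, a strict symmetric monoidal functor $s_k\colon\pp\to\pp$ and to verify conditions (i)--(iii) of \cref{def.supply}. On objects, condition (i) forces $s_k(m)=k\tpow m$, which in $\pp$ is the number $mk$, so the only real choice is on morphisms. Since the codomain $\pp$ is strict, I will build $s_k$ to be strict; then by \cref{rem.strict} conditions (i) and (ii) reduce to the single equation $s_k(1)=k$, which is immediate, and it remains only to check strict monoidality and condition (iii). I would define $s_k$ on a morphism $\mu\colon m\to n$ by conjugating the $k$-fold tensor power with the symmetry isomorphisms of \cref{eqn.symmetry}:
\[
  s_k(\mu)\coloneqq\sigma\cp\mu\tpow k\cp\sigma\inv,
\]
where $\sigma\colon k\tpow m\To{\cong}m\tpow k$ is the canonical isomorphism obtained from \cref{eqn.symmetry} by taking every entry of the $\ord m\times\ord k$ grid to be $1$ (a ``block transpose''), and $\sigma\inv\colon n\tpow k\to k\tpow n$ is the inverse of the analogous isomorphism for $n$. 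Informally, $s_k$ applies $\mu$ to $m$ side-by-side blocks each of width $k$.

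Functoriality is then immediate: $s_k(\id_m)=\sigma\cp\sigma\inv=\id$, and in the composite $s_k(\mu)\cp s_k(\nu)$ the inner factor $\sigma\inv\cp\sigma$ cancels, leaving $\sigma\cp\mu\tpow k\cp\nu\tpow k\cp\sigma\inv=\sigma\cp(\mu\cp\nu)\tpow k\cp\sigma\inv=s_k(\mu\cp\nu)$ by functoriality of the $k$-fold tensor power.

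The substantive step is strict monoidality. It suffices to check that the candidate strongator
\[
  \varphi_{m,n}\coloneqq(\sigma\otimes\sigma)\cp\chi\cp\sigma\inv\colon k\tpow m\otimes k\tpow n\to k\tpow{(m+n)}
\]
is the identity, where $\chi\colon m\tpow k\otimes n\tpow k\To{\cong}(m+n)\tpow k$ is the canonical ``riffle'' isomorphism interleaving the two families of blocks, whose naturality relates $\mu\tpow k\otimes\nu\tpow k$ to $(\mu\otimes\nu)\tpow k$; granting $\varphi_{m,n}=\id$, this naturality yields $s_k(\mu\otimes\nu)=s_k(\mu)\otimes s_k(\nu)$ for all $\mu,\nu$. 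Tracking the permutation that $\varphi_{m,n}$ induces on the $(m+n)k$ copies of $1$ shows that it is trivial, so by Mac Lane's coherence theorem $\varphi_{m,n}$ is the identity morphism of the strict category $\pp$. Hence $s_k$ is strict monoidal, its strongators are the (trivial) unique coherence isomorphisms demanded by condition (ii), and an identical coherence computation shows $s_k$ preserves the braiding, so $s_k$ is genuinely symmetric monoidal. Note that $s_0$ is simply the constant functor at the unit $0=I$.

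It then remains to verify condition (iii). The unit square on the right of \cref{eqn.supply_commute_tensors} is trivial, since $s_I=s_0$ is constant at $0=I$ and each $\sigma\colon I\to I\tpow m$ is an identity. For the left-hand square I would substitute the conjugation formulas for $s_{k_1}$, $s_{k_2}$, and $s_{k_1\otimes k_2}$; both composites then become a canonical isomorphism pre- and post-composed with a tensor power of $\mu$, and they agree by naturality of $\sigma$ together with coherence, since the competing structural isomorphisms on the two sides realize the same permutation of blocks. I expect this final coherence bookkeeping---confirming that the various competing canonical isomorphisms literally coincide---to be the main obstacle; it is routine rather than deep, being entirely a matter of Mac Lane's canonical isomorphisms, but it is where all the care is needed.
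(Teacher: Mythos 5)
Your construction is exactly the paper's: define $s_k$ on objects by $s_k(m)=k\cdot m$ and on morphisms by conjugating the $k$-fold tensor power of $\mu$ with the symmetry isomorphisms, with functoriality following from $\sigma\inv\cp\sigma=\id$. The paper leaves the monoidality and condition (iii) checks as ``an easy exercise,'' whereas you correctly sketch the coherence bookkeeping that discharges them, so the proposal is correct and follows essentially the same route.
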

\begin{proof}
The monoidal product in a prop is denoted $+$; we denote the $n$-fold monoidal product of some $k$ by $k\cdot n\coloneqq k+\Cdots{n}+k$.

For any $k\in\pp$ let $s_k\colon\pp\to\pp$ act on objects by $s_k(n)=k\cdot n$; this is strict because $s_k(m+n)=k\cdot(m+n)=(k\cdot m)+(k\cdot n)$. Given $\mu\colon m\to n$ in $\pp$, define $s_k(\mu)\colon k\cdot m\to k\cdot n$ by conjugating with the symmetries and applying $\mu$, on each of the $k$ factors:
\begin{equation}\label{eqn.conjugation}
	k\cdot m\To{\sigma_{k,m}}
	m\cdot k\To{\mu\cdot k}
	n\cdot k\To{\sigma_{n,k}}
	k\cdot n.
\end{equation}
This is functorial because $\sigma_{n,k}\cp\sigma_{k,n}=\id_{n\cdot k}$. It is an easy exercise to show that the diagrams in \cref{eqn.supply_commute_tensors} commute for any $k,\ell\in\pp$.
\end{proof}

Recall that the 2-category $\ssmc$ has coproducts (\cref{thm.smc_all_prod_coprod}). It is sometimes useful to note the following basic fact, which follows immediately from the definition of supply (\cref{def.supply}) and the universal property of coproducts.

\begin{proposition}
A supply $s$ of $\pp$ in $\cat{C}$ induces a strong monoidal functor $\bigsqcup_{c\in\ob(\cat{C})}\pp\to\cat{C}$ that is surjective on objects.
\end{proposition}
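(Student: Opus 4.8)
The plan is to read off the functor directly from the universal property of the coproduct, so that the content of the proposition is really just a repackaging of the data of a supply. By \cref{thm.smc_all_prod_coprod} the $2$-category $\ssmc$ has all small coproducts, so the object $\bigsqcup_{c\in\ob(\cat{C})}\pp$ exists and comes equipped with coprojection functors $\iota_c\colon\pp\to\bigsqcup_{c\in\ob(\cat{C})}\pp$, one for each $c\in\ob(\cat{C})$. Its defining universal property is that a strong monoidal functor out of $\bigsqcup_{c\in\ob(\cat{C})}\pp$ amounts to a family of strong monoidal functors $\pp\to\cat{C}$ indexed by $\ob(\cat{C})$, with the induced functor recovering each member of the family upon precomposition with the corresponding coprojection.

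By \cref{def.supply}, a supply $s$ of $\pp$ in $\cat{C}$ is precisely such a family: it assigns to each $c\in\ob(\cat{C})$ a strong monoidal functor $s_c\colon\pp\to\cat{C}$, where conditions (i)--(iii) further constrain the individual $s_c$ but play no role in assembling them. I would therefore apply the universal property to this family to obtain a unique strong monoidal functor, which I also denote $s\colon\bigsqcup_{c\in\ob(\cat{C})}\pp\to\cat{C}$, characterized by $\iota_c\cp s=s_c$ for every $c\in\ob(\cat{C})$. It then remains only to check surjectivity on objects. Fixing $c\in\ob(\cat{C})$ and invoking condition (i) of \cref{def.supply} with $m=1$ gives $s_c(1)=c\tpow{1}=c$. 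Since $\iota_c\cp s=s_c$ and $1$ is an object of $\pp$, the object $\iota_c(1)$ of $\bigsqcup_{c\in\ob(\cat{C})}\pp$ satisfies $s(\iota_c(1))=s_c(1)=c$. As $c$ was arbitrary, every object of $\cat{C}$ lies in the image of $s$, so $s$ is surjective on objects.

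There is no genuine obstacle here; the result is formal once the universal property is in hand. The one point worth a moment's care is that we need surjectivity on objects \emph{on the nose}, not merely essential surjectivity: this is exactly why condition (i) is phrased as the strict equality $s_c(1)=c$ rather than an isomorphism, and it is what lets $\iota_c(1)$ serve as an honest preimage of $c$. Beyond that, I would only note that the universal property is being used purely at the level of strong monoidal $1$-cells, so the $2$-dimensional structure of $\ssmc$ is irrelevant to the argument.
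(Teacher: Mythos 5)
Your argument matches the paper's: the paper offers no proof beyond remarking that the statement ``follows immediately from the definition of supply (\cref{def.supply}) and the universal property of coproducts,'' which is exactly what you have spelled out, including the correct observation that condition (i) with $m=1$ is what yields surjectivity on objects. The one point to watch is that the $2$-categorical universal property determines the induced functor only up to isomorphism (the triangles $\iota_c\cp s\cong s_c$ commute only up to canonical isomorphism, as in the paper's own treatment of \cref{eqn.coproduct_smc}), so to get surjectivity on objects on the nose rather than mere essential surjectivity one should appeal to the explicit construction of the coproduct in \cref{thm.smc_all_prod_coprod}, under which the object $\iota_c(1)$ is sent to $s_c(1)=c$ exactly.
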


%==== Section ====%
\section{An equivalent definition}\label{sec.main_thm}

In this section we prove our main theorem, \cref{thm.coherence_isos_homos}, which says that all coherence isomorphisms---associators, unitors, and braiding---are homomorphisms for any supply. We use it to provide a slightly more compact definition of supply in \cref{thm.supply_v2}.

\begin{theorem}\label{thm.coherence_isos_homos}
Suppose $s$ is a supply of $\pp$ in $\cat{C}$. All the coherence isomorphisms in $\cat{C}$ (associators, unitors, and braiding) are $s$-homomorphisms.
\end{theorem}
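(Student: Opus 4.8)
The plan is to reduce the commutativity of the homomorphism square \eqref{eqn.nat_means_homo} for each structural isomorphism to an instance of the \emph{naturality} of that same isomorphism, with condition (iii) of \cref{def.supply} and Mac Lane's coherence theorem serving as the glue. The braiding is the cleanest case, so I would treat it first and then indicate how associators and unitors follow by the same mechanism.

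For the braiding $\gamma_{c,d}\colon c\otimes d\to d\otimes c$, the homomorphism square for $\mu\colon m\to n$ involves $s_{c\otimes d}(\mu)$ and $s_{d\otimes c}(\mu)$. First I would use condition (iii) to rewrite each of these: e.g.\ $s_{c\otimes d}(\mu)=\sigma^{-1}\cp(s_c(\mu)\otimes s_d(\mu))\cp\sigma$, where the $\sigma$'s are the symmetry isomorphisms $c\tpow m\otimes d\tpow m\To{\cong}(c\otimes d)\tpow m$ of \eqref{eqn.symmetry} (at levels $m$ and $n$), and similarly for $d\otimes c$. Next I would use coherence to express the tensor power $\gamma_{c,d}\tpow m\colon(c\otimes d)\tpow m\to(d\otimes c)\tpow m$ as $\sigma^{-1}\cp\gamma_{c\tpow m,d\tpow m}\cp\sigma$, since both sides are canonical isomorphisms with the same source and target. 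Substituting these expressions into the square, all the $\sigma$'s cancel in pairs, and what remains is exactly the naturality square of the braiding applied to the two morphisms $s_c(\mu)\colon c\tpow m\to c\tpow n$ and $s_d(\mu)\colon d\tpow m\to d\tpow n$, namely $(s_c(\mu)\otimes s_d(\mu))\cp\gamma=\gamma\cp(s_d(\mu)\otimes s_c(\mu))$, which holds automatically.

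The associator and unitor cases run along the same lines. Writing a tensor word $w$ in variables $c_1,\dots,c_k$ as a functor $\cat C^k\to\cat C$, one first establishes by induction on the structure of $w$---using both diagrams of \eqref{eqn.supply_commute_tensors} (the left one for each $\otimes$, the right one for each occurrence of $I$)---that $s_{w(c_1,\dots,c_k)}(\mu)$ equals $w(s_{c_1}(\mu),\dots,s_{c_k}(\mu))$ conjugated by the canonical isomorphism $\sigma^w_m\colon w(c_1\tpow m,\dots)\To{\cong}w(c_1,\dots,c_k)\tpow m$ that distributes the $m$-fold power over $w$. For any structural isomorphism $\theta$---regarded as a natural transformation between two such word-functors (for the associator the two words in three variables, for a unitor the words $I\otimes x_1$ and $x_1$)---the tensor powers $\theta\tpow m,\theta\tpow n$ are again canonical, so by coherence they commute with the $\sigma$'s; substituting and cancelling then reduces the homomorphism square to the naturality square of $\theta$ evaluated at the morphisms $s_{c_i}(\mu)$. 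Since associators, unitors, and the braiding are all canonical isomorphisms, each is thereby an $s$-homomorphism.

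The main obstacle is bookkeeping rather than conceptual: one must set up the distributing isomorphisms $\sigma^w$ precisely and invoke Mac Lane's coherence theorem repeatedly to identify composites of $\sigma$'s and $\theta$'s---which all have the same source and target as canonical maps---so that the cancellations go through cleanly. The conceptual content in every case is simply that $s_{(-)}(\mu)$ transforms naturally under the coherence isomorphisms because the supply is compatible with $\otimes$ and $I$.
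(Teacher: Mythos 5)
Your proposal is correct and follows essentially the same route as the paper: use condition (iii) of \cref{def.supply} to conjugate $s_{w(c_1,\dots)}(\mu)$ into a tensor of the $s_{c_i}(\mu)$'s, reduce the homomorphism square to the naturality square of the structural isomorphism, and invoke Mac Lane's coherence theorem to identify the resulting composite of canonical maps with the tensor power of that isomorphism (including the use of the right-hand diagram of \eqref{eqn.supply_commute_tensors} for the unitor case). The only difference is presentational—you phrase it as substitution and cancellation with a general word-induction, where the paper pastes the same squares into one explicit diagram per case.
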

\begin{proof}
Choose any $\mu\colon m\to n$ in $\pp$. We need to show that whenever $f\colon x\to y$ is an associator, a unitor, or a braiding, the following diagram commutes:
\[
 \begin{tikzcd}
	x\tpow{m}\ar[d, "s_x(\mu)"']\ar[r, "f\tpow{m}"]&
	y\tpow{m}\ar[d, "s_y(\mu)"]\\
	x\tpow{n}\ar[r, "f\tpow{n}"']&
	y\tpow{n}
\end{tikzcd}
\]
When $f$ is the associator $(a\otimes b)\otimes c\to a\otimes (b\otimes c)$ we consider the following diagram:
\[
\begin{tikzcd}[row sep=24pt, font=\small, column sep=25pt]
  \big((a\otimes b)\otimes c\big)\tpow{m}\ar[d, "s(\mu)_{(a\otimes b)\otimes c}" description]&
  (a\tpow{m}\otimes b\tpow{m})\otimes c\tpow{m}\ar[l, "\sigma"']\ar[r, "\alpha"]\ar[d, "(s(\mu)_a\otimes s(\mu)_b)\otimes s(\mu)_c" description]&
  a\tpow{m}\otimes (b\tpow{m}\otimes c\tpow{m})\ar[r, "\sigma"]\ar[d, "s(\mu)_a\otimes (s(\mu)_b\otimes s(\mu)_c)" description]&
  \big(a\otimes (b\otimes c)\big)\tpow{m}\ar[d, "s(\mu)_{a\otimes (b\otimes c)}" description]
  \\
  \big((a\otimes b)\otimes c\big)\tpow{n}&
  (a\tpow{n}\otimes b\tpow{n})\otimes c\tpow{n}\ar[l, "\sigma"]\ar[r, "\alpha"']&
  a\tpow{n}\otimes (b\tpow{n}\otimes c\tpow{n})\ar[r, "\sigma"']&
  \big(a\otimes (b\otimes c)\big)\tpow{n}
\end{tikzcd}
\]
The left- and right-hand squares commute by two applications of the left-hand diagram in \cref{eqn.supply_commute_tensors}, while the center square is just the naturality of the associator. Replacing the leftward horizontal maps by their inverses, Mac Lane's coherence theorem implies the composite horizontal maps are simply the relevant tensor powers of associators. Moreover, the diagram still commutes, and hence associators are $s$-homomorphisms.

The argument that braidings are homomorphisms is strictly analogous to the above. The argument that unitors are homomorphisms is almost analogous, but the proof requires also the commutativity of the right-hand diagram in \cref{eqn.supply_commute_tensors}. Indeed, consider the following diagram:
\[
\begin{tikzcd}[row sep=30pt]
	(a\otimes I)\tpow{m}\ar[d, "s(\mu)_{a\otimes I}"']&
	a\tpow{m}\otimes I\tpow{m}\ar[d, "s(\mu)_a\otimes s(\mu)_I" description]\ar[l, "\sigma"']&[10pt]
	a\tpow{m}\otimes I\ar[d, "s(\mu)_a\otimes I" description]\ar[r, "\rho"]\ar[l, "a\tpow{m}\otimes\sigma"']&
	a\tpow{m}\ar[d, "s(\mu)_a"]\\
	(a\otimes I)\tpow{n}&
	a\tpow{n}\otimes I\tpow{n}\ar[l, "\sigma"]&
	a\tpow{n}\otimes I\ar[l, "a\tpow{n}\otimes\sigma"]\ar[r, "\rho"']&
	a\tpow{n}
\end{tikzcd}
\]
Its left-hand and middle diagrams commute by \cref{eqn.supply_commute_tensors} and the right-hand diagram commutes by the unitor axiom.
\end{proof}

We can use \cref{thm.coherence_isos_homos} to provide a more compact definition of supply. To do so, we need the following definition, which puts all the coherence isomorphisms in $\cat{C}$ into a single monoidal subcategory, denoted $\mob{\cat{C}}$.

\begin{definition}\label{def.mob}
For any symmetric monoidal category $\cat{C}$, let $\mob{\cat{C}}\ss\cat{C}$ denote the smallest subcategory containing
\begin{enumerate}[label=(\roman*)]
	\item all objects of $\cat{C}$ (and identity morphisms), and
	\item all coherence isomorphisms---unitors, associators, braiding, and their inverses---from $\cat{C}$.
\end{enumerate}
Thus $\mob{\cat{C}}$ inherits a symmetric monoidal structure, and we refer to it as the \emph{symmetric monoidal category of $\cat{C}$-objects}. There is an identity-on-objects inclusion $\inc\colon\mob{\cat{C}}\to\cat{C}$. 
\end{definition}

\begin{example}
When $\cat{C}$ is strict monoidal, $\mob{\cat{C}}=\ob(\cat{C})$ is discrete.
\end{example}

The reader may find it useful to consider the meaning of $\inc\tpow{m}\in\smf(\mob{\cat{C}},\cat{C})$ for $m\in\nn$. In particular it sends $c\mapsto c\tpow{m}=((c\otimes c)\otimes\cdots\otimes c)\otimes c$ and its strongators are the symmetry isomorphisms; see \cref{def.smf}.

\begin{theorem}\label{thm.supply_v2}
There is a one-to-one correspondence between supplies $s$ of $\pp$ in $\cat{C}$ and strong monoidal functors $\tilde{s}\colon\pp\to\smf(\mob{\cat{C}},\cat{C})$ such that 
\begin{enumerate}[label=(\roman*)]
	\item $m\mapsto\inc\tpow{m}$ for each $m\in\nn$, and
	\item the strongator $\tilde{s}(m)\otimes \tilde{s}(n)\to \tilde{s}(m+n)$ is the unique coherence map for each $m,n \in \nn$.
\end{enumerate}
\end{theorem}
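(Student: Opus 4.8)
The plan is to define the bijection on the level of underlying data and then show that, under it, the three clauses of \cref{def.supply} become exactly the hypotheses (i)--(ii) together with the assertion that each $\tilde{s}(\mu)$ is a well-defined monoidal natural transformation. Given a supply $s$, I would set $\tilde{s}(m)\coloneqq\inc\tpow{m}$ and, for $\mu\colon m\to n$ in $\pp$, let $\tilde{s}(\mu)$ be the candidate transformation $\inc\tpow{m}\Rightarrow\inc\tpow{n}$ with components $\tilde{s}(\mu)_c\coloneqq s_c(\mu)\colon c\tpow m\to c\tpow n$. In the other direction, from a functor $\tilde{s}$ obeying (i)--(ii) I would read off $s_c(m)\coloneqq c\tpow m$ and $s_c(\mu)\coloneqq\tilde{s}(\mu)_c$. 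Both assignments identify the same piece of data, $s_c(\mu)=\tilde{s}(\mu)_c$, so once each is shown to be well defined they are patently mutually inverse; all the content is in the two well-definedness checks.

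The key step---and the only place the main theorem enters---is checking that the family $\{s_c(\mu)\}_{c}$ actually forms a natural transformation between the functors $\inc\tpow m,\inc\tpow n\colon\mob{\cat{C}}\to\cat{C}$. By \cref{def.mob} the morphisms of $\mob{\cat{C}}$ are the coherence isomorphisms of $\cat{C}$, so such a natural transformation is precisely a family of maps $c\tpow m\to c\tpow n$ whose naturality square commutes against every coherence isomorphism $f$; and that square is exactly the condition that $f$ be an $s$-homomorphism for $\mu$. This holds for the generating associators, unitors and braidings by \cref{thm.coherence_isos_homos}, and it propagates to arbitrary coherence isomorphisms because $s$-homomorphisms are closed under composition (paste naturality squares, using $(f\cp g)\tpow k=f\tpow k\cp g\tpow k$) and under tensor (a short chase from clause (iii) of \cref{def.supply} together with naturality of $\sigma$). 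This is exactly why the domain is cut down to $\mob{\cat{C}}$: naturality over all of $\cat{C}$ would be far too strong, whereas naturality over $\mob{\cat{C}}$ is supplied verbatim by the main theorem.

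With naturality in hand, I would match the remaining monoidal data. Writing out what it means for $\tilde{s}(\mu)$ to be a \emph{monoidal} natural transformation between $\inc\tpow m$ and $\inc\tpow n$---whose strongators are the symmetry isomorphisms $\sigma$ of \cref{def.smf}---recovers, componentwise, precisely the two squares of \cref{eqn.supply_commute_tensors}: the tensor-compatibility square is the left-hand diagram (taking the second object to be $d$) and the unit-compatibility condition is the right-hand diagram. Hence clause (iii) of \cref{def.supply} is equivalent to monoidality of every $\tilde{s}(\mu)$. Functoriality of $\tilde{s}$ on $\pp$ follows from the functoriality of the $s_c$, since composition and identities in $\smf(\mob{\cat{C}},\cat{C})$ are computed pointwise. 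I would then equip $\tilde{s}$ with the strongator whose $c$-component is the unique coherence isomorphism $c\tpow m\otimes c\tpow n\to c\tpow{(m+n)}$; clauses (i)--(ii) of \cref{def.supply} are then literally clauses (i)--(ii) of the statement read pointwise, and in the reverse direction each $s_c$ is strong monoidal by the same pointwise reasoning.

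I expect the only real labor to be routine coherence bookkeeping in the last paragraph: verifying that this chosen strongator for $\tilde{s}$ is itself a monoidal natural transformation and that it satisfies the associativity, unit, and symmetry coherence axioms for a strong monoidal functor (together with the degenerate comparison of the constant-$I$ functor with $\inc\tpow 0$). Each of these is an equation of monoidal natural transformations that may be verified at each object $c$, where it reduces to an instance of Mac Lane's coherence theorem. Conceptually nothing here is delicate; the single load-bearing input is \cref{thm.coherence_isos_homos}, invoked exactly once, to guarantee that the pointwise data $s_c(\mu)$ glue into a natural transformation over $\mob{\cat{C}}$.
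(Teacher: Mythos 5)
Your proposal is correct and follows essentially the same route as the paper's proof: unpack what it means for $\tilde{s}$ to be a strong monoidal functor into $\smf(\mob{\cat{C}},\cat{C})$, match monoidality of each $\tilde{s}(\mu)$ with condition (iii) of \cref{def.supply}, match functoriality and the strongators of $\tilde{s}$ with the strong-monoidality of each $s_c$, and use \cref{thm.coherence_isos_homos} exactly once to get naturality over $\mob{\cat{C}}$. Your extra remark that the homomorphism property propagates from the generating coherence isomorphisms to all of $\mob{\cat{C}}$ via closure under composition and tensor is a point the paper elides but is consistent with its argument (and with \cref{thm.homos_form_subcat}).
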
\begin{proof}
A strong monoidal functor $\tilde{s}$ obeying (i) and (ii) is simply a supply $s$ of $\pp$ in $\cat{C}$ such that the coherence maps are $s$-homomorphisms. But \cref{thm.coherence_isos_homos} shows that every supply has this property, and so the two notions coincide. We explain this in detail.

Let $\tilde{s}$ be a strong monoidal functor obeying (i) and (ii). Note that (i) defines $\tilde{s}$ on objects. On morphisms, each $\mu\colon m \to n$ defines a monoidal natural transformation $\tilde{s}(\mu)\colon \inc\tpow{m} \Rightarrow \inc\tpow{n}$. Explicitly, this is, for each object $c \in \cat{C}$, a morphism $\tilde{s}(\mu)_c \colon c\tpow{m} \to c \tpow{n}$ obeying naturality and monoidality conditions. Naturality requires 
\begin{equation}\label{eqn.C_0_homos}
\begin{tikzcd}
	c\tpow{m}\ar[d, "s_c(\mu)"']\ar[r, "f\tpow{m}"]&
	d\tpow{m}\ar[d, "s_d(\mu)"]\\
	c\tpow{n}\ar[r, "f\tpow{n}"']&
	d\tpow{n}
\end{tikzcd}
\end{equation}
to commute for all maps $f\colon c \to d$ in $\mob{\cat{C}}$---that is, all coherence maps of $\cat{C}$---while monoidality requires the diagrams
\begin{equation}\label{eqn.supply_commute_tensors_v2}
\begin{tikzcd}[column sep=55pt]
	c\tpow{m}\otimes d\tpow{m}\ar[r, "\tilde{s}(\mu)_c\otimes \tilde{s}(\mu)_d"]\ar[d, "\sigma"']&
	c\tpow{n}\otimes d\tpow{n}\ar[d, "\sigma"]\\
	(c\otimes d)\tpow{m}\ar[r, "\tilde{s}(\mu)_{c\otimes d}"']&
	(c\otimes d)\tpow{n}
\end{tikzcd}
\hspace{.7in}
\begin{tikzcd}
	I\ar[r, equal]\ar[d, "\sigma"']&
	I\ar[d, "\sigma"]\\
	I\tpow{m}\ar[r, "\tilde{s}(\mu)_I"']&
	I\tpow{n}
\end{tikzcd}
\end{equation}
commute for all $m,n \in \nn$.

The functoriality of $\tilde{s}$ requires that for all $\mu\colon m \to n$ and $\nu\colon n\to p$ we have
\[
  \tilde{s}(\mu\cp\nu)_c=\tilde{s}(\mu)_c\cp \tilde{s}(\nu)_c
\]
while the monoidality of $\tilde{s}$ with respect to the strongators given in condition (ii) imply that for all $\mu\colon m \to n $ and $\mu'\colon m' \to n'$ we have
\[
\begin{tikzcd}[column sep=55pt]
	c\tpow{m}\otimes c\tpow{m'}\ar[r, "\tilde{s}(\mu)_c\otimes \tilde{s}(\mu')_c"]\ar[d, "\alpha"']&
	c\tpow{n}\otimes c\tpow{n'}\ar[d, "\alpha"]\\
	c\tpow{m+m'}\ar[r, "\tilde{s}(\mu+\mu')_{c}"']&
	c\tpow{n+n'}.
\end{tikzcd}  
\]

It is now straightforward to see [1] that the functoriality and monoidality of $\tilde{s}$ with respect to the strongators of condition (ii) states exactly that for each $c \in \cat{C}$ the component $\tilde{s}(-)_c$ defines a strong monoidal functor $\pp \to \cat{C}$ obeying conditions (i) and (ii) of \cref{def.supply}, [2] that the monoidality diagrams \cref{eqn.supply_commute_tensors_v2} of each natural transformation $\tilde{s}(\mu)$ are exactly the diagrams \cref{eqn.supply_commute_tensors} of condition (iii) in \cref{def.supply}, and [3] that the naturality of each $\tilde{s}(\mu)$ with respect to $\mob{\cat{C}}$ is exactly the homomorphism property of \cref{thm.coherence_isos_homos}. This proves the theorem.
\end{proof}

\begin{corollary}\label{cor.strict_supply_v2}
Let $\cat{C}$ be a symmetric strict monoidal category. There is a one-to-one correspondence between supplies $s$ of $\pp$ in $\cat{C}$ and strict monoidal functors $\tilde{s}\colon\pp\to\smf(\mob{\cat{C}},\cat{C})$ such that $1 \mapsto \inc$.
\end{corollary}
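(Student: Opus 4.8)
The plan is to deduce this directly from \cref{thm.supply_v2}: that theorem already identifies supplies of $\pp$ in $\cat{C}$ with strong monoidal functors $\tilde{s}\colon\pp\to\smf(\mob{\cat{C}},\cat{C})$ obeying conditions (i) and (ii), so it suffices to show that when $\cat{C}$ is strict these two conditions collapse to the single requirement that $\tilde{s}$ be \emph{strict} with $1\mapsto\inc$. The engine of the argument is the observation that strictness of $\cat{C}$ propagates to the functor category $\smf(\mob{\cat{C}},\cat{C})$.

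First I would establish this structural fact. By \cref{def.smf} the monoidal product on $\smf(\mob{\cat{C}},\cat{C})$ is computed pointwise, $(F\otimes G)(c)=F(c)\otimes G(c)$, with unit the functor constant at $I_{\cat{C}}$; hence its associator and unitors are, at each object $c$, the associator and unitors of $\cat{C}$, which are identities. So $\smf(\mob{\cat{C}},\cat{C})$ is itself strict. (Note this needs no claim that $\mob{\cat{C}}$ is discrete: the braidings it carries play no role here, only the ambient monoidal structure does.) In particular, for each $m,n\in\nn$ the unique coherence isomorphism $\inc\tpow{m}\otimes\inc\tpow{n}\To{\cong}\inc\tpow{m+n}$ is, evaluated at any $c$, the canonical map $c\tpow{m}\otimes c\tpow{n}\to c\tpow{m+n}$, which in a strict monoidal category is an on-the-nose identity; thus this coherence isomorphism is the identity natural transformation.

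With that in hand the two directions of the correspondence are routine. Given strong monoidal $\tilde{s}$ satisfying (i) and (ii): condition (i) at $m=1$ gives $1\mapsto\inc$; condition (ii) identifies each binary strongator of $\tilde{s}$ with the coherence isomorphism just shown to be the identity, and the unit strongator $I\to\tilde{s}(0)=\inc\tpow{0}$ is then forced to be the identity by the unit coherence axiom for a strong monoidal functor, so $\tilde{s}$ is strict. Conversely, a strict $\tilde{s}$ with $1\mapsto\inc$ satisfies $\tilde{s}(m)=\tilde{s}(1)\tpow{m}=\inc\tpow{m}$, which is (i), and its identity strongators agree with the identity coherence maps, which is (ii). Hence the functors classified by \cref{thm.supply_v2} are exactly the strict monoidal functors with $1\mapsto\inc$, and the bijection of that theorem restricts to the claimed one. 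The only step demanding real care is the structural fact of the second paragraph---that every coherence isomorphism serving as a strongator in $\smf(\mob{\cat{C}},\cat{C})$ degenerates to an identity---which rests squarely on the pointwise monoidal structure together with the equality $c\tpow{m}\otimes c\tpow{n}=c\tpow{m+n}$ in strict $\cat{C}$; this is the functor-level analogue of the simplification recorded for the individual $s_c$ in \cref{rem.strict}.
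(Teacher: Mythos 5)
Your proof is correct and follows exactly the route the paper intends: the corollary is stated without proof as an immediate consequence of \cref{thm.supply_v2}, and your key observation---that strictness of $\cat{C}$ makes the pointwise monoidal structure on $\smf(\mob{\cat{C}},\cat{C})$ strict, so that conditions (i) and (ii) collapse to ``strict with $1\mapsto\inc$''---is precisely the functor-level analogue of \cref{rem.strict}, as you note. The extra care you take with the nullary strongator (forced to be the identity by the unit coherence axiom) is a detail the paper elides but is worth having.
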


Because of the one-to-one correspondence \cref{thm.supply_v2}, we often elide the difference between the supply $s$ and the strong monoidal functor $\tilde{s}$. 

\begin{theorem}\label{thm.homos_form_subcat}
Let $s$ be a supply of $\pp$ in $\cat{C}$. Then the collection of $s$-homomorphisms forms a monoidal subcategory $\mob{\cat{C}}\ss\cat{C}_s\ss\cat{C}$, and the functor $s\colon\pp\to\smf(\mob{\cat{C}},\cat{C})$ factors through a strong monoidal functor
\[s\colon\pp\to\smf(\cat{C}_s,\cat{C})\]
satisfying the two conditions of \cref{thm.supply_v2}.
\end{theorem}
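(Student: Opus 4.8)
The plan is to check, in order, that the $s$-homomorphisms (a) contain all identities and are closed under composition, so they form a subcategory $\cat{C}_s\ss\cat{C}$ on all objects; (b) contain all coherence isomorphisms, giving $\mob{\cat{C}}\ss\cat{C}_s$; (c) are closed under $\otimes$, so $\cat{C}_s$ is a monoidal subcategory; and (d) that $s$, viewed via \cref{thm.supply_v2} as a strong monoidal functor into $\smf(\mob{\cat{C}},\cat{C})$, lifts to one into $\smf(\cat{C}_s,\cat{C})$.

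For (a): an identity satisfies $(\id_c)\tpow m=\id_{c\tpow m}$, so the square \cref{eqn.nat_means_homo} degenerates to $s_c(\mu)=s_c(\mu)$; and given $s$-homomorphisms $f\colon c\to d$ and $g\colon d\to e$, stacking their squares \cref{eqn.nat_means_homo} vertically and using $(f\cp g)\tpow m=f\tpow m\cp g\tpow m$ exhibits the homomorphism square for $f\cp g$ as the outer rectangle. For (b), \cref{thm.coherence_isos_homos} already shows every associator, unitor, and braiding is an $s$-homomorphism; since inverting the two vertical legs of \cref{eqn.nat_means_homo} shows the inverse of an invertible $s$-homomorphism is again one, $\cat{C}_s$ contains all coherence isomorphisms together with their inverses, hence all of $\mob{\cat{C}}$.

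The main work is (c). Given $s$-homomorphisms $f\colon c\to d$ and $f'\colon c'\to d'$, I want the homomorphism square for $f\otimes f'$. The crucial input is that the symmetry isomorphism $\sigma\colon c\tpow m\otimes c'\tpow m\to(c\otimes c')\tpow m$ is a canonical isomorphism in the sense of Mac Lane and is therefore natural in its arguments; so the square relating $f\tpow m\otimes f'\tpow m$ to $(f\otimes f')\tpow m$ across two copies of $\sigma$ commutes, and likewise in degree $n$. I would then assemble a rectangle whose outer vertical legs are $(f\otimes f')\tpow m$ and $(f\otimes f')\tpow n$, and whose interior consists, left to right, of a $\sigma$-naturality square, the tensor of the homomorphism squares of $f$ and $f'$ (commuting by functoriality of $\otimes$ and the two homomorphism properties), and a second $\sigma$-naturality square. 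The left diagram of \cref{eqn.supply_commute_tensors} (condition (iii) of \cref{def.supply}) rewrites the top and bottom composites as $s_{c\otimes c'}(\mu)$ and $s_{d\otimes d'}(\mu)$, so the rectangle \emph{is} the homomorphism square \cref{eqn.nat_means_homo} for $f\otimes f'$. Since all objects, and in particular $I$, lie in $\cat{C}_s$, this makes $\cat{C}_s$ a symmetric monoidal subcategory.

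For (d), write $\inc'\colon\cat{C}_s\hookrightarrow\cat{C}$ for the (strict monoidal) inclusion and define the lift on objects by $m\mapsto(\inc')\tpow m$ and on a morphism $\mu\colon m\to n$ by the same family $\big(s_c(\mu)\big)_c$ used by $s$. The only new point to verify is that this family is natural with respect to \emph{all} of $\cat{C}_s$, not merely $\mob{\cat{C}}$; but naturality at a morphism $f$ of $\cat{C}_s$ is exactly the square \cref{eqn.nat_means_homo}, which holds because every morphism of $\cat{C}_s$ is by definition an $s$-homomorphism. Monoidality of each lifted natural transformation, and functoriality and strong monoidality of the lift, are computed componentwise and so agree with those of $s$; thus conditions (i) and (ii) of \cref{thm.supply_v2} transfer unchanged, and restricting the lift along the inclusion $\mob{\cat{C}}\hookrightarrow\cat{C}_s$ returns $s$, which is the asserted factorization. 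The one genuinely non-formal step is the tensor-closure argument in (c), which is why I expect the naturality of $\sigma$ to carry the weight of the proof.
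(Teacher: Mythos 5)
Your proof is correct and takes essentially the same route as the paper's: coherence isomorphisms lie in $\cat{C}_s$ by \cref{thm.coherence_isos_homos}, closure under composition is immediate, closure under $\otimes$ follows from the monoidality squares \cref{eqn.supply_commute_tensors} together with naturality of the canonical isomorphism $\sigma$, and the factorization through $\smf(\cat{C}_s,\cat{C})$ is a repackaging of the definition of $s$-homomorphism. You merely spell out in more detail (the explicit $\sigma$-naturality rectangle in step (c), and closure under inverses in step (b)) what the paper states as following from \cref{eqn.supply_commute_tensors_v2} or leaves as obvious.
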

\begin{proof}
We showed in \cref{thm.coherence_isos_homos} that every coherence isomorphism in $\cat{C}$ is an $s$-homomorphism. It is obvious that if $f\colon c\to d$ and $g\colon d\to e$ are $s$-homomorphisms then so is $f\cp g$. Moreover, if $f_1\colon c_1\to d_1$ and $f_2\colon c_2\to d_2$ are $s$-homomorphisms then so is $(f_1\otimes f_2)$; this follows from \cref{eqn.supply_commute_tensors_v2}. Thus $\cat{C}_s$ forms a monoidal subcategory of $\cat{C}$, and $\mob{\cat{C}}\ss\cat{C}_s$. The factoring of $s$ through $\smf(\cat{C}_s,\cat{C})$ is just a repackaging of the statement that every morphism in $\cat{C}_s$ is an $s$-homomorphism. 
\end{proof}

%==== Section ====%
\section{Transfer of supply}\label{sec.further_theory}

In this section we present a number of propositions that describe how new supplies may be constructed from old: a supply of $\qq$ in $\cat{C}$ induces a supply of $\pp$ in $\cat{C}$ for any prop functor $\pp \to \qq$; if $\cat{C}$ and $\cat{D}$ supply $\pp$ then so does their biproduct $\cat{C}\oplus\cat{D}$; a supply transfers along an essentially surjective, strict monoidal functor $\cat{C} \to \cat{D}$; and a supply on $\cat{C}$ induces a supply on its strictification $\strict{C}$.

\begin{proposition}\label{cor.change_of_supply}
Let $F\colon\pp\to\qq$ be a prop functor. For any supply $s$ of $\qq$ in $\cat{C}$, we have a supply $(F\cp s)$ of $\pp$ in $\cat{C}$.
\end{proposition}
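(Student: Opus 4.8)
The plan is to verify directly that the assignment $c\mapsto F\cp s_c$ satisfies the three conditions of \cref{def.supply}, reducing each to the corresponding property of the supply $s$ of $\qq$ together with the defining features of a prop functor. Recall that a prop functor $F\colon\pp\to\qq$ is a strict symmetric monoidal functor, and since both props have object monoid $(\nn,0,+)$ and $F$ must preserve it, $F$ is the identity on objects, so $F(m)=m$ for all $m\in\nn$ and all strongators of $F$ are identities. First I would note that, as $s_c\colon\qq\to\cat{C}$ is strong monoidal and $F$ is (strictly) strong monoidal, the composite $F\cp s_c\colon\pp\to\cat{C}$ is again strong monoidal; thus the data $c\mapsto F\cp s_c$ has the right shape to be a supply of $\pp$ in $\cat{C}$.

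Condition (i) is immediate: $(F\cp s_c)(m)=s_c(F(m))=s_c(m)=c\tpow{m}$, using that $F$ is the identity on objects and condition (i) for $s$. For condition (ii) I would compute the strongator of the composite $F\cp s_c$ at a pair $(m,n)$. The strongator of a composite is $s_c$ applied to the strongator of $F$, postcomposed with the strongator of $s_c$ at $\bigl(F(m),F(n)\bigr)$; because $F$ is strict its strongator is the identity, so the strongator of $F\cp s_c$ at $(m,n)$ coincides with that of $s_c$ at $(m,n)$, which is the unique coherence isomorphism $c\tpow{m}\otimes c\tpow{n}\to c\tpow{(m+n)}$ by condition (ii) for $s$.

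For condition (iii), the key observation is that $(F\cp s)_c(\mu)=s_c(F(\mu))$ for every $\mu\colon m\to n$ in $\pp$, where $F(\mu)\colon m\to n$ is a morphism of $\qq$. Hence each of the two squares of \cref{eqn.supply_commute_tensors} for the supply $F\cp s$ at $\mu$ is literally the corresponding square for $s$ at the morphism $F(\mu)$, and these commute by condition (iii) for $s$ applied to $F(\mu)$. This completes the verification. Alternatively, and more compactly, I could invoke \cref{thm.supply_v2}: the supply $s$ corresponds to a strong monoidal functor $\tilde{s}\colon\qq\to\smf(\mob{\cat{C}},\cat{C})$ satisfying the two conditions there, and one simply forms the composite $F\cp\tilde{s}\colon\pp\to\smf(\mob{\cat{C}},\cat{C})$, whose conditions (i) and (ii) transfer exactly as above using that $F$ is strict and identity-on-objects; it then corresponds to a supply of $\pp$ in $\cat{C}$ which unwinds to $F\cp s$.

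The whole argument is routine, as befits a corollary; the only point requiring genuine care is the bookkeeping behind condition (ii)---confirming that precomposing with a strict monoidal functor leaves the strongators unchanged---and that is where I expect the (minor) obstacle to lie. Everything else follows immediately from the identity-on-objects property of $F$.
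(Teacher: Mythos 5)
Your proposal is correct, and it takes essentially the same route as the paper: the paper's entire proof is your ``alternative'' one-liner---compose $\tilde{s}\colon\qq\to\smf(\mob{\cat{C}},\cat{C})$ with the strict, identity-on-objects prop functor $F$ and invoke \cref{thm.supply_v2}---while your direct verification of conditions (i)--(iii) of \cref{def.supply} is just that argument unwound. The one point you flag as delicate (strict precomposition leaves strongators unchanged) is handled correctly.
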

\begin{proof}
Given a strong monoidal functor $s\colon\qq\to\smf(\mob{\cat{C}},\cat{C})$, we compose it with $F$ (which is strict and sends $1\mapsto 1$) to get the required supply of $\pp$; see \cref{thm.supply_v2}.
\end{proof}

\begin{example}
The prop $\dd$ for self-duals was given in \cref{ex.self_duals} and that for frobenius monoids was given in \cref{ex.frob_mon}; it is $\cospan$. There is a prop functor $\dd\to\cospan$ sending the generators\;
$
\begin{tikzpicture}[WD, font=\tiny, light gray nodes, bb port sep=.3]
	\node[bb={2}{0}] (a) {};
	\draw (a_in1) -- +(-.5, 0);
	\draw (a_in2) -- +(-.5, 0);
\end{tikzpicture}
$
\;and\;
$
\begin{tikzpicture}[WD, font=\tiny, light gray nodes, bb port sep=.3]
	\node[bb={0}{2}, right=2 of a] (b) {};
	\draw (b_out1) -- +(.5, 0);
	\draw (b_out2) -- +(.5, 0);
\end{tikzpicture}
$\;
to the cospans $2\to 1\from 0$ and $0\to 1\from 2$. It is easy to check that the equations \cref{eqn.self_duals} hold in $\cospan$, i.e.\ the composites
\[
\begin{tikzpicture}[x=.3cm, y=.4cm]
  \begin{scope}
  	[every node/.style={circle, fill=black, inner sep=1pt}]
  	\node (a) {};
  	\node[below right=1.25 and 1 of a] (b1) {};
  	\node[right=1 of b1] (b2) {};
  	\node at (a-|b2) (c1) {};
  	\node[right=1 of c1] (c2) {};
  	\node[right=1 of c2] (c3) {};
  	\node at (c3|-b1) (d1) {};
  	\node[right=1 of d1] (d2) {};	
  	\node[right=1] at (d2|-a) (e) {};
  \end{scope}
	\node[inner sep=2.5pt, draw, fit=(a)] {};
	\node[inner sep=2.5pt, draw, fit=(b1) (b2)] {};
	\node[inner sep=2.5pt, draw, fit=(c1) (c3)] {};
	\node[inner sep=2.5pt, draw, fit=(d1) (d2)] {};
	\node[inner sep=2.5pt, draw, fit=(e)] {};
	\draw (a) -- (b1);
	\draw (c1) -- (b1);
	\draw (c2) -- (b2);
	\draw (c3) -- (b2);
	\draw (c1) -- (d1);
	\draw (c2) -- (d1);
	\draw (c3) -- (d2);
	\draw (e) -- (d2);
  \begin{scope}
  	[every node/.style={circle, fill=black, inner sep=1pt}]
  	\node[right=2cm of e] (a) {};
  	\node[below right=1.25 and 1 of a] (b1) {};
  	\node[right=1 of b1] (b2) {};
  	\node at (a-|b2) (c1) {};
  	\node[right=1 of c1] (c2) {};
  	\node[right=1 of c2] (c3) {};
  	\node at (c3|-b1) (d1) {};
  	\node[right=1 of d1] (d2) {};	
  	\node[right=1] at (d2|-a) (e) {};
  \end{scope}
	\node[inner sep=2.5pt, draw, fit=(a)] {};
	\node[inner sep=2.5pt, draw, fit=(b1) (b2)] {};
	\node[inner sep=2.5pt, draw, fit=(c1) (c3)] {};
	\node[inner sep=2.5pt, draw, fit=(d1) (d2)] {};
	\node[inner sep=2.5pt, draw, fit=(e)] {};
	\draw (a) -- (b1);
	\draw (c1) -- (b1);
	\draw (c2) -- (b1);
	\draw (c3) -- (b2);
	\draw (c1) -- (d1);
	\draw (c2) -- (d2);
	\draw (c3) -- (d2);
	\draw (e) -- (d2);
\end{tikzpicture}
\]
are both the identity cospan $1=1=1$. Thus by \cref{cor.change_of_supply}, every hypergraph category is a self-dual compact closed category.
\end{example}

Recall from \cref{thm.smc_biprod} that the 2-category of symmetric monoidal categories has biproducts.

\begin{proposition}
If $\cat{C}$ and $\cat{D}$ both supply $\pp$ then so does their biproduct $\cat{C}\oplus\cat{D}$.
\end{proposition}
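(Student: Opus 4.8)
The plan is to construct the supply on $\cat{C}\oplus\cat{D}$ componentwise and verify the three conditions of \cref{def.supply} one component at a time. The enabling structural fact is that, by \cref{thm.smc_all_prod_coprod,thm.smc_biprod}, finite coproducts coincide with finite products in $\ssmc$ and products are strict, so $\cat{C}\oplus\cat{D}$ may be taken to have pairs $(c,d)$ as objects, pairs $(f,g)$ as morphisms, and \emph{componentwise} monoidal structure $(c,d)\otimes(c',d')=(c\otimes c',\,d\otimes d')$ with unit $(I_{\cat{C}},I_{\cat{D}})$. In particular every coherence isomorphism of $\cat{C}\oplus\cat{D}$ is a pair of coherence isomorphisms, tensor powers decompose as $(c,d)\tpow{m}=(c\tpow{m},d\tpow{m})$, and a diagram in $\cat{C}\oplus\cat{D}$ commutes if and only if both of its component diagrams commute. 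Moreover, since the biproduct is the product, a strong monoidal functor into it is precisely a pair of strong monoidal functors.

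Writing $s^{\cat{C}}$ and $s^{\cat{D}}$ for the given supplies, I would therefore define, for each object $(c,d)$, the strong monoidal functor $s_{(c,d)}\colon\pp\to\cat{C}\oplus\cat{D}$ to be the pair $(s^{\cat{C}}_c,\,s^{\cat{D}}_d)$, so that on a morphism $\mu\colon m\to n$ it sends $\mu\mapsto(s^{\cat{C}}_c(\mu),\,s^{\cat{D}}_d(\mu))$. Conditions (i) and (ii) then reduce to the corresponding conditions for $s^{\cat{C}}$ and $s^{\cat{D}}$: on objects $s_{(c,d)}(m)=(c\tpow{m},d\tpow{m})=(c,d)\tpow{m}$, and the strongator of $s_{(c,d)}$ is the pair of strongators of $s^{\cat{C}}_c$ and $s^{\cat{D}}_d$, each of which is the unique coherence isomorphism by hypothesis, hence so is the pair. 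For condition (iii), both squares of \cref{eqn.supply_commute_tensors} for $s_{(c,d)}$ split as a pair of squares — one in $\cat{C}$ and one in $\cat{D}$ — each of which is an instance of \cref{eqn.supply_commute_tensors} for the respective given supply, and so commutes. This establishes that $s$ is a supply of $\pp$ in $\cat{C}\oplus\cat{D}$.

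The only genuine obstacle is the structural identification in the first paragraph; once the biproduct is known to be the strict product equipped with componentwise monoidal structure, every clause of \cref{def.supply} splits into a $\cat{C}$-part and a $\cat{D}$-part and holds by assumption, and no nontrivial coherence computation is needed. I would note that the same construction can be packaged more abstractly through the compact reformulation of supply: presenting the two supplies via \cref{thm.supply_v2} as strong monoidal functors $\tilde{s}^{\cat{C}}\colon\pp\to\smf(\mob{\cat{C}},\cat{C})$ and $\tilde{s}^{\cat{D}}\colon\pp\to\smf(\mob{\cat{D}},\cat{D})$, pairing them, and composing with the strict monoidal functor
\[
  \oplus\colon \smf(\mob{\cat{C}},\cat{C})\times\smf(\mob{\cat{D}},\cat{D})\too \smf(\mob{\cat{C}}\oplus\mob{\cat{D}},\,\cat{C}\oplus\cat{D})
\]
of \cref{prop.biprod_smf_strict}. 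Here one uses that $\mob{\cat{C}\oplus\cat{D}}\ss\mob{\cat{C}}\oplus\mob{\cat{D}}$ (coherence isomorphisms of the biproduct being componentwise) to restrict the target, after which strictness of $\oplus$ immediately yields conditions (i) and (ii) of \cref{thm.supply_v2}. I would present the direct componentwise argument as the main proof, since it sidesteps this subtlety about $\mob{\cat{C}\oplus\cat{D}}$, and mention the packaged version only as a remark.
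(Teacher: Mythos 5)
Your proof is correct. The paper's own proof is exactly the ``packaged'' argument you relegate to a closing remark: it is a one-liner noting that $\mob{(\cat{C}\oplus\cat{D})}=\mob{\cat{C}}\oplus\mob{\cat{D}}$ and then invoking \cref{thm.supply_v2} together with \cref{prop.biprod_smf_strict}. Your main argument instead unwinds \cref{def.supply} directly and checks conditions (i)--(iii) componentwise, using that the biproduct is the product category with pointwise monoidal structure, so that every tensor power, coherence isomorphism, and commuting square splits into a $\cat{C}$-part and a $\cat{D}$-part; each part then holds by hypothesis. The trade-off is the usual one: your direct check is longer but self-contained, while the packaged proof is shorter but leans on the identification of $\mob{(\cat{C}\oplus\cat{D})}$ with $\mob{\cat{C}}\oplus\mob{\cat{D}}$ --- a point you are right to treat with some care, since what is immediate is only the inclusion $\mob{(\cat{C}\oplus\cat{D})}\ss\mob{\cat{C}}\oplus\mob{\cat{D}}$ (the coherence isomorphisms of the biproduct are pairs of coherence isomorphisms, but it is not obvious that every pair of canonical isomorphisms arises from one of the biproduct). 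As you observe, the inclusion suffices: one restricts along $\smf(\inc,\cat{C}\oplus\cat{D})$, and conditions (i) and (ii) of \cref{thm.supply_v2} follow from the strictness of $\oplus$. Either version is acceptable; yours is the more elementary, and your flagged subtlety is a genuine one that the paper glosses over.
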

\begin{proof}
Noting that $\mob{(\cat{C}\oplus \cat{D})} =\mob{\cat{C}} \oplus \mob{\cat{D}}$, this follows from \cref{prop.biprod_smf_strict,thm.supply_v2}.
\end{proof}

We next prove that supplies transfer along strict monoidal, essentially surjective functors. Note that this assumes the axiom of choice, i.e.\ that fully faithful essentially surjective functors have inverses.

\begin{proposition}\label{cor.strong_bo}
Suppose $F\colon\cat{C}\to\cat{D}$ is a strict symmetric monoidal, essentially surjective functor. If $\cat{C}$ supplies $\pp$ then so does $\cat{D}$.
\end{proposition}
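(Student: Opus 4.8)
The plan is to define the supply on $\cat{D}$ by \emph{transport of structure} along $F$. Using the axiom of choice, for every object $d\in\cat{D}$ I would select an object $c_d\in\cat{C}$ together with an isomorphism $\theta_d\colon F(c_d)\iso d$; this is possible since $F$ is essentially surjective. Because $F$ is strict we have $F(c_d\tpow{m})=F(c_d)\tpow{m}$, so the composite strong monoidal functor $F\cp s_{c_d}\colon\pp\to\cat{D}$ sends $m\mapsto F(c_d)\tpow{m}$. Conjugating by $\theta_d$ then yields a candidate supply functor $t_d\colon\pp\to\cat{D}$ with $t_d(m)=d\tpow{m}$, where $t_d(\mu)$ is $(\theta_d\tpow{m})^{-1}\cp F(s_{c_d}(\mu))\cp\theta_d\tpow{n}$ for each $\mu\colon m\to n$. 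I would then verify the three conditions of \cref{def.supply}.

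Conditions (i) and (ii) are immediate. Condition (i) holds by construction. For (ii), the strongator of $F\cp s_{c_d}$ is the $F$-image of the strongator of $s_{c_d}$, which is a coherence isomorphism by condition (ii) for $s$; since $F$ is strict it sends coherence isomorphisms to coherence isomorphisms, and conjugating by $\theta_d$ and invoking the naturality of Mac Lane's canonical isomorphisms identifies the resulting strongator with the unique coherence isomorphism $d\tpow{m}\otimes d\tpow{n}\to d\tpow{(m+n)}$.

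The real content is condition (iii). Expanding the left square of \cref{eqn.supply_commute_tensors} for $t$, and repeatedly using the naturality of the symmetry isomorphisms $\sigma$ together with condition (iii) for $s$ pushed through the strict functor $F$, the square collapses to the statement that \emph{the transported morphism does not depend on the chosen witness of $d\otimes d'$}: the two witnesses $(c_d\otimes c_{d'},\,\theta_d\otimes\theta_{d'})$ and $(c_{d\otimes d'},\,\theta_{d\otimes d'})$ of the object $d\otimes d'$ must induce the same $t_{d\otimes d'}(\mu)$. Equivalently, the comparison isomorphism relating them must be an $s$-homomorphism in the sense of \cref{eqn.nat_means_homo}. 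This is \emph{not} automatic for arbitrary isomorphisms — a non-orthogonal change of basis fails to be a homomorphism for the self-dual supply on $\Cat{Mat}$ of \cref{ex.mat} — so the witnesses cannot be chosen arbitrarily, and this is the main obstacle.

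To overcome it I would factor $F$ through its bijective-on-objects/fully-faithful factorization: let $\cat{E}$ have $\ob\cat{E}=\ob\cat{C}$ and $\cat{E}(c,c')=\cat{D}(Fc,Fc')$, giving a strict identity-on-objects functor $F_1\colon\cat{C}\to\cat{E}$ and a strict fully faithful functor $F_2\colon\cat{E}\to\cat{D}$; since $F$ is essentially surjective, $F_2$ is an equivalence. Transfer along $F_1$ needs no choices: setting $s^{\cat{E}}_c\coloneqq F_1\cp s_c$, condition (iii) in $\cat{E}$ is exactly condition (iii) in $\cat{C}$ with $F_1$ applied, so the witness-independence problem disappears because each object is its own witness. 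For the equivalence $F_2$ I would choose a strong monoidal quasi-inverse $K$, which provides a \emph{functorial and monoidal} system of witnesses; condition (iii) for the resulting supply on $\cat{D}$ then reduces precisely to the claim that the strongators of $K$ are $s^{\cat{E}}$-homomorphisms. I expect this last point to be the crux: it is exactly the compatibility that the monoidal coherence of the equivalence furnishes, and it should be verifiable from the naturality of the (co)unit of the equivalence, condition (iii) in $\cat{E}$, and the fact that homomorphisms form a monoidal subcategory containing all coherence isomorphisms (\cref{thm.coherence_isos_homos,thm.homos_form_subcat}).
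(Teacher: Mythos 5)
Your diagnosis of where the difficulty lives is exactly right, and the bijective-on-objects leg of your factorization is handled correctly: transferring $s$ along the identity-on-objects strict functor $F_1\colon\cat{C}\to\cat{E}$ requires no choices, and conditions (i)--(iii) for $s^{\cat{E}}$ are just the $F_1$-images of those for $s$. The gap is the final step, which you flag but do not prove: that a strong monoidal quasi-inverse $K$ of $F_2$ can be taken to have strongators $\kappa_{d,d'}\colon K(d)\otimes K(d')\to K(d\otimes d')$ that are $s^{\cat{E}}$-homomorphisms. None of the ingredients you invoke for this---naturality of the (co)unit, the monoidal coherence axioms for $(K,\kappa)$, condition (iii) in $\cat{E}$, and \cref{thm.coherence_isos_homos,thm.homos_form_subcat}---uses the strictness of $F_2$. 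If they sufficed, the identical argument would transfer a supply along an arbitrary strong symmetric monoidal equivalence, which is false: see the remark following this proposition, citing \cite[Example 2.20]{fong2019hypergraph}. Moreover your own reduction is an equivalence, not an implication: unwinding \cref{eqn.supply_commute_tensors} for the transported $t_d(\mu)=(\theta_d\tpow{m})\inv\cp F(s_{c_d}(\mu))\cp\theta_d\tpow{n}$ shows that condition (iii) for $t$ holds \emph{iff} each $\kappa_{d,d'}$ is an $s^{\cat{E}}$-homomorphism, so the unproved claim is a restatement of the proposition for the equivalence $F_2$ rather than a lemma feeding into it. The monoidality axioms constrain $\kappa$ only relative to the monoidal structures, not relative to the supply---exactly the phenomenon your $\Cat{Mat}$ example (\cref{ex.mat}) illustrates.

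Strictness has to enter somewhere, and the paper's proof locates it precisely: a \emph{strict} monoidal functor restricts to a functor $\mob{F}\colon\mob{\cat{C}}\to\mob{\cat{D}}$ between the subcategories of coherence isomorphisms (\cref{def.mob}), and this restriction is fully faithful---a fact that fails for merely strong monoidal functors. Since $F$ is essentially surjective, $\mob{F}$ is an equivalence, hence so is precomposition $\smf(\mob{F},\cat{D})\colon\smf(\mob{\cat{D}},\cat{D})\to\smf(\mob{\cat{C}},\cat{D})$; packaging the supply as a functor $\pp\to\smf(\mob{\cat{C}},\cat{C})$ via \cref{thm.supply_v2}, pushing forward along $\smf(\mob{\cat{C}},F)$, and pulling back through this equivalence produces $t$. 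The reason this evades your obstruction is that the objects of $\smf(\mob{\cat{D}},\cat{D})$ are by definition monoidal functors natural with respect to all coherence data of $\cat{D}$, so the choices made in inverting the equivalence are made at the level of whole functors $\mob{\cat{D}}\to\cat{D}$ rather than witness-by-witness, and the compatibility you need is built into the codomain. To complete your route you would need to prove that $K$ and $\kappa$ can be chosen $s^{\cat{E}}$-homomorphically, and the only available lever is the strictness of $F_2$; the $\mob{(-)}$ construction is one way to make that lever precise.
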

\begin{proof}
Any strict symmetric monoidal functor $F\colon\cat{C}\to\cat{D}$ induces a strict symmetric monoidal functor $\mob{F}\colon\mob{\cat{C}}\to\mob{\cat{D}}$, in fact a fully faithful functor, commuting with the inclusions. (Note that this requires \emph{strictness}; it does not in general hold for strong monoidal functors.) 

Since in this case $F$ is essentially surjective, the symmetric monoidal functor $\mob{F}$ is an equivalence, thus so is $\smf(\mob{F},\cat{D})\colon\smf(\mob{\cat{D}},\cat{D})\to\smf(\mob{\cat{C}},\cat{D})$. Now given a supply $s$ as in the following diagram (see \cref{thm.supply_v2}), one simply defines $t$ using the inverse of the equivalence $\smf(\mob{F},\cat{D})$:
\[
\begin{tikzcd}[column sep=50pt,baseline=(W.base)]
	\pp\ar[r, "s"]\ar[d, dashed, "t"']&
	\smf(\mob{\cat{C}},\cat{C})\ar[d, "{\smf(\mob{\cat{C}},F)}"]\\
	\smf(\mob{\cat{D}},\cat{D})\ar[r, "\cong"']& |[alias=W]|
	\smf(\mob{\cat{C}},\cat{D})
\end{tikzcd}
\qedhere
\]
\end{proof}

\begin{example}
Using \cref{cor.strong_bo}, a supply of comonoids on $\rel$ can be obtained from the one on $\smset$ via the bijective-on-objects (hence strict) monoidal inclusion $\smset\to\rel$. The supply homomorphisms are precisely the functional relations \cite{fong2019regular}.
\end{example}

\begin{remark}
  Supplies do not in general transfer along equivalences of categories. For an example, see \cite[Example 2.20]{fong2019hypergraph}, which gives an equivalence of symmetric monoidal categories together with a hypergraph structure on one that cannot be transferred to the other.
\end{remark}

The failure of supplies to in general transfer along equivalences notwithstanding, we close this section by proving that if $\cat{C}$ supplies $\pp$, then so does its Mac Lane strictification.

\begin{lemma}\label{lemma.strictification}
Let $\cat{C}$ a symmetric monoidal category, $\strict{\cat{C}}$ its strictification, and $\bigotimes\colon\strict{\cat{C}}\to\cat{C}$ the strong monoidal equivalence. For any prop $\pp$, a strong monoidal functor $F\colon\pp\to\cat{C}$ factors as $\pp\To{\strict{F}}\strict{\cat{C}}\To{\bigotimes}\cat{C}$, for some strict monoidal functor $\strict{F}$ iff
\begin{enumerate}[label=(\roman*)]
	\item $F(m)=F(1)\tpow{m}$ for all $m\in\nn$, and
	\item the strongator $F(m)\otimes F(n)\to F(m+n)$ is the unique coherence map for all $m,n \in \nn$.
\end{enumerate}
\end{lemma}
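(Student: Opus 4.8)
The plan is to make the Mac Lane strictification explicit and then match structure directly in each direction. Recall that $\strict{\cat{C}}$ has as objects the finite lists $\underline{x}=(x_1,\dots,x_k)$ of objects of $\cat{C}$, with $\strict{\cat{C}}(\underline{x},\underline{y})\coloneqq\cat{C}(\bigotimes\underline{x},\bigotimes\underline{y})$ where $\bigotimes$ uses the canonical left-bracketing of \cref{eqn.big_mon_prod}; composition and identities are inherited from $\cat{C}$, the monoidal product is concatenation of lists (so strictly associative and unital), and the equivalence $\bigotimes\colon\strict{\cat{C}}\to\cat{C}$ sends a list to its iterated tensor. Two features do all the work: $\bigotimes$ is the identity on hom-sets (a morphism of $\strict{\cat{C}}$ literally \emph{is} a morphism of $\cat{C}$), and its strongators $\bigotimes\underline{x}\otimes\bigotimes\underline{y}\to\bigotimes(\underline{x}\cdot\underline{y})$ are the canonical coherence isomorphisms. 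Consequently the monoidal product of two morphisms in $\strict{\cat{C}}$ is their $\cat{C}$-tensor conjugated by these coherence isomorphisms.

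For the backward direction I assume (i) and (ii) and build $\strict{F}$ by hand. Set $\strict{F}(1)\coloneqq(F(1))$, the length-one list; strictness then forces $\strict{F}(m)=(F(1))^{\cdot m}$, whence $\bigotimes\strict{F}(m)=F(1)\tpow{m}=F(m)$, the last equality being exactly (i). On a morphism $\mu\colon m\to n$ set $\strict{F}(\mu)\coloneqq F(\mu)$, which is legitimate since $\strict{\cat{C}}(\strict{F}(m),\strict{F}(n))=\cat{C}(F(m),F(n))$. Functoriality and the equation $\strict{F}\cp\bigotimes=F$ are then immediate, since both the composition in $\strict{\cat{C}}$ and the action of $\bigotimes$ on morphisms are inherited verbatim from $\cat{C}$. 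The crux is strict monoidality of $\strict{F}$ on morphisms, and this is exactly where (ii) enters: $\strict{F}(\mu)\otimes_{\strict{\cat{C}}}\strict{F}(\nu)$ is $F(\mu)\otimes_{\cat{C}}F(\nu)$ conjugated by the coherence strongators of $\bigotimes$, while $\strict{F}(\mu+\nu)=F(\mu+\nu)$ is the same $\cat{C}$-tensor conjugated by the strongators of $F$; condition (ii) identifies the latter with those coherence isomorphisms, so the two coincide. The unit strongator is handled identically via $F(0)=I$ (the $m=0$ case of (i)), and compatibility with the braidings is inherited from $F$.

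For the forward direction I would run the same identifications in reverse. Given a strict $\strict{F}$ with $\strict{F}\cp\bigotimes=F$, condition (ii) falls out at once: the strongators of $F$ are the composite of the identity strongators of $\strict{F}$ with those of $\bigotimes$, namely the canonical coherence isomorphisms. For (i) one reads on objects $F(m)=\bigotimes\strict{F}(m)=\bigotimes(\strict{F}(1)^{\cdot m})$; for the canonical lift, where $\strict{F}(1)$ is the one-element list $(F(1))$, this iterated tensor is $F(1)\tpow{m}$. I expect the one genuinely load-bearing step to be the strict-monoidality check in the backward direction, since it is the only place a hypothesis is really used; everything else is bookkeeping riding on the two features of $\bigotimes$ highlighted above.
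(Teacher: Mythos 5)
Your proof is correct and takes essentially the same route as the paper's: define $\strict{F}(m)$ as the $m$-fold list on $F(1)$, transport morphisms along the identification $\strict{\cat{C}}(\strict{F}(m),\strict{F}(n))\cong\cat{C}(F(m),F(n))$ coming from full faithfulness of $\bigotimes$, and observe that (i) and (ii) are exactly what make this a strict monoidal lift. You are in fact somewhat more explicit than the paper about the one substantive check---that condition (ii) is what forces strict monoidality of $\strict{F}$ on morphisms---which the paper simply declares to be clear.
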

\begin{proof}
Clearly if $F$ factors as $\strict{F}\cp\bigotimes$ then it satisfies the two conditions. Conversely, if $F$ satisfies the two conditions, define $\strict{F}$ on objects by $\strict{F}(m)\coloneqq[F(1),\Ldots{m},F(1)]$; note that $\strict{F}(m)\cp\bigotimes=F(m)$. On morphisms define $\strict{F}$ to be the composite
\[\pp(m,n)\To{F}\cat{C}(F(m),F(n))\To{\cong}\strict{\cat{C}}(\strict{F}(m),\strict{F}(n)),\]
where the second map is the isomorphism coming from the fact that $\bigotimes$ is fully faithful. It is clear both that $\strict{F}$ is strict and that its composite with $\bigotimes$ is $F$.
\end{proof}

\begin{proposition}\label{thm.supply_strictification}
For any supply on $\cat{C}$, there is an induced supply on its strictification $\strict{\cat{C}}$.
\end{proposition}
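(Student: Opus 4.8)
The plan is to transport the supply $s$ from $\cat{C}$ to $\strict{\cat{C}}$ along the strong monoidal equivalence $\bigotimes\colon\strict{\cat{C}}\to\cat{C}$, exploiting that $\bigotimes$ is fully faithful. Since $\strict{\cat{C}}$ is strict, by \cref{rem.strict} it suffices to produce, for each object $C$ of $\strict{\cat{C}}$, a strict monoidal functor $t_C\colon\pp\to\strict{\cat{C}}$ with $t_C(1)=C$, and then to verify condition (iii) of \cref{def.supply}; these data assemble into a strict monoidal functor $\pp\to\smf(\mob{\strict{\cat{C}}},\strict{\cat{C}})$, which is a supply by \cref{cor.strict_supply_v2}.

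First I would define each $t_C$. On objects set $t_C(m)\coloneqq C\tpow{m}$, which is forced by strictness. For a morphism $\mu\colon m\to n$ in $\pp$, observe that $\bigotimes(C\tpow{m})$ is canonically isomorphic in $\cat{C}$ to $(\bigotimes C)\tpow{m}$, since both are monoidal products of the very same ordered family of objects of $\cat{C}$. I can therefore form the composite $(\bigotimes C)\tpow{m}\To{s_{\bigotimes C}(\mu)}(\bigotimes C)\tpow{n}$ conjugated by these coherence isomorphisms, and transport it back through the bijection $\strict{\cat{C}}(C\tpow{m},C\tpow{n})\cong\cat{C}(\bigotimes(C\tpow{m}),\bigotimes(C\tpow{n}))$ afforded by the full faithfulness of $\bigotimes$; this defines $t_C(\mu)$. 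Functoriality of $t_C$ follows from that of $s_{\bigotimes C}$ once the conjugating coherence isomorphisms cancel, by Mac Lane coherence. For a one-element list $C=[c]$ the conjugating isomorphisms are identities and $t_C$ is exactly the strict factorization $\strict{(s_{\bigotimes C})}$ produced by \cref{lemma.strictification}. Conditions (i) and (ii) of \cref{def.supply} then hold automatically, again by \cref{rem.strict}, since each $t_C$ is strict with $t_C(1)=C$.

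The substance is condition (iii). For objects $C,D$ of $\strict{\cat{C}}$, with $C\ast D$ denoting their concatenation (the monoidal product of $\strict{\cat{C}}$) and $\sigma$ the genuine symmetry of $\strict{\cat{C}}$, I must check that the two squares of \cref{eqn.supply_commute_tensors} commute. As $\bigotimes$ is faithful, it suffices to check commutativity after applying $\bigotimes$, which converts each square into a diagram in $\cat{C}$ assembled from $s_{\bigotimes C}(\mu)$, $s_{\bigotimes D}(\mu)$, $s_{\bigotimes(C\ast D)}(\mu)$, a symmetry, and an assortment of coherence isomorphisms — those conjugating the $t$'s together with the strongators of $\bigotimes$ relating $\bigotimes(C\ast D)$ to $\bigotimes C\otimes\bigotimes D$. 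Condition (iii) for the original supply $s$ identifies $s_{\bigotimes C\otimes\bigotimes D}(\mu)$ with the $\sigma$-conjugate of $s_{\bigotimes C}(\mu)\otimes s_{\bigotimes D}(\mu)$, and Mac Lane's coherence theorem guarantees that the remaining bookkeeping isomorphisms compose as required; the unit square is handled identically using the second diagram of \cref{eqn.supply_commute_tensors} for $s$.

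The main obstacle is precisely this last verification: organizing the many coherence isomorphisms relating $\bigotimes(C\tpow{m})$, $(\bigotimes C)\tpow{m}$, and $\bigotimes(C\ast D)$, and confirming they fit together coherently. This is exactly where the special feature of Mac Lane strictification — that a strong monoidal functor satisfying conditions (i) and (ii) factors through a \emph{strict} one, as isolated in \cref{lemma.strictification} — does the work that is unavailable for a generic monoidal equivalence, consistent with the preceding remark that supplies do not transfer along equivalences in general.
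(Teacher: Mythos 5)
Your overall strategy---transport the supply along the fully faithful strong monoidal equivalence $\bigotimes\colon\strict{\cat{C}}\to\cat{C}$---is the same as the paper's, but the two constructions diverge on objects of $\strict{\cat{C}}$ that are lists of length other than one. The paper applies \cref{lemma.strictification} only to one-element lists $[c]$, obtaining strict functors $s_{[c]}$, and then defines $\strict{s}_{[c_1,\ldots,c_k]}(\mu)$ as the symmetry-conjugate of $s_{[c_1]}(\mu)\cdot\ldots\cdot s_{[c_k]}(\mu)$ formed inside $\strict{\cat{C}}$, so that condition (iii) reduces to permutation bookkeeping in a strict symmetric monoidal category. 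You instead define $t_C(\mu)$ for every list $C$ at once by transporting $s_{\bigotimes C}(\mu)$ through the hom-set bijection; this makes functoriality and conditions (i)--(ii) immediate, at the price of concentrating all of the work in condition (iii).

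At that step there is one misattributed justification you should repair. After applying $\bigotimes$, your product square involves $s_{\bigotimes(C\ast D)}(\mu)$, whereas condition (iii) for $s$ speaks of $s_{\bigotimes C\otimes\bigotimes D}(\mu)$; the objects $\bigotimes(C\ast D)$ and $\bigotimes C\otimes\bigotimes D$ are in general distinct, merely related by a canonical associativity isomorphism. Mac Lane's coherence theorem only guarantees that the canonical isomorphisms between the various bracketings agree with one another; it says nothing about how $s$ evaluated at two canonically isomorphic objects compares. The statement you actually need---that $s_{\bigotimes(C\ast D)}(\mu)$ is the conjugate of $s_{\bigotimes C\otimes\bigotimes D}(\mu)$ by the tensor powers of that canonical isomorphism---is precisely the assertion that the canonical isomorphism is an $s$-homomorphism, i.e.\ it is \cref{thm.coherence_isos_homos}, a consequence of the supply axioms rather than of coherence alone. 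With that citation inserted (the unit square needs only the right-hand diagram of \cref{eqn.supply_commute_tensors}), your argument goes through and yields the same supply as the paper's construction.
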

\begin{proof}
Let $\pp$ be a prop, and suppose $s$ is a supply of $\pp$ in $\cat{C}$. For each $c\in\cat{C}$, \cref{lemma.strictification} says that the map $s_c\colon\pp\to\cat{C}$ factors through a strict monoidal functor $s_{[c]}\colon\pp\to\strict{\cat{C}}$ sending $1\mapsto[c]$. It immediately satisfies conditions (i) and (ii) of \cref{def.supply}, and it satisfies condition (iii) because $\bigotimes\colon\strict{\cat{C}}\to\cat{C}$ is faithful.

For an arbitrary object $c=[c_1,\ldots,c_k]\in\strict{C}$, define $\strict{s}_c\colon\pp\to\strict{\cat{C}}$ on each object $m\in\nn$ by $\strict{s}_c(m)\coloneqq[c,\Ldots{m},c]$ and on each morphism $\mu$ by conjugating with the symmetries:
\[
\begin{tikzcd}[column sep=90pt]
	{[c_1,\Ldots{m},c_1] \cdot \ldots \cdot [c_k,\Ldots{m}, c_k]}
		\ar[r, "{s_{[c_1]}(\mu) \cdot \ldots \cdot s_{[c_k]}(\mu)}"]&
	{[c_1,\Ldots{n},c_1] \cdot \ldots\cdot [c_k,\Ldots{n}, c_k]}
		\ar[d, "\sigma"]\\
	{[c_1,\ldots, c_k] \cdot \Ldots{m} \cdot [c_1,\ldots,c_k]}
		\ar[u, "\sigma"]\ar[r, dashed, "\strict{s}_c(\mu)"']&
	{[c_1,\ldots, c_k] \cdot \Ldots{n} \cdot [c_1,\ldots,c_k]}
\end{tikzcd}
\]
where we have written $\cdot$ for the monoidal product in $\strict{\cat{C}}$, namely list concatenation.
With this assignment, $\strict{s}$ is easily seen to be a supply of $\pp$ in $\strict{\cat{C}}$.
\end{proof}
Once we have defined preservation of supply in \cref{sec.pres_supply}, we will see immediately that the equivalence $\strict{\cat{C}}\to\cat{C}$ is supply-preserving; see \cref{cor.strict_equiv_pres_supply}.

%======== Chapter ========%
\chapter{Preservation of supply}\label{sec.pres_supply}

In \cref{sec.pres_supply} we define preservation of supply---i.e.\ the notion of \emph{homomorphism} between categories equipped with supply---and give some basic examples. In \cref{sec.preserve_thms} we prove some useful properties of supply-preserving functors. Of these, the most important is \cref{thm.pres_supp_strongators_homo}, which says that for any strong monoidal functor preserving supply, the strongators are homomorphisms.

%==== Section ====%
\section{Definition and examples}\label{sec.pres_supply}

\begin{definition}[Preserves supply]\label{def.preserve_supply}
Let $\pp$ be a prop, $\cat{C}$ and $\cat{D}$ symmetric monoidal categories, and suppose $s$ is a supply of $\pp$ in $\cat{C}$ and $t$ is a supply of $\pp$ in $\cat{D}$. We say that a strong symmetric monoidal functor $(F,\varphi)\colon\cat{C}\to\cat{D}$ \emph{preserves the supply} if the strongators $\varphi$ provide a natural isomorphism $t_{Fc}\cong (s_c\cp F)$ of functors $\pp\to\cat{D}$ for each $c\in\cat{C}$.
\end{definition}

Unpacking, a strong monoidal functor $(F,\varphi)$ preserves the supply iff the diagram
\begin{equation}\label{eqn.unpack_preserve_supply}
	\begin{tikzcd}[column sep=large]
  	F(c)\tpow{m}\ar[r, "t_{F(c)}(\mu)"]\ar[d, "\varphi"', "\cong"]&
  	F(c)\tpow{n}\ar[d, "\varphi", "\cong"']\\
  	F(c\tpow{m})\ar[r, "F(s_c(\mu))"']&
  	F(c\tpow{n})
  \end{tikzcd}
\end{equation}
commutes for each morphism $\mu\colon m\to n$ in $\pp$ and object $c\in\cat{C}$.

\begin{example}
Taking $\pp=\bij$ as in \cref{ex.supply_ids} we see that every strong monoidal functor $\cat{C}\to\cat{D}$ preserves the supply of symmetries.
\end{example}

\begin{example}
Let $s$ be a supply of $\pp$ in $\cat{C}$. Recall that there is a unique supply of $\pp$ on $\zero$ by \cref{ex.terminal_supply}. It follows from the second diagram in \cref{eqn.supply_commute_tensors} that the unique monoidal functor $\zero\to\cat{C}$ preserves the $\pp$-supply (and clearly so does $\cat{C}\to\zero$).
\end{example}

\begin{example}\label{ex.preserve_involutions}
Suppose we have a supply $s$ of involutions in $\cat{C}$ and a supply $t$ of involutions in $\cat{D}$. As we saw in \cref{ex.supply_involutions} this just means that every object $x$ is equipped with an involution $i_x\colon x\cong x$. A symmetric monoidal functor $F\colon\cat{C}\to\cat{D}$ preserves the supply iff $F(i_x)=i_{F(x)}$.
\end{example}

\begin{example}
  A hypergraph functor is defined to be a strong symmetric monoidal functor between hypergraph categories that preserves the supply of frobenius monoids.
\end{example}

\begin{remark}
  Note that \cref{def.preserve_supply} permits a straightforward generalization to lax monoidal functors. We use this stronger definition as all the examples we are aware of use strong monoidal functors, and because this structure is used in the results below.
\end{remark}

%==== Section ====%
\section{Basic theory of preservation}\label{sec.preserve_thms}

\begin{theorem}\label{thm.pres_supp_strongators_homo}
Let $s$ be a supply of $\pp$ in $\cat{C}$ and let $t$ be a supply of $\pp$ in $\cat{D}$, and suppose that $(F,\varphi)\colon\cat{C}\to\cat{D}$ is a strong monoidal functor preserving supply. Then the strongators $\varphi$ are $t$-homomorphisms, i.e.\ the following diagrams commute for each morphism $\mu\colon m\to n$ in $\pp$ and objects $c,c'\in\cat{C}$:
\[
\begin{tikzcd}[column sep=50pt]
	(Fc\otimes Fc')\tpow{m}
		\ar[r, "t_{Fc\otimes Fc'}(\mu)"]\ar[d, "(\varphi_{c,c'})\tpow{m}"']&
 	(Fc\otimes Fc')\tpow{n}\ar[d, "(\varphi_{c,c'})\tpow{n}"]\\
  F(c\otimes c')\tpow{m}\ar[r, "t_{F(c\otimes c')}(\mu)"']&
  F(c\otimes c')\tpow{n}
\end{tikzcd}
\hspace{.7in}
\begin{tikzcd}[column sep=35pt]
	I\tpow{m}
		\ar[r, "t_I(\mu)"]\ar[d, "\varphi\tpow{m}"']&
 	I\tpow{n}\ar[d, "\varphi\tpow{n}"]\\
  F(I)\tpow{m}\ar[r, "t_{F(I)}(\mu)"']&
  F(I)\tpow{n}
\end{tikzcd}
\]
\end{theorem}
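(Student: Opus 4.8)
The plan is to prove the left-hand (tensor) square by a conjugation argument that closely parallels the proof of \cref{thm.coherence_isos_homos}; the unit square is entirely analogous and I indicate it at the end. Throughout I regard $Fc$ and $Fc'$ as objects of $\cat{D}$, and I abbreviate by $\varphi^{(k)}_c\colon F(c)\tpow{k}\to F(c\tpow{k})$ the iterated strongator that appears as the vertical map in \cref{eqn.unpack_preserve_supply}.

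First I would rewrite each of the two horizontal maps of the desired square as a conjugate of the single morphism $F(s_{c\otimes c'}(\mu))$. For the bottom map this is immediate from \cref{eqn.unpack_preserve_supply} applied to the object $c\otimes c'$, which gives $t_{F(c\otimes c')}(\mu)=\varphi^{(m)}_{c\otimes c'}\cp F(s_{c\otimes c'}(\mu))\cp(\varphi^{(n)}_{c\otimes c'})\inv$. For the top map I would first split it using supply condition (iii) for $t$ in $\cat{D}$ (the left diagram of \cref{eqn.supply_commute_tensors} with $(Fc,Fc')$ in place of $(c,d)$) to write $t_{Fc\otimes Fc'}(\mu)=\sigma\inv\cp(t_{Fc}(\mu)\otimes t_{Fc'}(\mu))\cp\sigma$. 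Then I would apply \cref{eqn.unpack_preserve_supply} to $c$ and to $c'$ separately to convert each $t_{Fc}(\mu)$ into $F(s_c(\mu))$, use naturality of the strongator $\varphi$ to pull the monoidal product out through $F$, and finally apply $F$ to supply condition (iii) for $s$ in $\cat{C}$ to turn $F\big(s_c(\mu)\otimes s_{c'}(\mu)\big)$ into $F(s_{c\otimes c'}(\mu))$. The outcome is $t_{Fc\otimes Fc'}(\mu)=\Theta^{(m)}\cp F(s_{c\otimes c'}(\mu))\cp(\Theta^{(n)})\inv$, where $\Theta^{(k)}\colon(Fc\otimes Fc')\tpow{k}\to F\big((c\otimes c')\tpow{k}\big)$ is the canonical composite
\[
\Theta^{(k)}=\sigma\inv\cp(\varphi^{(k)}_c\otimes\varphi^{(k)}_{c'})\cp\varphi_{c\tpow{k},\,c'\tpow{k}}\cp F(\sigma)
\]
built from strongators of $F$, a coherence isomorphism $\sigma$ of $\cat{D}$, and the $F$-image of a coherence isomorphism $\sigma$ of $\cat{C}$.

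With both horizontal maps expressed as conjugates of the same morphism $F(s_{c\otimes c'}(\mu))$, the square commutes as soon as the two conjugating families agree, i.e.\ once I establish the coherence identity
\[
(\varphi_{c,c'})\tpow{k}\cp\varphi^{(k)}_{c\otimes c'}=\Theta^{(k)}\qquad\text{for }k=m,n.
\]
Both sides are isomorphisms $(Fc\otimes Fc')\tpow{k}\to F\big((c\otimes c')\tpow{k}\big)$ assembled entirely from the strongators of $F$, the coherence isomorphisms of $\cat{C}$ and $\cat{D}$, and $F$ applied to coherence isomorphisms of $\cat{C}$.

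I expect this last identity to be the main obstacle. It does not follow from Mac Lane's coherence theorem for symmetric monoidal categories alone, since the strongators $\varphi$ are not coherence isomorphisms of either $\cat{C}$ or $\cat{D}$; instead it is an instance of coherence for strong symmetric monoidal functors. Concretely, matching the two sides requires the three strongator axioms of $F$---compatibility of $\varphi$ with associators, with unitors, and (to absorb the braidings hidden inside the $\sigma$'s) with the symmetry, this last using that $F$ is symmetric monoidal as standing in \cref{def.preserve_supply}---together with Mac Lane coherence to identify the purely structural rearrangements. I would record this as the statement that any two natural isomorphisms between the same pair of functors $\cat{C}\times\cat{C}\to\cat{D}$ that are built from $\otimes$, $I$, $F$, the coherence data of $\cat{C}$ and $\cat{D}$, and the strongators of $F$ must coincide, and then read off the displayed identity. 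The unit square is the same argument run with the second diagram of \cref{eqn.supply_commute_tensors} in place of the first: there $t_I(\mu)$ and $s_I(\mu)$ are themselves coherence isomorphisms, so after the preserve-supply rewrite the problem again reduces to a functor-coherence identity, now for the unit strongator $\varphi\colon I\to F(I)$.
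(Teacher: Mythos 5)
Your proposal is correct and follows essentially the same route as the paper: the paper's proof is a single large diagram chase whose interior cells are exactly your ingredients---the supply condition for $t$ in $\cat{D}$, the preserve-supply squares for $c$, $c'$, and $c\otimes c'$, monoidality of $F$, and $F$ applied to the supply condition for $s$---with your coherence identity $(\varphi_{c,c'})\tpow{k}\cp\varphi^{(k)}_{c\otimes c'}=\Theta^{(k)}$ appearing as the two unlabeled side cells, which the paper dispatches with the remark that ``symmetries commute with strongators.'' Your identification of that functor-coherence identity as the genuine crux (requiring the symmetry axiom for $\varphi$, not just Mac Lane coherence in $\cat{C}$ or $\cat{D}$) is accurate and, if anything, more explicit than the paper's treatment.
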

\begin{proof}
Each of these is proved by a diagram chase. Indeed, consider the diagram:
\[
\begin{tikzcd}[column sep=6pt]
	(Fc\otimes Fc')\tpow{m}
		\ar[rrrr, "t_{Fc\otimes Fc'}(\mu)"]
		\ar[ddddd, "\varphi\tpow{m}"']
		\ar[from=dr, "\sigma"]&&[18pt]
		\ar[d, phantom, near start,  "\textsc{\tiny (t)}"]&[18pt]&
	(Fc\otimes Fc')\tpow{n}
		\ar[ddddd, "\varphi\tpow{n}"]
		\ar[from=dl, "\sigma"']\\&
	(Fc)\tpow{m}\otimes(Fc')\tpow{m}
		\ar[d, "\varphi"']
		\ar[rr, "t_{Fc}(\mu)\otimes t_{Fc'}(\mu)"]&
		\ar[d, phantom, near start, "\textsc{\tiny (Fp)}"]&
	(Fc)\tpow{n}\otimes(Fc')\tpow{n}
		\ar[d, "\varphi"]\\&
	F(c\tpow{m})\otimes F(c'\,\tpow{m})
		\ar[rr, "Fs_c(\mu)\otimes Fs_{c'}(\mu)"]
		\ar[d, "\varphi"']&
		\ar[d, phantom, near start, "\textsc{\tiny (Fm)}"]&
	F(c\tpow{n})\otimes F(c'\,\tpow{n})
		\ar[d, "\varphi"]\\&
	F(c\tpow{m}\otimes c'\,\tpow{m})
		\ar[rr, "F(s_c(\mu)\otimes s_{c'}(\mu))"]&
		\ar[d, phantom, near start, "\textsc{\tiny (s)}"]&
	F(c\tpow{n}\otimes c'\,\tpow{n})\\&
	F((c\otimes c')\tpow{m})
		\ar[u, "\sigma"]
		\ar[rr, "F(s_{c\otimes c'}(\mu))"]&
		\ar[d, phantom, near start, "\textsc{\tiny (Fp)}"]&
	F((c\otimes c')\tpow{n})
		\ar[u, "\sigma"']\\
	F(c\otimes c')\tpow{m}
		\ar[rrrr, "t_{F(c\otimes c')}(\mu)"]
		\ar[ur, "\varphi"]&&~&&
	F(c\otimes c')\tpow{n}
		\ar[ul, "\varphi"']
\end{tikzcd}
\]
Every vertical or diagonal morphism is an isomorphism, and the (unlabeled) side diagrams commute because symmetries commute with strongators. Diagrams \textsc{(s)} and \textsc{(t)} commute because $s$ and $t$ are supplies (see \cref{eqn.supply_commute_tensors}); diagrams \textsc{(Fp)} commute because $F$ preserves the supply (see \cref{eqn.unpack_preserve_supply}); and \textsc{(Fm)} commutes because $F$ is monoidal.

The proof for the unit is similar, except that squares \textsc{(Fp)} are not present.
\end{proof}

Recall from \cref{thm.homos_form_subcat} that for any supply $s$ in $\cat{C}$ there is a symmetric monoidal subcategory $\cat{C}_s\ss\cat{C}$ of $s$-homomorphisms.

\begin{proposition}\label{prop.preserve_homs}
If $F$ preserves supply, it sends homomorphisms to homomorphisms, i.e.\ it restricts to a strong monoidal functor $F_{s,t}\colon\cat{C}_s\to\cat{D}_t$.
\end{proposition}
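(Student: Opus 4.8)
The plan is to prove the proposition in two stages: first show that $F$ carries $s$-homomorphisms to $t$-homomorphisms, and then check that the resulting restriction is strong monoidal. For the first stage, fix an $s$-homomorphism $f\colon c\to d$ and a morphism $\mu\colon m\to n$ in $\pp$. The goal is to show that the square asserting that $F(f)$ is a $t$-homomorphism,
\[
\begin{tikzcd}
	F(c)\tpow{m}\ar[r, "t_{F(c)}(\mu)"]\ar[d, "F(f)\tpow{m}"']&
	F(c)\tpow{n}\ar[d, "F(f)\tpow{n}"]\\
	F(d)\tpow{m}\ar[r, "t_{F(d)}(\mu)"']&
	F(d)\tpow{n}
\end{tikzcd}
\]
commutes. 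I would exhibit this square as the front face of a cube whose depth direction is given by the (iterated) strongators $\varphi\colon F(x)\tpow{k}\To{\cong}F(x\tpow{k})$.

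The back face of the cube is the image under $F$ of the $s$-homomorphism square \cref{eqn.nat_means_homo} for $f$, and commutes because $F$ is a functor. The top and bottom faces are the preservation-of-supply squares \cref{eqn.unpack_preserve_supply} for the objects $c$ and $d$, respectively. The two remaining (left and right) faces are the naturality squares of the iterated strongator evaluated at $f\tpow{m}$ and $f\tpow{n}$, asserting $\varphi\cp F(f\tpow{k})=F(f)\tpow{k}\cp\varphi$ for $k=m,n$. Since every depth edge $\varphi$ is an isomorphism and the five side faces commute, the front face commutes. Concretely, to equate the two composites $F(c)\tpow{m}\to F(d)\tpow{n}$ one postcomposes with the isomorphism $\varphi\colon F(d)\tpow{n}\To{\cong}F(d\tpow{n})$ and chases: naturality of $\varphi$ at $n$, then preservation of supply at $c$, then functoriality of $F$ applied to the $s$-homomorphism square, then preservation of supply at $d$ together with naturality of $\varphi$ at $m$, bring the two sides into agreement.

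The step requiring the most care is the naturality of the iterated strongator, namely that $\varphi\colon F(-)\tpow{k}\To{\cong}F\big((-)\tpow{k}\big)$ is natural in its argument, so that the left and right faces commute. This is not one of the defining strongator equations but is assembled from them: it follows by induction on $k$ from the naturality of the binary strongator $\varphi_{a,b}$ and the bifunctoriality of $\otimes$, with the canonical bracketing of $x\tpow{k}$ handled by Mac Lane coherence. Everything else is a routine diagram chase.

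Once the first stage is done, the second is brief. By \cref{thm.homos_form_subcat} the subcategories $\cat{C}_s$ and $\cat{D}_t$ contain all objects, so $F$ restricts to a functor $F_{s,t}\colon\cat{C}_s\to\cat{D}_t$ agreeing with $F$ on objects and morphisms. To promote this to a strong monoidal functor it remains only to observe that the strongators of $F$ are morphisms of $\cat{D}_t$, i.e.\ that they are $t$-homomorphisms; but this is precisely the content of \cref{thm.pres_supp_strongators_homo}. Hence $F_{s,t}$ inherits the strong monoidal structure of $F$, as required.
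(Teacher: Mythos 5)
Your proof is correct and is essentially the paper's own argument: the paper draws the same five commuting faces (the image under $F$ of the $s$-homomorphism square, the two preservation-of-supply squares for $c$ and $d$, and the two naturality squares for the iterated strongator) as nested squares rather than a cube, and cancels the isomorphisms $\varphi$ in the same way. Your closing observation that the restriction is strong monoidal because the strongators are $t$-homomorphisms by \cref{thm.pres_supp_strongators_homo} is a reasonable way to discharge a point the paper leaves implicit.
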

\begin{proof}
Choose $\mu\colon m\to n$ in $\pp$ and $f\colon c\to d$ in $\cat{C}$, and consider the diagram below:
\[
\begin{tikzcd}[column sep=6pt]
	(Fc)\tpow{m}
		\ar[rrrr, "t_{Fc}(\mu)"]
		\ar[dddd, "(Ff)\tpow{m}"']
		\ar[dr, "\varphi"]
		&
    &[23pt] 
    \ar[d, phantom, near start,  "\textsc{\tiny (Fp)}"]
		&[23pt]
    &
  (Fc)\tpow{n}
		\ar[dddd, "(Ff)\tpow{n}"]
		\ar[dl, "\varphi"']
		\\
		&
	F(c\tpow{m})
		\ar[dd, "F(f\tpow{m})"]
    \ar[rr, "F(s_{c}(\mu))"]
    &
    ~
    &
	F(c\tpow{n})
    \ar[dd, "F(f\tpow{n})"']
    \\
    \ar[r, phantom,"\textsc{\tiny (Fm)}"]
    &~& 
    \textsc{\tiny (s)}
    &~&
    \ar[l, phantom,"\textsc{\tiny (Fm)}"]
		\\
		&
	F(d\tpow{m})
		\ar[rr, "F(s_d(\mu))"']
    \ar[dl, "\varphi"']
    &
    \ar[d, phantom, pos=.6, "\textsc{\tiny (Fp)}"]
    &
	F(d\tpow{n})
    \ar[dr, "\varphi"]
    \\
	(Fd)\tpow{m}
		\ar[rrrr, "t_{Fd}(\mu)"']
		&&~&&
	(Fd)\tpow{n}
\end{tikzcd}
\]
The diagrams \textsc{(Fp)} commute because $F$ preserves supply, while the diagrams \textsc{(Fm)} commute because $F$ is monoidal. Thus whenever $f$ is an $s$-homomorphism, the functoriality of $F$ implies \textsc{(s)} commutes, and hence that $F(f)$ is a $t$-homomorphism.
\end{proof}

%\begin{proposition}\label{prop.easy_pres_supply}
%Let $s$ and $t$ be supplies of $\pp$ in $\cat{C}$ and $\cat{D}$ respectively, and suppose that $F\colon\cat{C}\to\cat{D}$ sends $s$-homomorphisms to $t$-homomorphisms, i.e.\ $F$ factors through $F_{s,t}\colon\cat{C}_s\to\cat{D}_t$. Then $F$ preserves the supply iff the strongators $\varphi_c\colon F(c)\tpow{m}\to F(c\tpow{m})$ define a natural isomorphism:
%\begin{equation}\label{eqn.pres_supply}
%\begin{tikzcd}[column sep=55pt]
%	\pp\ar[r, "s"]\ar[d, "t"']&
%	\smf(\cat{C}_s,\cat{C})\ar[d, "{\smf(\cat{C}_s,F)}"]\\
%	\smf(\cat{D}_t,\cat{D})\ar[r, "{\smf(F_{s,t},\cat{D})}"']&
%	\smf(\cat{C}_s,\cat{D})\ar[ul, phantom, "\overset{\varphi}{\cong}"]
%\end{tikzcd}
%\end{equation}
%\end{proposition}
%\begin{proof}
%Consider an object $m\in\pp$. Along the top-right, it is sent to the functor $c\mapsto F(c\tpow{m})$, and along the left-bottom, it is sent to the functor $c\mapsto F(c)\tpow{m}$. The strongators for $F$ provide the component isomorphisms $\varphi_c\colon F(c)\tpow{m}\to F(c\tpow{m})$ natural in $c\in\cat{C}$ (and hence in $c\in\cat{C}_s$). For these $\varphi$ to be natural in $\pp$ works out to be exactly the condition that  \cref{eqn.unpack_preserve_supply} commutes for any morphism $\mu\colon m\to n$ in $\pp$.
%\end{proof}

\begin{proposition}\label{prop.easy_pres_supply}
Let $s$ and $t$ be supplies of $\pp$ in $\cat{C}$ and $\cat{D}$ respectively. Then $F\colon\cat{C}\to\cat{D}$ preserves the supply iff
\begin{itemize}
	\item $F$ sends coherence maps to $t$-homomorphisms, i.e.\ it factors through some $F_{0,t}\colon\cat{C}_0\to\cat{D}_t$
	\item the strongators $\varphi_c\colon F(c)\tpow{m}\to F(c\tpow{m})$ define a natural isomorphism:
\begin{equation}\label{eqn.pres_supply}
\begin{tikzcd}[column sep=55pt]
	\pp\ar[r, "s"]\ar[d, "t"']&
	\smf(\cat{C}_0,\cat{C})\ar[d, "{\smf(\cat{C}_0,F)}"]\\
	\smf(\cat{D}_t,\cat{D})\ar[r, "{\smf(F_{0,t},\cat{D})}"']&
	\smf(\cat{C}_0,\cat{D})\ar[ul, phantom, "\overset{\varphi}{\cong}"]
\end{tikzcd}
\end{equation}
\end{itemize}
\end{proposition}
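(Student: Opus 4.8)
The plan is to read \cref{eqn.pres_supply} as a pointwise reformulation of \cref{def.preserve_supply} through the correspondence of \cref{thm.supply_v2}, and to verify that the two bulleted conditions together assert exactly that \cref{eqn.unpack_preserve_supply} commutes for every $\mu\colon m\to n$ in $\pp$ and every $c\in\cat{C}$. The argument is thus essentially a careful translation rather than a new coherence computation.

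First I would compute the two legs of the square on objects of $\pp$. The lower-left composite $\smf(F_{0,t},\cat{D})\cp t$ sends $m\in\pp$ to the functor $\cat{C}_0\to\cat{D}$ given by $c\mapsto F(c)\tpow{m}$, obtained by precomposing $t(m)$ with $F_{0,t}$; the upper-right composite $\smf(\cat{C}_0,F)\cp s$ sends $m$ to $c\mapsto F(c\tpow{m})$, obtained by postcomposing $s(m)$ with $F$. A $2$-cell between these two functors $\pp\to\smf(\cat{C}_0,\cat{D})$ is a family of morphisms $\varphi_m$ of $\smf(\cat{C}_0,\cat{D})$, i.e.\ monoidal natural transformations with components $F(c)\tpow{m}\to F(c\tpow{m})$; these are precisely the ($m$-fold) strongators of $F$, so the $\varphi$ named in the statement is the only candidate filling the square.

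For the forward direction I would assume $F$ preserves the supply. Condition~1 is then immediate: every coherence map is an $s$-homomorphism by \cref{thm.coherence_isos_homos}, and \cref{prop.preserve_homs} shows $F$ carries $s$-homomorphisms to $t$-homomorphisms, so $F$ restricts to $F_{0,t}\colon\cat{C}_0\to\cat{D}_t$ and the square is well-posed. For condition~2, each $\varphi_m$ is natural in $c\in\cat{C}_0$ because the strongators of $F$ are natural, and it is monoidal by the coherence axioms for $(F,\varphi)$; hence $\varphi_m$ is a genuine morphism of $\smf(\cat{C}_0,\cat{D})$. Evaluated at $c$, the naturality of $\varphi$ in $\mu$---the content of the filled square---is literally the square \cref{eqn.unpack_preserve_supply}, which holds by hypothesis. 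Conversely, if both conditions hold, condition~1 makes $F_{0,t}$ and hence the square meaningful, and reading off naturality of the filling isomorphism $\varphi$ at each $\mu\colon m\to n$, objectwise in $c$, recovers exactly \cref{eqn.unpack_preserve_supply}, which is the definition of preserving supply.

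The only genuine work is the bookkeeping of the middle paragraph: confirming that the iterated strongators, restricted to the coherence subcategory $\cat{C}_0$ and with codomain corrected to $\cat{D}_t$ via condition~1, assemble into a well-defined monoidal natural transformation, and that $2$-cell naturality at the level of $\smf(\cat{C}_0,\cat{D})$ coincides objectwise with the squares \cref{eqn.unpack_preserve_supply}. I expect this identification to be the main (if mild) obstacle, since it is purely a matter of matching indices and variances; no coherence argument is needed beyond what \cref{thm.coherence_isos_homos,prop.preserve_homs} already provide, and once the dictionary between the two descriptions is fixed the equivalence is a direct translation.
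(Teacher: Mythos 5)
Your proposal is correct and follows essentially the same route as the paper's proof: identify the two legs of the square as $c\mapsto F(c)\tpow{m}$ and $c\mapsto F(c\tpow{m})$, obtain $F_{0,t}$ from \cref{prop.preserve_homs}, and observe that naturality of the strongator-filled square in $\mu$ is exactly \cref{eqn.unpack_preserve_supply}. The only difference is that you spell out the bookkeeping in more detail than the paper does.
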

\begin{proof}
By \cref{prop.preserve_homs}, if $F$ preserves the supply then it sends coherence maps (in fact all $s$-homomorphisms) to $t$-homomorphisms. So we may assume we have $F_{0,t}$ and prove that $F$ preserves the supply iff diagram \eqref{eqn.pres_supply} commutes.

Consider an object $m\in\pp$. Along the top-right, it is sent to the functor $c\mapsto F(c\tpow{m})$, and along the left-bottom, it is sent to the functor $c\mapsto F(c)\tpow{m}$. The strongators for $F$ provide the component isomorphisms $\varphi_c\colon F(c)\tpow{m}\to F(c\tpow{m})$ natural in $c\in\cat{C}$ (and hence in $c\in\cat{C}_0$). For these $\varphi$ to be natural in $\pp$ works out to be exactly the condition that  \cref{eqn.unpack_preserve_supply} commutes for any morphism $\mu\colon m\to n$ in $\pp$.
\end{proof}

\cref{cor.strict_equiv_pres_supply} follows easily from the proof of \cref{thm.supply_strictification}.

\begin{corollary}\label{cor.strict_equiv_pres_supply}
Let $s$ be a supply of $\pp$ in $\cat{C}$ and let $\strict{s}$ be the induced supply of $\pp$ in the strictification $\strict{\cat{C}}$. Then the equivalence $\bigotimes\colon\strict{\cat{C}}\to\cat{C}$ preserves the supply.
\end{corollary}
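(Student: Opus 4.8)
The plan is to verify the preservation condition of \cref{def.preserve_supply} directly, in its unpacked form \cref{eqn.unpack_preserve_supply}, for the strong monoidal equivalence $\bigotimes\colon\strict{\cat{C}}\to\cat{C}$. Here $\strict{\cat{C}}$ carries the induced supply $\strict{s}$ and $\cat{C}$ carries the original supply $s$, so concretely I must show that for every object $c\in\strict{\cat{C}}$ and every $\mu\colon m\to n$ in $\pp$ the square with top edge $s_{\bigotimes c}(\mu)$, bottom edge $\bigotimes(\strict{s}_c(\mu))$, and vertical strongators $\varphi$ commutes. The key input from the proof of \cref{thm.supply_strictification} is the on-the-nose factorization $\strict{s}_{[c']}\cp\bigotimes=s_{c'}$ for length-one lists, which is exactly \cref{lemma.strictification}.

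First I would treat the case $c=[c']$. Since $\bigotimes[c']=c'$ and $\bigotimes([c']\tpow{m})=(c')\tpow{m}$ agree on the nose as objects of $\cat{C}$, the relevant strongator $\varphi$ is the identity; and $\strict{s}_{[c']}\cp\bigotimes=s_{c'}$ gives $\bigotimes(\strict{s}_{[c']}(\mu))=s_{c'}(\mu)=s_{\bigotimes[c']}(\mu)$. Thus the preservation square is an equality of parallel maps with identity verticals, and so commutes. The unit $[\,]=I$ is handled the same way, using the second diagram of condition (iii) of \cref{def.supply}.

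Next I would pass to an arbitrary $c=[c_1,\ldots,c_k]$, which is the $k$-fold monoidal product (list concatenation) of the $[c_i]$. Both $\strict{s}$ and $s$ satisfy the tensor-compatibility diagrams \cref{eqn.supply_commute_tensors}, so, iterating these, $\strict{s}_c(\mu)$ and $s_{\bigotimes c}(\mu)$ are expressed as conjugates, by symmetry isomorphisms $\sigma$, of the tensor products $\strict{s}_{[c_1]}(\mu)\cdot\ldots\cdot\strict{s}_{[c_k]}(\mu)$ and $s_{c_1}(\mu)\otimes\cdots\otimes s_{c_k}(\mu)$ respectively; for $\strict{s}$ this conjugation formula is precisely its definition from the proof of \cref{thm.supply_strictification}. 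Applying $\bigotimes$, which carries $\strict{\cat{C}}$-symmetries to $\cat{C}$-symmetries and satisfies $\bigotimes\circ\strict{s}_{[c_i]}=s_{c_i}$ by the length-one case, converts the first conjugation formula into the second up to the strongators $\varphi$. Assembling these facts into a single diagram---in the style of the chase proving \cref{thm.pres_supp_strongators_homo}, whose inner cells are now instances of \cref{eqn.supply_commute_tensors} for $s$ and $\strict{s}$, the monoidality of $\bigotimes$, and the length-one case---yields the preservation square for $c$.

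The only genuine work is the coherence bookkeeping: one must confirm that the symmetry isomorphisms $\sigma$ in $\strict{\cat{C}}$ and $\cat{C}$ together with the strongators $\varphi$ of $\bigotimes$ compose as asserted. This is where the main obstacle lies, but it dissolves under Mac Lane's coherence theorem, since every map involved is a canonical isomorphism between the same source and target and hence uniquely determined. Granting this, the preservation square commutes for all objects, so $\bigotimes$ preserves the supply.
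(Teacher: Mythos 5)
Your argument is correct and is precisely the intended one: the paper offers no separate proof, remarking only that the corollary ``follows easily from the proof of \cref{thm.supply_strictification},'' and your write-up spells out exactly that---the on-the-nose factorization $\strict{s}_{[c']}\cp\bigotimes=s_{c'}$ for singleton lists, the conjugation-by-symmetries definition of $\strict{s}_c(\mu)$ for general lists, and Mac Lane coherence to reconcile the $\sigma$'s with the strongators of $\bigotimes$. No discrepancy with the paper's (implicit) route.
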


%======== Chapter ========%
\chapter{Outlook}

Many of the ideas in this paper should extend to the enriched setting, e.g.\ replacing props and symmetric monoidal categories with 2-props and symmetric monoidal 2-categories, etc. Indeed, in \cite{fong2019abelian}, we work out the theory for the locally posetal case. The results contained here, and their locally posetal generalizations, organize and significantly streamline key arguments in that paper.

We leave the development of the general enriched theory open for future work.

%============ Reference ============%

\printbibliography

\newpage
%============ Appendix ============%
\appendix

%======== Chapter ========%
\chapter{Products, coproducts, and biproducts in $\mathbb{S}\mathsf{MC}$}\label{chap.proofs}

In a category with products, we denote the pairing of $f\colon A\to B$ and $g\colon A\to C$ by $\pair{f,g}\colon A\to B\times C$. We will denote copairings by $\copair{-,-}$.

\begin{theorem*}[\ref{thm.smc_biprod}]\label{page.smc_biprod}
The 2-category $\ssmc$ of symmetric monoidal categories, strong monoidal functors, and monoidal natural transformations has 2-categorical biproducts.
\end{theorem*}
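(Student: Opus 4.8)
The plan is to realize the biproduct of symmetric monoidal categories $\cat{C}$ and $\cat{D}$ concretely as the product category $\cat{C}\times\cat{D}$ equipped with the componentwise symmetric monoidal structure: $(c,d)\otimes(c',d')\coloneqq(c\otimes c',d\otimes d')$, unit $(I_{\cat{C}},I_{\cat{D}})$, with associators, unitors, and braiding all taken componentwise. Write $\cat{C}\oplus\cat{D}$ for the result. The projections $\pi_{\cat{C}},\pi_{\cat{D}}$ are then strict monoidal, while the coprojections $\iota_{\cat{C}}\colon c\mapsto(c,I_{\cat{D}})$ and $\iota_{\cat{D}}\colon d\mapsto(I_{\cat{C}},d)$ are merely strong, their strongators being supplied by the unitors of $\cat{D}$ and $\cat{C}$ (this is exactly why coprojections cannot be strict). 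The zero object is $\zero$ from \cref{ex.terminal_supply}, which is both initial and terminal. I would then verify in order that $(\cat{C}\oplus\cat{D},\pi_{\cat{C}},\pi_{\cat{D}})$ is a 2-product, that $(\cat{C}\oplus\cat{D},\iota_{\cat{C}},\iota_{\cat{D}})$ is a 2-coproduct, and that these two structures satisfy the biproduct compatibility relations through $\zero$.

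The 2-product is immediate and strict: the pairing of strong monoidal $F\colon\cat{E}\to\cat{C}$ and $G\colon\cat{E}\to\cat{D}$ is $\pair{F,G}\colon e\mapsto(Fe,Ge)$ with componentwise strongators, and this assignment is a bijection on objects and on monoidal natural transformations, yielding an isomorphism of categories $\smf(\cat{E},\cat{C}\oplus\cat{D})\iso\smf(\cat{E},\cat{C})\times\smf(\cat{E},\cat{D})$.

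The 2-coproduct is the heart of the argument, and here the copairing is essentially forced. Given strong monoidal $(F,\varphi^F)\colon\cat{C}\to\cat{E}$ and $(G,\varphi^G)\colon\cat{D}\to\cat{E}$, any strong monoidal $H\colon\cat{C}\oplus\cat{D}\to\cat{E}$ restricting along the coprojections to $F$ and $G$ must satisfy $H(c,d)=H\big((c,I)\otimes(I,d)\big)\iso H(c,I)\otimes H(I,d)\iso Fc\otimes Gd$ via its own strongator, which already delivers uniqueness up to monoidal isomorphism. For existence I would set $\copair{F,G}(c,d)\coloneqq Fc\otimes Gd$ and equip it with the strongator $(Fc\otimes Gd)\otimes(Fc'\otimes Gd')\To{\sigma}(Fc\otimes Fc')\otimes(Gd\otimes Gd')\To{\varphi^F\otimes\varphi^G}F(c\otimes c')\otimes G(d\otimes d')$, where $\sigma$ is the canonical symmetry interchanging $Gd$ and $Fc'$ — this is the one place the braiding of $\cat{E}$ is essential. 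One then checks $\iota_{\cat{C}}\cp\copair{F,G}\iso F$ and $\iota_{\cat{D}}\cp\copair{F,G}\iso G$ as monoidal natural isomorphisms and that $\copair{-,-}$ acts functorially on monoidal natural transformations, giving an equivalence $\smf(\cat{C}\oplus\cat{D},\cat{E})\simeq\smf(\cat{C},\cat{E})\times\smf(\cat{D},\cat{E})$. The compatibility cells are then routine: $\iota_{\cat{C}}\cp\pi_{\cat{C}}=\id_{\cat{C}}$ and $\iota_{\cat{D}}\cp\pi_{\cat{D}}=\id_{\cat{D}}$ hold on the nose, while $\iota_{\cat{C}}\cp\pi_{\cat{D}}$ and $\iota_{\cat{D}}\cp\pi_{\cat{C}}$ are constant at the monoidal units and hence factor through $\zero$.

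The step I expect to be the main obstacle is verifying that the strongator defined for $\copair{F,G}$ satisfies the associativity (hexagon) and unit coherence axioms required of a strong monoidal functor. After unfolding the definition, this reduces to an equality between two canonical isomorphisms assembled from associators, unitors, and the braiding of $\cat{E}$, composed with the coherence already assumed for $\varphi^F$ and $\varphi^G$; it therefore follows from Mac Lane's coherence theorem for symmetric monoidal categories. I would isolate this diagram as the single genuinely non-formal point, since every other verification is either componentwise (on the product side) or a unitor bookkeeping computation.
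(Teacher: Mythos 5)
Your proposal is correct and follows essentially the same route as the paper: the pointwise monoidal structure on $\cat{C}\times\cat{D}$, strict projections, coprojections $c\mapsto(c,I)$, the copairing $\copair{F,G}(c,d)=Fc\otimes Gd$ with strongator built from the braiding of the target followed by $\varphi^F\otimes\varphi^G$, and uniqueness via $(c,d)\cong(c,I)\otimes(I,d)$. The only difference is one of detail: you explicitly flag the coherence check for the copairing's strongator (resolved by Mac Lane coherence) and the compatibility cells through $\zero$, where the paper leaves both as routine verifications.
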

\begin{proof}
The terminal category $\zero\coloneqq\{*\}$ is symmetric monoidal, and it is terminal as such. It is also 2-categorically initial: for every monoidal category $(\cat{C},I,\otimes)$, the functor $I\colon\zero\to\cat{C}$ sending $*\mapsto I$ is strong monoidal and any other strong monoidal functor $\zero\to\cat{C}$ is canonically isomorphic to $I$. Thus $\cat{I}$ is a 2-categorically a zero object.

Let $\cat{C}$ and $\cat{D}$ be symmetric monoidal categories. Their product $\cat{C}\times\cat{D}$ as categories inherits a symmetric monoidal structure. Indeed, take $(I,I)$ to be the monoidal unit and $(c_1,d_1)\otimes(c_2,d_2)\coloneqq(c_1\otimes c_2,d_1\otimes d_2)$ to be the monoidal product; the associators, unitors, and braiding are given pointwise. We will denote this symmetric monoidal category by $\cat{C}\oplus\cat{D}\coloneqq\cat{C}\times\cat{D}$ and show that it is a biproduct; more precisely it is both a 2-categorical coproduct and a strict 2-categorical product.

The functor $\pair{\cat{C},I}\colon\cat{C}\to\cat{C}\oplus\cat{D}$ sending $c\mapsto (c,I)$ is clearly strong monoidal. We claim that it and $\pair{I,\cat{D}}\colon\cat{D}\to\cat{C}\oplus\cat{D}$ together form the coprojections under which $\cat{C}\oplus\cat{D}$ is a 2-categorical coproduct. Indeed, given strong monoidal functors $F\colon\cat{C}\to\cat{X}$ and $G\colon\cat{D}\to\cat{X}$, define their copairing $\copair{F,G}\colon\cat{C}\oplus\cat{D}\to\cat{X}$ by $\copair{F,G}(c,d)\coloneqq F(c)\otimes G(d)$, and similarly for morphisms. The result is strong monoidal: as strongator we take the composite
\begin{align*}
	\copair{F,G}(c_1,d_1)\otimes\copair{F,G}(c_2,d_2)&=
	F(c_1)\otimes G(d_1)\otimes F(c_2)\otimes G(d_2)\\&\cong
	F(c_1)\otimes F(c_2)\otimes G(d_1)\otimes G(d_2)\\&\cong
	\copair{F,G}\big((c_1,d_1)\otimes(c_2,d_2)\big),
\end{align*}
where the first isomorphism is the braiding in $\cat{C}$ and the second isomorphism uses the strongators from $F$ and $G$. It is straightforward to check that this satisfies the necessary axioms to be a strongator%
\hide[.]{
, e.g.\ the commutativity of the following diagram
\[
\begin{tikzcd}
	Fc_1\otimes Gd_1\otimes Fc_2\otimes Gd_2\otimes Fc_3\otimes Gd_3\ar[r, "\sigma"]\ar[d, "\sigma"']&
	Fc_1\otimes Gd_1\otimes Fc_2\otimes Fc_3\otimes Gd_2\otimes Gd_3\ar[d]\\
	Fc_1\otimes Fc_2\otimes Gc_1\otimes Gc_2\otimes Fc_3\otimes Gd_3\ar[d]&
	Fc_1\otimes Gd_1\otimes F(c_2\otimes c_3)\otimes G(d_2\otimes d_3)\ar[d, "\sigma"]\\
	F(c_1\otimes c_2)\otimes G(d_1\otimes d_2)\otimes Fc_3\otimes Gd_3\ar[d, "\sigma"']&
	Fc_1\otimes F(c_2\otimes c_3)\otimes Gd_1\otimes G(d_2\otimes d_3)\ar[d]\\
	F(c_1\otimes c_2)\otimes Fc_3\otimes G(d_1\otimes d_2)\otimes Gd_3\ar[r]&
	F(c_1\otimes c_2\otimes c_3)\otimes G(d_1\otimes d_2\otimes d_3)	
\end{tikzcd}
\]
}
It is also easy to check that the unitors provide natural isomorphisms
\begin{equation}\label{eqn.coproduct_smc}
\begin{tikzcd}[sep=large]
	\cat{C}\ar[r, "\pair{\cat{C},I}"]\ar[dr, bend right=20pt, "F"', "" name=F]&
	|[alias=CD]|\cat{C}\oplus\cat{D}\ar[d, "\copair{F,G}" description]&
	\cat{D}\ar[l, "\pair{I,\cat{D}}"']\ar[dl, bend left=20pt, "G", ""' name=G]\\&
	\cat{X}
	\ar[from=G, to=CD, phantom, near start, "\cong"]
	\ar[from=F, to=CD, phantom, near start, "\cong"]
\end{tikzcd}
\end{equation}
e.g.\ $c\otimes I\cong c$ for any $c\in\cat{C}$. The map $\copair{F,G}$ is determined (up to canonical isomorphism) by this property because every object in $\cat{C}\oplus\cat{D}$ is of the form $(c,I)\otimes(I,d)$, and similarly for morphisms. Thus we have established that $\cat{C}\oplus\cat{D}$ is a 2-categorical coproduct.

We claim it is also the (strict) product using the usual projections, e.g.\ $\pi_{\cat{C}}\colon\cat{C}\times\cat{D}\to\cat{C}$. These functors are easily seen to be strong monoidal. Given any symmetric monoidal category $\cat{X}$ and functors $F\colon\cat{X}\to\cat{C}$ and $G\colon\cat{X}\to\cat{D}$, we get a universal functor $\pair{F,G}\colon\cat{X}\to\cat{C}\times\cat{D}$; we need to see that if $F$ and $G$ are strong monoidal then so is $\pair{F,G}$. Indeed we have
\begin{align*}
	\pair{F,G}(x_1)\otimes\pair{F,G}(x_2)&=
	\big(F(x_1),G(x_1)\big)\otimes\big(F(x_2),G(x_2)\big)\\&=
	\big(F(x_1)\otimes F(x_2),G(x_1)\otimes G(x_2)\big)\\&\cong
	\big(F(x_1\otimes x_2),G(x_1\otimes x_2)\big)\\&=
	\pair{F,G}(x_1\otimes x_2).
\end{align*}
The product universal property diagram analogous to \cref{eqn.coproduct_smc} commutes (on the nose), completing the proof that $\ssmc$ has biproducts.
\end{proof}

\begin{proposition*}[\ref{prop.biprod_smf_strict}]\label{page.biprod_smf_strict}
Let $\cat{C}_1,\cat{C}_2,\cat{D}_1,\cat{D}_2$ be symmetric monoidal categories. The functor
\begin{equation}\label{eqn.strict_smf_biprod}\oplus\colon\smf(\cat{C}_1,\cat{D}_1)\times\smf(\cat{C}_2,\cat{D}_2)\to\smf(\cat{C}_1\oplus\cat{C}_2,\cat{D}_1\oplus\cat{D}_2),
\end{equation}
given by $(F_1\oplus F_2)(c_1, c_2)\coloneqq (F_1(c_1),F_2(c_2))$, is strict monoidal.
\end{proposition*}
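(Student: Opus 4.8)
The plan is to unwind the monoidal structures on both sides and check directly that $\oplus$ preserves the unit and the tensor product \emph{on the nose}, as objects of $\smf(\cat{C}_1\oplus\cat{C}_2,\cat{D}_1\oplus\cat{D}_2)$; since a strong monoidal functor whose structure comparisons are identities is strict, this suffices. All the relevant structure is pointwise (in each $\smf$, by \cref{def.smf}) or componentwise (in $\cat{D}_1\oplus\cat{D}_2$, by the construction in \cref{thm.smc_biprod}), so the verification amounts to checking that two componentwise recipes coincide.

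First I would treat the unit. The monoidal unit of $\smf(\cat{C}_1,\cat{D}_1)\times\smf(\cat{C}_2,\cat{D}_2)$ is the pair $(U_1,U_2)$ of constant functors at $I_{\cat{D}_1}$ and $I_{\cat{D}_2}$. Applying $\oplus$ sends $(c_1,c_2)\mapsto(I_{\cat{D}_1},I_{\cat{D}_2})$, which is exactly the constant functor at the monoidal unit of $\cat{D}_1\oplus\cat{D}_2$, i.e.\ the unit of the target; thus the unit comparison is an identity. Next I would treat the product. For objects $(F_1,F_2)$ and $(G_1,G_2)$, both $\oplus\big((F_1,F_2)\otimes(G_1,G_2)\big)$ and $\oplus(F_1,F_2)\otimes\oplus(G_1,G_2)$ send $(c_1,c_2)$ to $\big(F_1(c_1)\otimes G_1(c_1),\,F_2(c_2)\otimes G_2(c_2)\big)$, so the underlying functors agree immediately; the content is that their strongators agree. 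Recall from \cref{def.smf} that the strongator of a pointwise product $H\otimes K$ is the symmetry $\sigma$ (transposing the two middle factors) followed by $\varphi^H\otimes\varphi^K$, and that the strongator of $\oplus(H_1,H_2)$ is componentwise $(\varphi^{H_1},\varphi^{H_2})$ because $\cat{D}_1\oplus\cat{D}_2$ has componentwise tensor. Tracing both composites, each reduces on the $i$-th component to $\sigma_i$ followed by $\varphi^{F_i}\otimes\varphi^{G_i}$, so they coincide; hence the product comparison is also an identity.

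The one step deserving care, and the main (if mild) obstacle, is justifying that the symmetry isomorphism $\sigma$ used in the target---which lives in $\cat{D}_1\oplus\cat{D}_2$---acts componentwise as $(\sigma_1,\sigma_2)$. This holds because, by \cref{eqn.symmetry}, $\sigma$ is a canonical Mac Lane isomorphism assembled from associators, unitors, and the braiding, and all three are defined componentwise in $\cat{C}\oplus\cat{D}$ (see the proof of \cref{thm.smc_biprod}); any composite of componentwise isomorphisms is again componentwise. With this in hand, both structure comparisons for $\oplus$ are identities, and preservation on morphisms is immediate since monoidal natural transformations and the action of $\oplus$ on them are again pointwise. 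Therefore $\oplus$ is strict monoidal.
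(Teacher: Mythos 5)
Your proof is correct and takes essentially the same route as the paper's: unwind the pointwise monoidal structure on $\smf(\cat{C}_1,\cat{D}_1)\times\smf(\cat{C}_2,\cat{D}_2)$ and the componentwise structure on $\cat{D}_1\oplus\cat{D}_2$, and check that unit and product are preserved on the nose. You are in fact slightly more thorough than the paper, which records only the equality of the underlying functors on objects and leaves implicit the comparison of strongators that you carry out via the observation that $\sigma$ in $\cat{D}_1\oplus\cat{D}_2$ acts componentwise.
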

\begin{proof}
The monoidal unit in the domain is the pair $(I,I)$ of constant functors, and it is clearly sent to the monoidal unit $(I,I)$ in the codomain. Thus $\oplus$ commutes with the monoidal unit; we need to check that it commutes with the monoidal product $\otimes$.

Suppose given $F_1, F_1'\colon\cat{C}_1\to\cat{D}_1$ and $F_2,F_2'\colon\cat{C}_2\to\cat{D}_2$. Then we have equalities
\begin{align*}
	\big((F_1\oplus F_2)\otimes(F_1'\oplus F_2')\big)(c_1,c_2)&=
	(F_1\oplus F_2)(c_1,c_2)\otimes(F_1'\oplus F_2')(c_1,c_2)\\&=
	\big(F_1(c_1),F_2(c_2)\big)\otimes\big(F_1'(c_1),F_2'(c_2)\big)\\&=
	\big(F_1(c_1)\otimes F_1'(c_1),F_2(c_2)\otimes F_2'(c_2)\big)\\&=
	\big((F_1\otimes F_1')\oplus(F_2\otimes F_2')\big)(c_1,c_2)
\end{align*}
for any $c_1\in\cat{C}_1$ and $c_2\in\cat{C}_2$. This establishes strictness, and a similar calculation implies that $\oplus$ preserves the braiding.
\end{proof}

\begin{theorem*}[\ref{thm.smc_all_prod_coprod}]\label{page.smc_all_prod_coprod}
The 2-category $\ssmc$ has all small products and coproducts, and products are strict.
\end{theorem*}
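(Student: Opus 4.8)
The plan is to construct small products as full families of objects and small coproducts as finitely supported families, in direct analogy with the product and direct sum of a family of abelian groups; the finite biproduct of \cref{thm.smc_biprod} is then recovered as the case in which the two coincide. Throughout, fix a small family $(\cat C_i)_{i\in I}$ of symmetric monoidal categories. For the product I would take the product category $\prod_{i\in I}\cat C_i$ with the pointwise symmetric monoidal structure: unit $(I_{\cat C_i})_{i\in I}$, tensor $(c_i)\otimes(d_i)\coloneqq(c_i\otimes d_i)$, and pointwise associators, unitors, and braiding. The projections $\pi_j$ are strict monoidal, and given strong monoidal functors $F_i\colon\cat X\to\cat C_i$ the pairing $\pair{F_i}$ is strong monoidal with strongators assembled componentwise, exactly as in the finite computation in the proof of \cref{thm.smc_biprod}. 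The only genuinely new point is that restriction along the $\pi_j$ gives an \emph{isomorphism} of categories $\smf(\cat X,\prod_i\cat C_i)\iso\prod_i\smf(\cat X,\cat C_i)$, not merely an equivalence, which is what makes the product strict; this is immediate, since a strong monoidal functor into the product and a monoidal natural transformation between two such are literally determined by their components.

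The coproduct is where the work lies. The candidate is the symmetric monoidal subcategory $\bigoplus_{i\in I}\cat C_i\ss\prod_{i\in I}\cat C_i$ of finitely supported families: objects $(c_i)$ with $c_i=I_{\cat C_i}$ for all but finitely many $i$, and morphisms $(f_i)$ with $f_i=\id_I$ for all but finitely many $i$. The subtlety is that this is not closed under the pointwise tensor unless $I\otimes I=I$, since off the (finite) supports the tensor takes the value $I\otimes I$. I would remove this obstruction by first reducing to the strict case: a 2-categorical coproduct is invariant under equivalence in each variable (it represents $\cat X\mapsto\prod_i\smf(\cat C_i,\cat X)$), so by Mac Lane strictification we may replace each $\cat C_i$ by the equivalent strict category $\strict{\cat C_i}$, build the coproduct there, and transport the universal property back along the equivalences $\bigotimes\colon\strict{\cat C_i}\to\cat C_i$. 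When every $\cat C_i$ is strict we have $I\otimes I=I$, so finitely supported families are closed under the pointwise tensor and $\bigoplus_i\cat C_i$ is a bona fide symmetric monoidal subcategory of the product.

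With the object in hand, I would exhibit the coprojections $\iota_j\colon\cat C_j\to\bigoplus_i\cat C_i$ sending $c$ to the family that is $c$ in coordinate $j$ and $I$ elsewhere; in the strict case these are strict monoidal. For the universal property, given strong monoidal $F_i\colon\cat C_i\to\cat X$ I would define the copairing $\copair{F_i}$ on an object $(c_i)$ of finite support $S$ by the tensor $\bigotimes_{i\in S}F_i(c_i)$, taken in a fixed total order on $I$, and analogously on morphisms; this is well defined precisely because only finitely many factors differ from $I$, and its strongators come from the $F_i$ together with the symmetry of $\cat X$, exactly as in the finite copairing. Since every object factors as the finite tensor $\bigotimes_{i\in S}\iota_i(c_i)$ of coprojection images, any strong monoidal $H$ with $H\cp\iota_i\iso F_i$ is forced up to canonical isomorphism to agree with $\copair{F_i}$, yielding essential uniqueness; unpacking this shows that restriction along the coprojections is an equivalence $\smf(\bigoplus_i\cat C_i,\cat X)\simeq\prod_i\smf(\cat C_i,\cat X)$, 2-naturally in $\cat X$.

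The main obstacle is exactly the monoidal closure of the finitely supported families, i.e.\ the failure of $I\otimes I=I$ in a general symmetric monoidal category; the cleanest way around it is the reduction to strict categories above, after which the construction runs in close parallel to the finite biproduct. A secondary point requiring care is the well-definedness and functoriality of the copairing tensor $\bigotimes_{i\in S}F_i(c_i)$ (the choice of ordering, and compatibility of the resulting strongators with the braiding of $\cat X$), but finite support reduces every such verification to Mac Lane's coherence theorem and to the already-established finite case.
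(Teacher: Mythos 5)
Your construction matches the paper's on both counts: the product is the product category with pointwise structure and strict projections, and the coproduct is the subcategory of finitely supported families, with the copairing given by a finite tensor over the support. The one genuine divergence is how you handle the failure of $I\otimes I=I$, which is the only obstruction to the finitely supported families being closed under the pointwise tensor. The paper's sketch simply redefines the tensor on that subcategory (``replace $I\otimes I$ by $I$'' off the supports), which is terse and implicitly leaves the reader to re-verify coherence for the patched structure; you instead invoke Mac Lane strictification, build the coproduct of the strict replacements $\strict{\cat{C}_i}$ (where closure is automatic), and transport the universal property along the equivalences, using that the 2-coproduct is defined by representing $\cat{X}\mapsto\prod_i\smf(\cat{C}_i,\cat{X})$ and is therefore invariant under equivalence in each variable. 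Your route is arguably the more rigorous of the two at that point, at the cost of an extra layer of indirection (and note that the coprojections out of the original $\cat{C}_i$ then require composing with quasi-inverses of the strictification equivalences, which are again strong monoidal). Your explicit observation that restriction along the projections is an isomorphism, not merely an equivalence, of hom-categories is exactly what ``products are strict'' means and is a welcome addition to the paper's one-line claim.
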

\begin{proof}[Sketch of proof]
Let $J$ be a set and $\cat{C}_\bullet\colon J\to\ssmc$ be an $J$-indexed collection of symmetric monoidal categories. Their product as categories $\prod_{j\in J}\cat{C}_j$ carries a symmetric monoidal structure given elementwise on $J$. It is easy to check that this, together with the usual projections (which are strict monoidal functors), constitutes the product of the $\cat{C}_j$ in $\ssmc$.

The coproduct $\bigsqcup_{j\in J}\cat{C}_j$ has the following set of objects, where $I_j$ is the unit in $\cat{C}_j$:
\[
\ob\bigg(\bigsqcup_{j\in J}\cat{C}_j\bigg)\coloneqq
\bigg\{
c\in\prod_{j\in J}\ob(\cat{C}_j)
\;\bigg|\;
c_j=I_j\text{ for all but finitely-many }j\in J
\bigg\}.
\]
The monoidal product is given pointwise (and then replace $I\otimes I$ by $I$). We leave to the reader to check that this, together with the obvious coprojections $\inc_j\colon\cat{C}_j\to\bigsqcup_{j\in J}\cat{C}_j$, constitutes a (2-categorical) coproduct in $\ssmc$, i.e.\ that for any symmetric monoidal category $\cat{X}$, there is an equivalence of categories
\[
  \ssmc\Big(\bigsqcup_{j\in J}\cat{C}_j,\cat{X}\Big)\simeq\prod_{j\in J}\ssmc(\cat{C}_j,\cat{X}).
\qedhere
\]
\end{proof}

\end{document}